\documentclass{article}
\usepackage[T1]{fontenc}
\usepackage[utf8]{inputenc}
\usepackage[english]{babel}
\usepackage[margin=3cm]{geometry}
\usepackage{amsmath, amssymb}
\usepackage{amsfonts}
\usepackage{dsfont}
\usepackage{float}
\usepackage{graphicx}
\usepackage{wrapfig}
\usepackage{mathtools}
\usepackage{bbm}
\usepackage{amsthm}
\usepackage{ifthen}
\usepackage{graphicx}
\usepackage{hyperref}
\usepackage[ruled,vlined]{algorithm2e}
\usepackage{xcolor}
\usepackage{mathtools}
\usepackage{empheq}
\usepackage{mathrsfs}
\usepackage{multicol}

\newtheorem{theorem}{Theorem}[section]
\newtheorem{prop}[theorem]{Proposition}
\newtheorem{definition}[theorem]{Definition}
\newtheorem{remark}{Remark}[section]
\newtheorem{lemma}[theorem]{Lemma}

\newtheorem{assumptions}{Assumptions}[section]

\newcommand{\ca}[1]{\mathcal{#1}}
\newcommand{\bb}[1]{\mathbb{#1}}
\newcommand{\p}{\mathbb{P}}

\newcommand{\comillas}[1]{``#1''}
\newcommand{\set}[1]{\left\{#1\right\}}
\newcommand{\parent}[1]{\left(#1\right)}

\newcommand{\borel}{\ca{B}(\bb{R}^d)}
\newcommand{\Rd}{\bb{R}^d}
\newcommand{\R}{\bb{R}}

\newcommand{\ind}[1]{\mathbbm{1}_{#1}}
\newcommand{\esp}[1]{\bb{E}\barras{#1}}

\newcommand{\gb}[1]{\overline{\widehat{#1}}}
\newcommand{\gbt}[2]{\overline{\widehat{#1}}_{t_{#2}}}
\newcommand{\barras}[1]{\left| #1 \right|}
\newcommand{\integral}{\int_{t_i}^{t_{i+1}}}
\newcommand{\ug}[1]{\hat{u}_{#1}}
\newcommand{\vg}[1]{\widehat{\ca{V}}_{#1}}
\newcommand{\norm}[1]{\left\lVert#1\right\rVert}
\newcommand{\xscheme}[1]{X_{t_{#1}}^{\pi}}
\newcommand{\prom}[1]{\langle #1 \rangle}

\newcommand{\bra}[2]{\langle #1, #2 \rangle}

\begin{document}

\title{The Kolmogorov Infinite Dimensional Equation in a Hilbert space Via Deep Learning Methods}
\author{Javier Castro\footnote{address: Departamento de Ingenier\'{\i}a Matem\'atica, Universidad de Chile, Casilla
170 Correo 3, Santiago, Chile. email: jcastro@dim.uchile.cl}
\footnote{J.C.'s work was partially funded by Fondecyt no. 1191412 and CMM Projects ``Apoyo a Centros de Excelencia'' ACE210010 and Fondo Basal FB210005.}}

\numberwithin{equation}{section}

\maketitle

\begin{abstract}
We consider the nonlinear Kolmogorov equation posed in a Hilbert space $H$, not necessarily of finite dimension. This model was recently studied by Cox et al. \cite{CJL} in the framework of weak convergence rates of stochastic wave models. Here, we propose a complementary approach by providing an infinite-dimensional Deep Learning method to approximate suitable solutions of this model. Based in the work by Hure, Pham and Warin \cite{DBS} concerning the finite dimensional case, and our previous work \cite{yo} dealing with L\'evy based processes, we generalize an Euler scheme and consistency results for the Forward Backward Stochastic Differential Equations to the infinite dimensional Hilbert valued case. Since our framework is general, we require the recently developed DeepOnets neural networks \cite{chen95,error-estimates-DOnets} to describe in detail the approximation procedure. Also, the framework developed by Fuhrman and Tessitore \cite{fuhrman} to fully describe the stochastic approximations will be adapted to our setting.

%In this paper we consider the numerical approximation of nonlocal integro differential parabolic equations via neural networks. These equations appear in many recent applications, including finance, biology and others, and have been recently studied in great generality starting from the work of Caffarelli and Silvestre \cite{CS}. Based in the work by Hure, Pham and Warin \cite{DBS}, we generalize their Euler scheme and consistency result for Backward Forward Stochastic Differential Equations to the nonlocal case. We rely on L\`evy processes and a new neural network approximation of the nonlocal part to overcome the lack of a suitable good approximation of the nonlocal part of the solution.

\end{abstract}
\tableofcontents

\section{Introduction}
Let $H,V $ be separable Hilbert spaces with inner products $\langle\cdot,\cdot\rangle_H$ and $\langle\cdot,\cdot\rangle_V$, and $T>0$. We consider the infinite dimensional Kolmogorov model
\begin{equation}
	\left \{
	\begin{aligned}
		\partial_t u(t,x) + \ca{L}[u](t,x) + \psi \big( t,x,u(t,x),B^* (t,x)\nabla u(t,x) \big)&=0, && (t,x)\in[0,T]\times H ,\\
		u(T,x) &= \phi(x), && x\in H.
	\end{aligned}
	\right.
	\label{eq:pde}
\end{equation}
Here $u\colon[0,T]\times H\to \R$ is the unknown of the problem, $B^*$ is the formal adjoint of a suitable mapping $B$, $\phi\colon H\to\R$ is a terminal condition and $\psi$ represents the non-linear character of the problem. $\nabla$ represents the Fréchet derivative with respect to the spatial variable $x\in H$. Finally, the operator $\ca{L}$ is defined for $f\in C^{0,2}([0,T]\times H)$ and $(t,x)\in[0,T]\times H$. The precise details on these terms are fixed below in Assumptions \ref{assumptions}.

\medskip

In the case where $H=\R^d$ equation \eqref{eq:pde} can be recast as a nonlinear parabolic model, generalizing the classical Heat equation. The mathematical theory in this case is well-known, see e.g. \cite[Section $2.3$]{Evans}. Of great importance to the present work is the well known relation between probabilities and parabolic models, A. N. Kolmogorov was the first (of many) to notice these relations in his foundational work \cite{kolmogorov}, the resulting theory  allows to prove existence, uniqueness and properties of solutions to parabolic models, known as Kolmogorov equations, by means of probabilistic ideas. These models, also known as diffusion equations, has many applications in Finance and other areas such as physics, biology, chemistry and economics. The success in applications came from the fact that these equations are describing the general phenomena of particles interacting under the influence of random forces (see e.g. \cite{diffusion-crank}). 

\medskip

Following Kolmogorov ideas, we also consider a decoupled system of stochastic partial differential equations (SPDEs) for $(X_t,Y_t,Z_t)_{t\in [0,T]}$
\begin{align}
	X_{t}&=x + \int_0^t (AX_s + F(s,X_s))ds+\int_0^t B(s,X_{s}) dW_s,
	\label{eq:fpsde}\\
	Y_{t}&=\phi(X_T)+\int_{t}^T \psi(s,X_s,Y_s,Z_s)ds-\int_{t}^T \prom{Z_s,\cdot}_{0} dW_s,
	\label{eq:bpsde}
\end{align}
where $\prom{\cdot,\cdot}_0$ is a suitable $\mathcal L$ based inner product to be defined below.  Forward Backward SPDEs (FBSPDEs) such as system \eqref{eq:fpsde}-\eqref{eq:bpsde} were first studied by Pardoux and Peng in the finite dimensional case \cite{pardoux90}, whereas Barles, Buckdahn and Pardoux \cite{pardoux} generalized it to the case where also a non continuous process is considered. For the stochastic equation posed on infinite dimensional spaces, we refer to the book  \cite{daprato} and articles \cite{albeverio,fuhrman}. 

\medskip

However, in the infinite dimensional case, \eqref{eq:pde} becomes a highly complicated model that requires sophisticated treatment and generalizations for the classical existence and regularity theories. These equations were first investigated by Yu. Daleckij \cite{yu} and L. Gross \cite{gr}. As you may see in Section \ref{sec:Functional Numerical Scheme}, to apply our method we need a {\bf strong solution} of \eqref{eq:pde} because we make use if the It\^o lemma. The existence of said solution can be seen as a strong assumption in our model, nevertheless, there are results on the existence and uniqueness of weaker types of solutions. In \cite{fuhrman}, \textit{mild solutions} of \eqref{eq:pde} are introduced. A function $u\colon[0,T]\times H\to\R$ is called a {\bf mild solution} to \eqref{eq:pde} if it satisfies $u\in C^{0,1}([0,T]\times H)$, there exists $C>0$ and $p\in\bb{N}$ such that $|\prom{\nabla u(t,x),h}_H|\le C\norm{h}_H(1+\norm{x}_H^p)$ for all $t\in[0,T]$ and $x,h\in H$ and the following weaker formulation of \eqref{eq:pde} is satisfied
\begin{align*}
	u(t,x) = -\int_t^T \bb{E}\parent{\psi(s,X_{s}^{t,x},u(s,X_{s}^{t,x}),G(s,X_{s}^{t,x})^{*}\nabla u(s,X_{s}^{t,x}))}ds + \bb{E}\phi(X_{s}^{t,x}).
\end{align*}
Where $(X_{s}^{t,x})_{s\in[t,T]}$ is the solution to the forward stochastic equation \eqref{eq:fpsde} starting with $X_{t}^{t,x}=x$. In \cite{fuhrman} the authors prove that there exists a unique mild solution to \eqref{eq:pde} which is related to the stochastic equations through $u(t,x)=Y_t^{t,x}$, where $Y^{t,x}$ is part of the solution to the backward equation in $[t,x]$ starting with $X_t^{t,x}=x$.

\medskip

The mathematics presented here is strongly inspired by the article \cite{DBS} written by Hure, Pham and Warin, where they rely on the stochastic representation of \eqref{eq:pde} (with $H=\Rd$) and the use of neural networks to approximate a solution of the PDE and its spatial gradient. Due to the importance of this work to the present article, we provide a detailed enough description of the scheme presented in \cite{DBS} and certain ideas of generalization; Consider a partition $\pi$ of $[0,T]$. By taking advantage of the relations $Y_t=u(t,X_t)$ and $Z_t = \sigma^{T}(t,X_t)\nabla u(t,X_t)$ showed in \cite{pardoux90} and the It\^o formula, mentioned authors proposed a pair of neural networks $\ca{U}_t(\cdot;\theta)$ and $\ca{Z}_t(\cdot;\theta)$ for every $t\in\pi$ such that
\begin{align*}
	\ca{U}_t(X^{\pi}_t;\theta)\approx Y_t\ \text{and}\ \ca{Z}_t(X^{\pi}_t;\theta)\approx Z_t,\ t\in\pi.
\end{align*}
Where $X^{\pi}$ is a suitable Euler approximation of the diffusion $X$ and $\theta$ represents the neural network parameters (see \cite[Section $3$]{DBS} for notation and note that matrix $\sigma$ in \cite{DBS} is a particular case of $B$). Recall that the work by Hure et al. is posed in an finite dimensional framework. Then, by imposing that the neural network representation satisfies the Ito formula with a cost incurred by the approximation, an iterative backward induction is produced such that at each time step a loss function representing the cost is minimized. This process generates optimal neural networks for every time step $t\in\pi$. The backward form of the algorithm emerges from the knowledge of the solution at the final time, also known as terminal condition. It is important to mention that Hure et al. extend this approach to treat variational inequalities. Still in finite dimension, our previous work \cite{yo} added a non-local term to the considered PDE. This modification introduces complications such as the need of a general diffusion which admits discontinuities. This type of processes are known in the literature as Lévy processes and are suitable to obtain the desire representation as in the local case (see \cite{pardoux}). Examples of non-local terms includes integrals with respect to a Levy measure $\lambda$ (see \cite{yo} for details), but only finite Levy measures are taking under consideration in the commented article, this restriction leaves out interesting operators such as fractional laplacian. Other important complication presented in \cite{yo} is the need of a third neural network to approximate the non-local term. A different approach to treat the non-locality is considered by Lukas Gonon and Christoph Schwab in \cite{lukas-gonon1, lukas-gonon2}, they prove that NNs of a particular form are able to approximate expectation of a certain type of functions defined on the space of stochastic processes with jumps, which can express particular types of PDEs solutions. In their proof, L. Gonon and C. Schwab provide dimension-explicit bounds evidencing that their scheme is free from the \emph{curse of dimensionality} (mentioned below). These articles are a generalization of the method presented previously in \cite{intro2, jentzen-nn-thoery}, for more details see the references there in.

\medskip

In a recent work by Cox, Jentzen and Lindner \cite{CJL}, the authors investigate a temporal discretization of the stochastic wave equation which is a special case of \eqref{eq:fpsde}. Furthermore, they establish weak convergence rates for the said discretization by employing the recent mild Ito formula discussed in \cite{mild-ito-jentzen}. The latter work deals with a weaker notion of stochastic processes which they define as \emph{mild stochastic processes}. These objects arise naturally from considering weaker solutions of stochastic partial differential equations (SPDE), and consistently, these solutions are known in the literature as \emph{mild solutions}, see \cite[Proposition $7.1$]{daprato} or Definition \ref{def:strong-mild} for a view of these concepts. For this type of solutions, the authors of \cite{mild-ito-jentzen} introduced a version of It\^o formula which suggests the existence of an infinite dimensional version of the Kolmogorov equation, and becomes one of our main sources of inspiration to describe a Hilbert generalization of \cite{DBS}. Recall that SPDEs have, by definition, a Hilbert or Banach space framework, and a conveniently mild It\^o formula is even defined for SPDEs posed on very general Banach spaces, see \cite{mild-ito-banach}.    

\medskip

In recent developments, finite dimensional Deep Learning (DL) has proven itself to be an efficient tool to solve nonlinear problems such as the approximation of PDEs solutions (see \cite{state-art-dl}). In particular, in high dimensions $d\gg 1$, typical methods such as finite difference or finite elements suffer from the fact that the complexity of the problem grows exponentially on $d$, problem known in the literature as \emph{curse of dimensionality}. Without being exhaustive, we present some of the current developments in this direction. First of all, Monte Carlo algorithms are an important and widely used approach to the resolution of the dimension problem. This can be done by means of the classical Feynman-Kac representation that allows us to write the solution of a linear PDE as an expected value, and then approximate the high dimensional integrals with an average over simulations of random variables. On the other hand, Multilevel Picard method (MLP) is another approach and consists on interpreting the stochastic representation of the solution to a semilinear
parabolic (or elliptic) PDE as a fixed point equation. Then, by using Picard iterations together with Monte Carlo methods for the computation of integrals, one is able to approximate the solution to the PDE, see \cite{intro1, intro2} for fundamental advances in this direction. As another option, the so-called Deep Galerkin method (DGM) is another DL approach used to solve quasilinear parabolic PDEs of the form $\ca{L}(u)=0$ plus boundary and initial conditions. The cost function in this framework is defined in an intuitive way, it consists of the differences between the approximated solution $\hat u$ evaluated at the initial time and spatial boundary, with the true initial and boundary conditions plus $ \ca{L}(\hat u)$. These quantities are captured by an $L^2$-type norm, which in high dimensions is minimized using Stochastic Gradient Descent (SGD) method. See \cite{intro3} for the development of the DGM and \cite{intro4} for an application. The article \cite{EHJ17} by E, Han and Jentzen, is considered one of the first attempts to solve this issue by means of Deep Learning (DL) techniques. In said paper, the authors proposed an algorithm for solving parabolic PDEs by reformulating the problem as a stochastic control problem. This connection also come from the Feynman-Kac representation, proving once more that stochastic representations are a key tool in the area. More recent developments in this area can be found in Han-Jentzen-E \cite{HJE} and Beck-E-Jentzen \cite{intro5}.

\medskip

Naturally, one has to distinguish between the SPDE and the infinite dimensional PDE and work each one separately. Both are highly complicated equations to solve numerically, or even to propose a proper discretization method which may or may not be implementable. Here we are only interested in working in the PDE side of the problem by assuming a relatively good numerical scheme for the stochastic side of it. The scheme presented here is, indeed, numerically implementable. Nevertheless, in this article we chose not to present numerical results, but instead to give a proof of the consistency of this algorithm. Our proof is the generalization of the one given in \cite{DBS} to the infinite dimensional case. 

\medskip

The problem to generalize neural networks to an infinite dimensional framework has been investigated in dynamical systems and PDEs. In our case, following \cite{DBS,yo} given the partition $\pi=\set{t}_{t\in\pi}$ of $[0,T]$, we want to approximate the solution $u(t,\cdot)$ to \eqref{eq:pde} and a fixed function of its gradient $\nabla u(t,\cdot)$ for $t\in\pi$, which in general are nonlinear operators from $H$ to some other separable real Hilbert space $(W,\prom{\cdot,\cdot}_W,\norm{\cdot}_W)$. Thus, we need a general Deep Learning framework which considers the approximation of operators $F\colon H\to W$ by a neural network $F^{\theta}\colon H\to W$, where $\theta$ is a finite dimensional parameter. Sandberg \cite{S91} defined a set of infinite dimensional mappings parameterized by finite dimensional parameters, providing a universal approximation theorem for those mappings. Other important article in the development of infinite dimensional neural networks and an key reference for the theory presented here, is \cite{chen95} by Chen and Chen. They deal with the  approximation of mappings defined on a compact subset of $C(K)$ with values in $\R$ and $C(K)$, where $K$ is a compact subset of a finite dimensional space. A key lemma (\cite[Lemma $7$]{chen95}) presented in there says that, for a compact set $V$ in $C(K)$, one can consider a transformation $T$ and define $T(V) = \set{Tu\colon u\in V}$ such that every function in $V$ is close to its transformation. One can compare these ideas with the approximation of measurable functions with simple functions in integral-type distance and continuous functions with polynomials in uniform norm. The transformed set is constituted by, in some sense, simpler functions that can be easily described by finite dimensional neural networks which allows them to create a proper architecture. Lemma \ref{lemma:hilbert_aris} is the counterpart of \cite[Lemma $7$]{chen95} for a compact set $V$ in a Hilbert space. Here, the considered transformation is the projection onto a finite set of an orthonormal basis. Chen and Chen also demonstrate that their architectures approximate any continuous mapping in uniform norm. More recently Lu, Jin and Karniadakis, based on \cite{chen95}, introduced an architecture called {\bf DeepONets} \cite{LJK19}, which are mappings between spaces of continuous functions. DeepONets rely on representing the input function and its evaluation on a fixed finite set of points. Then, via an activation function, one takes the finite dimensional information to an element of the set of continuous functions.\\

It is common in machine learning and, more generally, in some statistics frameworks, to consider mean square error due to its convexity properties. Here this framework emerges naturally because we make use of stochastic processes, which will be essentially square integrable random variables. The quantity used to measure the error incurred in our scheme will depend on how good our architectures are able to approximate elements of $L^2(H,\mu;W)$. Here, $\mu$ is the law of an $H$-valued random variable $X$ (this random variable will be related to a stochastic process). Then, it is natural to consider the $L^2$-distance or mean square error
\begin{align*}
	\bb{E}\norm{F(X)-F^{\theta}(X)}_{W}^2 = \int_{H} \norm{F(x)-F^{\theta}(x)}_{W}^2 \mu(dx),
\end{align*}
where $F$ is some mapping and $F^{\theta}$ the proposed architecture. The purpose of this paper is to describe solutions of the infinite dimensional Kolmogorov equation using recent Deep Learning techniques. More precisely, we will find infinite dimensional neural networks of type Deep-H-Onets (to be defined in this article) that approximate suitable solutions of \eqref{eq:pde}. This is done in our main result, Theorem \ref{MT1}. 

\medskip

\noindent
{\bf Organization of this paper.} This paper is organized as follows. In Section \ref{sec:pre} we provide notation and required background. In Section \ref{sec:the-forward-backward-stochastic-system}, a detailed description of the stochastic equations is presented. In Section \ref{sec:Functional Numerical Scheme}, we introduce the analyzed numerical scheme. In section \ref{sec:Universal Approximation Theorems and Deep-H-Onets}, we define neural networks and infinite dimensional neural networks and prove classical universal approximation results. In the last section, Section \ref{sec:Main Result}, we state and prove the main result of our work.

\medskip

\noindent
{\bf Acknowledgments.} We want to thank professor Aris Daniilidis for helping us with some deep functional analysis topics and useful discussions, see Lemma \ref{lemma:hilbert_aris}. We would also like to thank professor Julio Backhoff for his insights on the existence of a strong solution to \eqref{eq:pde}.

\section{Preliminaries}\label{sec:pre}

\subsection{Notation}\label{sec:notation} We cannot continue without introducing some notation needed to state our main result. 

\medskip

{\bf Finite dimension}. For any $m\in\bb{N}$, $\R^m$ represents the finite dimensional Euclidean space with elements $x=(x_1,...,x_m)$ endowed with the usual norm $\norm{x}_{\R^m}^2=\sum_{i=1}^m |x_i|^2$. We will simply write $\norm{x}$ when no confusion can arise. Note that for scalars $a\in\R$ we also denote its norm as $|a| = \sqrt{a^2}$. For $x,y\in\R^m$ their scalar product is denoted as $x\cdot y =\sum_{i=1}^m x_i y_i$. Finally, along this paper we will use several times that for $x_1,...,x_k\in \R$, the following bound holds,
\begin{equation}\label{eq:square-bound}
	(x_1+\cdots+x_k)^2\le k(x_1^2+\cdots+x_k^2).
\end{equation}

{\bf Banach spaces}. Consider now two real Banach spaces $E,F$. Given a subset $A\subset E$ we denote as $\prom{A}$ the set containing all the finite linear combination of elements in $A$. For a separable real Hilbert space $(H,\prom{\cdot,\cdot}_H, \norm{\cdot}_H)$, we denote by $(e_i)_{i\in\bb{N}}$ a countable orthonormal basis. We denote by $C^m(E;F)$ the set of all $m$ times continuously differentiable functions from $E$ to $F$ and $C^m(E)$ when $F=\R$. $L(E,F)$ denotes the space of continuous linear functions from $E$ to $F$ endowed with the usual operator norm, and by $L_2(H,F)$ we mean the set of Hilbert-Schmidt operators $A\in L(H,F)$ such that $\norm{A}_{L_2}^2=\sum_{k=1}^{\infty}\norm{Ae_k}^2_F<\infty,$ endowed with the corresponding norm. 

\medskip

{\bf Measures}. We also denote by $\ca{B}(E)$ the Borel $\sigma$-algebra on $E$. For a general measure space $(E,\ca{H},\nu)$ and $p\ge 1$, $L^p(E,\ca{H},\nu;F)$ represents the standard Lebesgue space of all $p$-integrable functions from $E$ to $F$, with its Borel $\sigma$-algebra, and endowed with the norm
\begin{align*}
	\norm{f}^p_{L^p(E,\ca{H},\nu;F)} = \int_{E} \norm{f(x)}_F^p\nu (dx).
\end{align*}
We write $L^p(E,\ca{H},\nu)$ when $F=\R$ and $L^p(E,\nu)$ when $F=\R$ and $\ca{H}$ is the Borel $\sigma$-algebra $\ca{B}(E)$. See the \comillas{Appendix A} section of \cite{wei} for a definition of the above Bochner integral and its properties. We also write
\[
\int_E f(s)ds = \begin{pmatrix}\int_E f_1(s) ds\\ \vdots \\ \int_E f_m(s) ds\end{pmatrix},
\]
whenever $f:E\to\R^m$ with $f=(f_1,...,f_m)$. 

\medskip

{\bf Stochastic processes}. We refer to \cite{daprato} for a detailed development of Stochastic Calculus in infinite dimensions. Here we will need the following definitions. 

\medskip

Let $(\Omega,\ca{F},\p)$ be a complete probability space. Given a $E$-valued random variable $X:\Omega\to E$, we write $\bb{E}X = \bb{E}(X)$. We denote by $\sigma(X)$ the $\sigma$-algebra generated by $X$ and by $\ca{P}_s$ the predictable $\sigma$-algebra of $[0,s]\times\Omega$. Let us denote by $\mathscr{S}^2 = \mathscr{S}^2_T(E)$ the space of $E$-valued predictable processes $(X_t)_{t\in[0,T]}$ endowed with the norm $\norm{X}_{\mathscr{S}^2}=\bb{E}\left(\underset{t\in [0,T]}{\sup} \norm{X_t}^2_E\right)$. We denote $\mathscr{M}_T^2(E)\subset\mathscr{S}^2_T(E)$ the space of $E$-valued continuous, square integrable martingales $(M_t)_{t\in[0,T]}$ such that $M_0 = 0$ endowed with the norm $\norm{M}_{\mathscr{M}^2} = \norm{M}_{\mathscr{S}^2}$. Note that if $X\in L^2(\Omega,\ca{F},\p;E)$, then $M_t=\bb{E}(X|\ca{F}_t)$ defines a martingale in $\mathscr{M}^2_T(E)$. We also have that if $M$ is a continuous martingale, then Doob's inequality holds,
\begin{align*}
	\bb{E}\left( \underset{t\in[0,T]}{\sup}\norm{M_t}_E^2 \right) \le 4\underset{t\in[0,T]}{\sup}\left(\bb{E}\norm{M_t}_E^2\right).
\end{align*}
If no confusion arises, we will drop the parentheses $(\cdot)$ in each $\bb{E}$.

\subsection{Stochastic Calculus on Hilbert Spaces}
In this Subsection we gather some necessary results needed in the proof of the main result. We first state a series of properties on Stochastic Calculus posed in Hilbert Spaces. For a detailed view, see \cite[Section $4$]{daprato}.

\medskip

 Consider the real separable Hilbert space $(V,\prom{\cdot,\cdot}_V,\norm{\cdot}_V)$ with an orthonormal basis $(f_k)_{k\in\bb{N}}$ and $Q\in L(V)$ be a trace class nonnegative operator, which means $\sum_{k=1}^{\infty}\prom{Qf_k,f_k}<\infty$. Define now 
 \[
 V_0=Q^{1/2}V=\set{Q^{1/2}v\ \big|\ v\in V},
 \] 
which is another Hilbert space endowed with $\prom{u_0,v_0}_0=\prom{Q^{-1/2}u_0,Q^{-1/2}v_0}_V$ and the corresponding norm $\norm{\cdot}_0$. Operator $Q$ will appear in the definition of the operator $\mathcal L$ in Assumptions \ref{assumptions}.

\medskip

For a Hilbert space $K$ let $L_{2}(V_0,K)$ be the set of Hilbert-Schmidt operators defined on $V_0$ and taking values in $K$.
\begin{remark}\label{remark:schmidt-dual}
	Note that $L(V,K)\hookrightarrow L_2(V_0,K)$. Also, observe that if $K=\R$, then for every $v\in L(V,\R)=V^{*}$ (up to isomorphism),
	\begin{align*}
		\norm{v}_{L_2(V,\R)}^2
		= \sum_{j=1}^{\infty} |\prom{v,f_j}|^2 = \norm{v}_V^2.
	\end{align*}
	Therefore in this particular case $L(V,\R)=L_2(V,\R)$. 
\end{remark}
Recall $Q$ as introduced before. A $V$-valued process $(W_t)_{t\ge 0}$ is called a $Q$-Wiener process if
\begin{enumerate}
	\item[$(i)$] $W_0=0$,
	\item[$(ii)$] $W$ has continuous trajectories and independent increments and,
	\item[$(iii)$] The law $\mathscr{L}(W_t - W_s)=\mathscr{N}(0,(t-s)Q)$ for $t\ge s\ge 0$, i.e. the Gaussian measure with mean $0$ and covariance operator $(t-s)Q$.
\end{enumerate}  
We shall assume

\begin{assumptions}\label{Ass2p1}
There exists a bounded sequence of nonnegative real numbers $(\lambda_k)_{k\in\bb{N}}$ such that $Q f_k = \lambda_k f_k$ for $k\in\bb{N}$. 
\end{assumptions}
Due to $Q$ been trace class, one can prove that $\text{Tr}(Q)=\sum_{k=1}^{\infty} \lambda_k < \infty$, result known as Lidskii's theorem. We provide an example of trace class operator. Consider the usual Hilbert space $H$ (could be any Hilbert space) and $x,y\in H$, define the bounded linear operator $T_{x,y}\in L(H)$ such that $T_{x,y}z=\prom{z,y}_H x$ for any $z\in H$. Then $\text{Tr}(T_{x,y}) = \prom{x,y}_H$. Furthermore, any bounded linear operator with finite-dimensional rank is trace class.

\medskip

For a $V $-valued $Q$-Wiener process $(W_t)_{t\in[0,T]}$ we have the representation \cite{daprato}
\begin{align}\label{eq:w-representation}
	W_t = \sum_{k=1}^{\infty} \sqrt{\lambda_k}\beta^k_t f_k\ \ \text{with}\ \ \beta^k_t = \frac{1}{\sqrt{\lambda_k}}\bra{W_t}{f_k}_{V},
\end{align}
where the series converges in $L^2(\Omega,\ca{F},\p;V)$ and $(\beta^j)_{j\in\bb{N}}$ is a sequence of independent real valued Brownian motions on $(\Omega,\ca{F},\p)$. For $n\in\bb{N}$ consider
\begin{align}\label{eq:w-n}
	W^n_t = \sum_{k=1}^n \sqrt{\lambda_k}\beta^k_t f_k,\ t\in [0,T].
\end{align}
\begin{definition}\label{def:integrable-processes}
	For a Hilbert space $K$ (usually $\R$ or $H$), we define the set $\mathscr{N}_W^2(0,T;L_2(V_0,K))$ of $L_2(V_0,K)$-valued predictable processes $\Phi\colon[0,T]\times\Omega\to L_2(V_0,K)$ such that
	\begin{align*}
		\norm{\Phi}^2_{\mathscr{N}_W^2(0,T;L_2(V_0,K))}=\bb{E}\int_0^T \| \Phi_s \|^2_0 ds < \infty,
	\end{align*}
	endowed with the corresponding norm, i.e. $\norm{\cdot}_{\mathscr{N}_W^2(0,T;L_2(V_0,K))}$ which we also denote as $\norm{\cdot}_{\mathscr{N}_W^2}$ when no confusion arises.
\end{definition}
Such processes are suitable for integrate with respect to $(W_t)_{t\in[0,T]}$ obtaining another stochastic process
\begin{align}\label{eq:stochastic-intregral}
	\int_0^t \Phi_s dW_s,\ t\in[0,T],
\end{align}
which is a continuous square integrable martingale. See \cite[Section $4.3$]{daprato} for properties of this integral.
\subsection{Some useful lemmas}
In this section we have compiled some basic but essential facts that will be used in the proof for introductory results to state main Theorem \ref{MT1}. Of particular importance is the \textit{Martingale Representation Theorem} \ref{lemma:mg-representation} which allows us to find a solution for the backward stochastic equation.
\begin{lemma}
	 The integral \eqref{eq:stochastic-intregral} can be approximated as follows: for $n\in\bb{N}$ consider the Wiener process $(W^n_t)_{t\in [0,T]}$ in \eqref{eq:w-n}, then
	\begin{align*}
		\bb{E}\parent{\underset{t\in [0,T]}{\sup}\norm{\int_0^t \Phi_s d W_s - \int_0^t \Phi_s d W^n_s}^2}\to 0\quad\text{as}\quad N\to\infty,
	\end{align*}
	for any $(\Phi_s)_{s\in [0,T]}\in\mathscr{N}_W([0,T];L_2(V_0,K))$.
\end{lemma}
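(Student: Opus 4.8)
The plan is to reduce the statement to a convergence estimate involving only the integrand $\Phi$, by exploiting the linearity of the stochastic integral and the Hilbert--Schmidt structure. First I would write, for each $t\in[0,T]$,
\begin{align*}
	\int_0^t \Phi_s\, dW_s - \int_0^t \Phi_s\, dW^n_s = \int_0^t \Phi_s\, d(W_s - W^n_s),
\end{align*}
and recognize that $W_s - W^n_s = \sum_{k>n}\sqrt{\lambda_k}\beta^k_s f_k$ is itself a $Q$-Wiener process on the orthogonal complement $V^{(n)} := \overline{\prom{(f_k)_{k>n}}}$, with covariance $Q^{(n)}$ the restriction of $Q$ to that subspace. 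The process $t\mapsto \int_0^t \Phi_s\, d(W_s - W^n_s)$ is then a continuous square integrable martingale, so by Doob's inequality (stated in Subsection \ref{sec:notation}) the left-hand side is bounded by $4$ times
\begin{align*}
	\underset{t\in[0,T]}{\sup}\ \bb{E}\norm{\int_0^t \Phi_s\, d(W_s - W^n_s)}^2.
\end{align*}

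Next I would apply the It\^o isometry for Hilbert-space-valued stochastic integrals (see \cite[Section $4.3$]{daprato}) to the martingale driven by $W - W^n$. The key point is that the relevant isometry norm only sees the action of $\Phi_s$ on the directions $(f_k)_{k>n}$: concretely, the isometry gives
\begin{align*}
	\bb{E}\norm{\int_0^t \Phi_s\, d(W_s - W^n_s)}^2 = \bb{E}\int_0^t \sum_{k>n} \lambda_k\, \norm{\Phi_s f_k}_K^2\, ds \le \bb{E}\int_0^T \sum_{k>n} \lambda_k\, \norm{\Phi_s f_k}_K^2\, ds,
\end{align*}
uniformly in $t$, where I have used that $\set{\sqrt{\lambda_k}^{-1}\cdot\text{(appropriate normalization)}}$ relates the $V_0$-inner product to the $\lambda_k$-weighted $V$-expansion. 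Since $\Phi\in\mathscr{N}_W^2(0,T;L_2(V_0,K))$ means exactly $\bb{E}\int_0^T \norm{\Phi_s}_0^2\, ds = \bb{E}\int_0^T \sum_{k=1}^\infty \lambda_k\norm{\Phi_s f_k}_K^2\, ds < \infty$, the tail sums $\sum_{k>n}\lambda_k\norm{\Phi_s f_k}_K^2$ decrease to $0$ pointwise in $(s,\omega)$ and are dominated by the integrable function $\norm{\Phi_s}_0^2$. The dominated convergence theorem then yields that the right-hand side tends to $0$ as $n\to\infty$, which combined with the Doob bound finishes the proof.

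The step I expect to require the most care is making the It\^o isometry identity above fully rigorous, i.e. verifying that $\int_0^t \Phi_s\, dW^n_s$ is well defined as a genuine stochastic integral against the finite-rank Wiener process $W^n$ and that the difference equals the integral against $W - W^n$ with the stated isometry. This is essentially bookkeeping with the representation \eqref{eq:w-representation}--\eqref{eq:w-n} and the definition of the Hilbert--Schmidt norm on $L_2(V_0,K)$, but one must be attentive to the $Q^{1/2}$-weighting in the norm $\norm{\cdot}_0$ versus the $\sqrt{\lambda_k}$-weighting in the series for $W_t$: the two are consistent precisely because $\norm{\Phi}_0^2 = \sum_k \norm{\Phi Q^{1/2} f_k}_K^2 = \sum_k \lambda_k\norm{\Phi f_k}_K^2$ under Assumptions \ref{Ass2p1}. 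Once this identification is in place, the convergence is an immediate application of dominated convergence, and no further estimates are needed.
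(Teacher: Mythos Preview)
Your argument is correct and is the standard route to this result. The paper itself does not supply a proof of this lemma --- it is stated without demonstration between Definition \ref{def:integrable-processes} and Lemma \ref{lemma:n-brownian-integral} --- so there is nothing to compare against. Your use of Doob's inequality followed by the It\^o isometry and dominated convergence is exactly what one would expect, and the identification $\norm{\Phi_s}_0^2 = \sum_k \norm{\Phi_s Q^{1/2}f_k}_K^2 = \sum_k \lambda_k \norm{\Phi_s f_k}_K^2$ under Assumptions \ref{Ass2p1} is the right bookkeeping (with the understanding that the $k$-th summand vanishes when $\lambda_k = 0$, so that the possibly undefined $\Phi_s f_k$ causes no issue). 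The only cosmetic point is that the lemma's display reads ``as $N\to\infty$'' while the truncation index is $n$; you have silently corrected this, which is fine.
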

\begin{lemma}\label{lemma:n-brownian-integral}
	Let $n\in\bb{N}$ and $(\Phi_s)_{s\in[0,T]}\in \mathscr{N}_W (0,T;L_2(V_0,H))$, then the following holds,
	\begin{align*}
		\int_{0}^t \Phi(s)dW_s^n = \sum_{j=1}^n \int_0^t\Phi(s)(Q^{1/2}f_j)d\beta^j
		_s.
	\end{align*}
	Where $W^n$ is given by \eqref{eq:w-n}. 
\end{lemma}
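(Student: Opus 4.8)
The plan is to reduce the stochastic integral against the finite-dimensional Wiener process $W^n$ to a finite sum of ordinary It\^o integrals against the scalar Brownian motions $\beta^j$, exploiting the explicit representation \eqref{eq:w-n} of $W^n$ together with linearity of the stochastic integral. First I would observe that, by definition, $W^n_t = \sum_{j=1}^n \sqrt{\lambda_j}\,\beta^j_t f_j$, so $W^n$ has only finitely many ``directions'' $f_1,\dots,f_n$, and each $\beta^j$ is a genuine real-valued Brownian motion. The natural strategy is to verify the identity first for elementary (simple) processes $\Phi$ of the form $\Phi(s) = \sum_k \mathbbm{1}_{(s_k,s_{k+1}]}(s)\,\Phi_k$ with $\Phi_k$ an $\ca{F}_{s_k}$-measurable $L_2(V_0,H)$-valued random variable, where both sides are finite sums and the equality is immediate from the definition of the integral of a simple process; then pass to the limit using the It\^o isometry in the norm $\norm{\cdot}_{\mathscr{N}_W^2}$ on the left and the standard one-dimensional It\^o isometries on the right.

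The key computation for simple processes is bookkeeping: writing $W^n$ as a finite linear combination, one has $\int_0^t \Phi(s)\,dW^n_s = \sum_k \Phi_k\,(W^n_{s_{k+1}\wedge t} - W^n_{s_k\wedge t}) = \sum_k \Phi_k\Big(\sum_{j=1}^n \sqrt{\lambda_j}(\beta^j_{s_{k+1}\wedge t}-\beta^j_{s_k\wedge t}) f_j\Big)$, and swapping the two finite sums gives $\sum_{j=1}^n \sum_k \Phi_k(\sqrt{\lambda_j} f_j)(\beta^j_{s_{k+1}\wedge t}-\beta^j_{s_k\wedge t}) = \sum_{j=1}^n \int_0^t \Phi(s)(\sqrt{\lambda_j}f_j)\,d\beta^j_s$. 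One then checks that $\sqrt{\lambda_j}f_j = Q^{1/2}f_j$, which holds under Assumptions \ref{Ass2p1} since $Q f_j = \lambda_j f_j$ implies $Q^{1/2}f_j = \sqrt{\lambda_j}f_j$; this is exactly the factor appearing in the claimed formula. So for simple $\Phi$ the identity holds pathwise (for every $t$, a.s.).

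To finish I would argue by density: simple processes are dense in $\mathscr{N}_W^2(0,T;L_2(V_0,H))$, so pick $\Phi^{(m)}$ simple with $\norm{\Phi^{(m)}-\Phi}_{\mathscr{N}_W^2}\to 0$. On the left, $\int_0^\cdot \Phi^{(m)}\,dW^n \to \int_0^\cdot \Phi\,dW^n$ in $\mathscr{M}^2_T(H)$ by the It\^o isometry for the $Q$-Wiener integral (the integral against $W^n$ is a special case of the integral against $W$, replacing $Q$ by its finite-rank truncation, or one notes directly that $\norm{\Phi^{(m)}-\Phi}_{\mathscr{N}_W^2}$ controls the $W^n$-integral as well since $\norm{\cdot}_0$ dominates the relevant truncated norm). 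On the right, for each fixed $j$ the map $\Phi\mapsto \Phi(\cdot)(Q^{1/2}f_j)$ sends $\mathscr{N}_W^2(0,T;L_2(V_0,H))$ into the space of $H$-valued processes integrable against $\beta^j$, and $\bb{E}\int_0^T \norm{\Phi(s)(Q^{1/2}f_j)}_H^2\,ds \le \bb{E}\int_0^T \norm{\Phi(s)}_0^2\,ds$ because $Q^{1/2}f_j/\norm{Q^{1/2}f_j}_{V_0}$ (when $\lambda_j>0$) is a unit vector of $V_0$, so the one-dimensional It\^o isometry gives convergence of each of the finitely many terms $\int_0^\cdot \Phi^{(m)}(s)(Q^{1/2}f_j)\,d\beta^j_s$. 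Passing to the limit on both sides through a subsequence that converges a.s. uniformly in $t$ yields the identity for general $\Phi$, and since both sides are a.s.\ continuous in $t$ the exceptional null set can be chosen uniformly in $t$.

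The main obstacle is the bookkeeping around the norm $\norm{\cdot}_0$: one must be careful that $\Phi\in \mathscr{N}_W^2(0,T;L_2(V_0,H))$ genuinely controls each scalar integrand $\Phi(\cdot)(Q^{1/2}f_j)$, i.e. that $\norm{\Phi(s)(Q^{1/2}f_j)}_H^2$ is bounded by $\norm{\Phi(s)}_{L_2(V_0,H)}^2 = \sum_{i}\norm{\Phi(s)(Q^{1/2}f_i)\lambda_i^{-1/2}\cdot\lambda_i^{1/2}}_H^2$ appropriately; since $\{\lambda_i^{1/2} f_i\}_{i}$ (with the convention that one drops indices where $\lambda_i=0$) is an orthonormal basis of $V_0$, the Hilbert–Schmidt norm is precisely $\sum_i \norm{\Phi(s)(\lambda_i^{1/2}f_i)}_H^2$, so each term is dominated and the density/limit argument goes through. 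Everything else is routine linearity and approximation.
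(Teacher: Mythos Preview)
Your proposal is correct and follows essentially the same approach as the paper: verify the identity first for elementary processes by direct computation using the finite expansion $W^n_t=\sum_{j=1}^n \sqrt{\lambda_j}\beta^j_t f_j$ and $Q^{1/2}f_j=\sqrt{\lambda_j}f_j$, then pass to the limit by density of elementary processes in $\mathscr{N}_W^2(0,T;L_2(V_0,H))$ together with the It\^o isometry on both sides. Your additional care in checking that $\norm{\Phi(s)(Q^{1/2}f_j)}_H^2$ is dominated by $\norm{\Phi(s)}_{L_2(V_0,H)}^2$ via the orthonormal basis $\{\lambda_i^{1/2}f_i\}$ of $V_0$ is exactly the right justification; the a.s.\ subsequence extraction you mention is not needed since convergence in $\mathscr{M}_T^2(H)$ already suffices.
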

\begin{proof}[\textbf{Proof}:]
	First, note that we have $n$ integrals of $H$-valued processes with respect to real valued standard Brownian Motions (the associated covariance operator in this case is just $1$). In our case, the space that gives sense to these integrals is $\mathscr{N}_W(0,T;L_2(\R,H))$. It is straightforward that $L_2(\R,H)=H$. We proceed by proving the property for elementary processes and conclude by taking the proper limit. For that purpose let $N\in\bb{N}$, $\set{t_i}_{i=0}^N$ be a partition of $[0,T]$ with $t_0=0$ and $t_N=T$, $\set{\Phi_i}_{i=1}^{N}\subset L(V ,H)$ and an elementary process $\Phi$ defined as
	\begin{align*}
		\Phi(s) = \sum_{i=1}^N \Phi_i \ind{[t_{i-1},t_i)} (s).
	\end{align*}
	Then by using the linearity of the operators $\Phi_i$, definition \eqref{eq:w-n} and $Q^{1/2}f_k = \lambda^{1/2}f_k$,
	\begin{align*}
		\int_{0}^t \Phi_s dW_s^n &= \sum_{i=1}^N\Phi_i(W^n_{t_{i+1}\wedge t} - W^n_{t_i\wedge t}) = \sum_{i=1}^{N} \Phi_i\parent{\sum_{k=1}^n \sqrt{\lambda_k}f_k\beta^k_{t_{i+1}\wedge t} 
		-\sum_{k=1}^n \sqrt{\lambda_k}f_k\beta^k_{t_i\wedge t} }\\
		&=\sum_{i=1}^{N} \Phi_i\parent{\sum_{k=1}^n(Q^{1/2}f_k)(\beta^k_{t_i\wedge t} - \beta^k_{t_{i-1}\wedge t})}=\sum_{k=1}^n\sum_{i=1}^N\Phi_i(Q^{1/2}f_k)(\beta^k_{t_i\wedge t} - \beta^k_{t_{i-1}\wedge t})\\
		&=\sum_{k=1}^n \int_0^t \Phi_s (Q^{1/2}f_k)d\beta^k_s.
	\end{align*}
	It is easy to see that for every $j\in\bb{N}$, $(\Phi_s(Q^{1/2}f_j))_{s\in[0,T]}$ is an elementary process in $\mathscr{N}_W(0,T;L_2(\R,H))$; therefore, the property is satisfied for those processes. Now, given a sequence of elementary processes such that $\Phi^k\to\Phi$ in $\mathscr{N}_W(0,T;L_2(V_0,H))$, we also have that for every $j\in\bb{N}$ $\Phi^k(Q^{1/2}f_j)\to\Phi(Q^{1/2}f_j)$ in $\mathscr{N}_W(0,T;L_2(\R,H))$. For any $k\in\bb{N}$ it holds that,
	\begin{align*}
		\int_0^{\cdot} \Phi^k_s dW^N_s = \sum_{j=1}^n \int_0^{\cdot}\Phi^k_s(Q^{1/2}f_j) d\beta^j_s.
	\end{align*}
	The property follows by taking limit in $\mathscr{M}_T^2(H)$ as $k\to\infty$ in both sides.
\end{proof}
\begin{theorem}[\textit{Martingale Representation Theorem}]
	\label{lemma:mg-representation}
	Let $W$ be a Hilbert space and $r,s\in[0,T]$ with $r<s$. Then, for every $X\in L^2(\Omega,\ca{F}_s,\p;W)$ there exists $(Z_t)_{t\in[r,s]}\in \mathscr{N}_W([r,s];L^0_2(V,W))$ such that
	\begin{align*}
		X = \bb{E}(X|\ca{F}_t) + \int_t^s Z_u dW_u,\ t\in[r,s].
	\end{align*} 
\end{theorem}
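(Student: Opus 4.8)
The plan is to prove the Martingale Representation Theorem for a $V$-valued $Q$-Wiener process by a two-step reduction: first establish the result for real-valued Brownian motions (the classical case, via the $\beta^j$ decomposition of $W$), then glue the scalar components back together using Lemma \ref{lemma:n-brownian-integral} and a limiting argument in $\mathscr{M}^2_T(W)$. Throughout I work on the augmented filtration $(\ca{F}_t)$ generated by $W$ (equivalently, by all the $\beta^j$); since $W$ and $(\beta^j)_{j\in\bb{N}}$ generate the same filtration, this is harmless.

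First I would treat the one-dimensional case. Fix an orthonormal basis $(g_m)_{m\in\bb{N}}$ of $W$ and write $X = \sum_m \bra{X}{g_m}_W\, g_m$, with $\bra{X}{g_m}_W \in L^2(\Omega,\ca{F}_s,\p;\R)$. The filtration is generated by the countable family of independent scalar Brownian motions $(\beta^j)_{j\in\bb{N}}$, so by the classical (infinite-sequence) martingale representation theorem there exist predictable processes $(\zeta^{m,j}_t)_{t\in[r,s]}$ with $\sum_j \bb{E}\int_r^s |\zeta^{m,j}_u|^2\,du < \infty$ such that $\bra{X}{g_m}_W = \bb{E}(\bra{X}{g_m}_W\mid\ca{F}_t) + \sum_j \int_t^s \zeta^{m,j}_u\, d\beta^j_u$. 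I then \emph{assemble} an operator-valued process: define $\widetilde Z_u \in L_2(V,W)$ (here the relevant Hilbert–Schmidt space has $Q=\mathrm{Id}$ on $V$, i.e.\ one integrates against the cylindrical Wiener process built from the $\beta^j$) by prescribing $\widetilde Z_u f_j = \sum_m \zeta^{m,j}_u\, g_m$. One checks $\widetilde Z \in \mathscr{N}_W([r,s];L_2(V,W))$ precisely because $\bb{E}\int_r^s \|\widetilde Z_u\|_{L_2}^2\,du = \sum_{m,j}\bb{E}\int_r^s |\zeta^{m,j}_u|^2\,du$, which is finite after summing the per-$m$ estimates, using Parseval in $W$ and the identity $X=\sum_m\bra{X}{g_m}_W g_m$ in $L^2$. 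Matching this against Lemma \ref{lemma:n-brownian-integral}, the $W$-valued stochastic integral $\int_t^s \widetilde Z_u\, dW_u$ (interpreted via the cylindrical representation) has $m$-th coordinate exactly $\sum_j\int_t^s\zeta^{m,j}_u\,d\beta^j_u$, so summing over $m$ (justified by convergence of the coordinate series in $\mathscr{M}^2_T(W)$, which follows from the finite Hilbert–Schmidt norm bound) yields $X = \bb{E}(X\mid\ca{F}_t) + \int_t^s \widetilde Z_u\,dW_u$.

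Finally I would pass from the cylindrical/identity-covariance integral to the genuine $Q$-Wiener integral. The statement of Theorem \ref{lemma:mg-representation} asks for $Z\in\mathscr{N}_W([r,s];L^0_2(V,W))$ integrated against $W$ itself; since $W = \sum_k\sqrt{\lambda_k}\beta^k f_k$ by \eqref{eq:w-representation}, comparing with Lemma \ref{lemma:n-brownian-integral} shows $\int_t^s Z_u\,dW_u = \sum_k\int_t^s Z_u(Q^{1/2}f_k)\,d\beta^k_u = \sum_k\sqrt{\lambda_k}\int_t^s Z_u f_k\,d\beta^k_u$. So I set $Z_u := \widetilde Z_u\circ Q^{-1/2}$ on $V_0 = Q^{1/2}V$; then $Z_u(Q^{1/2}f_k) = \widetilde Z_u f_k$, the integrals coincide, and the requirement $\bb{E}\int_r^s\|Z_u\|_0^2\,du = \bb{E}\int_r^s\|\widetilde Z_u\|_{L_2(V,W)}^2\,du < \infty$ is exactly the norm identity built into the definition of $\norm{\cdot}_0$ and $L_2(V_0,W)$. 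Uniqueness (if needed) follows from the isometry property of the stochastic integral.

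The main obstacle I anticipate is the careful bookkeeping in the assembly step: verifying that the coordinatewise representations $(\zeta^{m,j})$ can be collated into a bona fide $L_2(V_0,W)$-valued \emph{predictable} process with finite $\mathscr{N}_W$-norm, and that the double series $\sum_{m}\sum_j\int\zeta^{m,j}d\beta^j$ converges in $\mathscr{M}^2_T(W)$ to $\int \widetilde Z\,dW$ rather than merely formally. Both rely on the Hilbert–Schmidt norm being the right $\ell^2$-aggregate of the scalar $L^2$-norms, so the estimates telescope cleanly, but one must be attentive to the order of summation and to invoking the Itô isometry and Doob's inequality (available from the excerpt) at the right moments; the use of Lemma \ref{lemma:n-brownian-integral} to identify coordinates of the vector integral is the technical linchpin that makes this rigorous.
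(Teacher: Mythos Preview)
The paper does not actually prove this theorem: its entire proof is the single line ``See for instance \cite[Proposition $4.1$]{fuhrman}.'' So there is no in-paper argument to compare against; your sketch is the standard reduction---coordinate-wise classical representation against the independent scalar Brownian motions $(\beta^j)_j$, reassembly into a Hilbert--Schmidt-valued integrand, and the change of variables $Z_u=\widetilde Z_u\circ Q^{-1/2}$ to pass from the cylindrical integral to the $Q$-Wiener integral---and this is essentially what Fuhrman--Tessitore do as well.

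Your outline is sound. The key estimate you flag is indeed the one that makes the assembly work: from the scalar It\^o isometry one gets $\sum_j\bb{E}\int_r^s|\zeta^{m,j}_u|^2\,du=\bb{E}\big|\bra{X}{g_m}_W-\bb{E}(\bra{X}{g_m}_W\mid\ca{F}_r)\big|^2\le\bb{E}|\bra{X}{g_m}_W|^2$, and summing over $m$ gives $\bb{E}\int_r^s\|\widetilde Z_u\|_{L_2(V,W)}^2\,du\le\bb{E}\norm{X}_W^2<\infty$, so the Hilbert--Schmidt aggregation is finite and Fubini/Tonelli lets you exchange the sums freely. Predictability of the assembled $\widetilde Z$ follows because each matrix entry $\bra{\widetilde Z_u f_j}{g_m}_W=\zeta^{m,j}_u$ is predictable and the $L_2(V,W)$-Borel $\sigma$-algebra is generated by these coordinate evaluations. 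The identification $\norm{Z_u}_{L_2(V_0,W)}=\norm{\widetilde Z_u}_{L_2(V,W)}$ is exactly the observation that $(Q^{1/2}f_j)_j$ is an orthonormal basis of $V_0$, which is consistent with the paper's conventions. So the obstacles you anticipate are real but routine, and Lemma~\ref{lemma:n-brownian-integral} is precisely the tool that identifies the coordinate integrals with the vector integral.
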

	\begin{proof}
		See for instance \cite[Proposition $4.1$]{fuhrman}.

	\end{proof}

\section{The Forward-Backward Stochastic System}\label{sec:the-forward-backward-stochastic-system}

\subsection{Assumptions for the model}\label{sec:assumptions}

Recall the Kolmogorov model introduced in \eqref{eq:pde} and the Subsection \ref{sec:notation} (Notation) for details on the functional spaces. Along the paper we shall consider the following assumptions.
\begin{assumptions}\label{assumptions}
	There exists a constant $K>0$ such that,
	\begin{enumerate}
		\item {\bf Structure of $\ca{L}$}. The operator $\ca{L}$ is defined for $f\in C^{0,2}([0,T]\times H;\R)$ and $(t,x)\in[0,T]\times H$ as follows,
		\begin{align*}
			\ca{L}[f](t,x) = \prom{\nabla f(t,x), Ax + F(t,x)}_H + \frac{1}{2} \text{tr}\parent{\nabla^2 f(t,x)(B(t,x)Q^{1/2})(B(t,x)Q^{1/2})^*},
		\end{align*} 
		where 
		\begin{itemize}
		\item $\nabla f \in H$ is the standard gradient, and $\nabla^2 f$ is the bilinear operator second derivative;
		\item $A\colon\ca{D}(A)\subset H\to H$ is the infinitesimal generator of a $C_0$-semigroup $\set{S(t), t\ge 0}$ on $H$, with $\ca{D}(A)$ dense in $H$ and $x\in \ca{D}(A)$. 
		%\item 
		\item $F$ is a drift term and $B$ is an diffusion operator satisfying
		\[
		F\colon[0,T]\times H\to H, \qquad  B\colon[0,T]\times H\to L_2(V_0,H),
		 \] 
		 are $(\ca{B}([0,T])\otimes\ca{B}(H))$-$\ca{B}(H)$ and $(\ca{B}([0,T])\otimes \ca{B}(H))$-$\ca{B}(L_2(V_0,H))$ measurable mappings, respectively. Furthermore, they satisfy that for all $x,y\in H$ and $t\in[0,T]$,
		\begin{align*}
			\norm{F(t,x) - F(t,y)}_H + \norm{B(t,x) - B(t,y)}_{L_2(V_0,H)}\le K\norm{x-y}_H,
		\end{align*}
		and
		\begin{align*}
			\norm{F(t,x)}^2_{H} + \norm{B(t,x)}^2_{L_2(V_0,H)} \le K^2(1+\norm{x}^2_H).
		\end{align*}
		These mean that $F$ and $B$ are uniformly Lipschitz, with linear growth. 
		\smallskip
		\item For all $r,s\in[0,T]$ with $r < s$ and $y\in H$, 
		\[
		S(s-r)F(r,y)\in\ca{D}(A), \quad S(s-r)B(r,y)\in\ca{D}(A).
		\]
		And, there exists positive functions $g_1, g_2\in L^1([0,T])$ such that
		\begin{align*}
			\norm{AS(s-r)F(r,y)}_H &\le g_1(s-r)\parent{1 + \norm{y}_H},\\
			\norm{AS(s-r)B(r,y)}^2_{L_2(V_0,H)} &\le g_2(s-r)\parent{1 + \norm{y}^2_H}.
			\end{align*}
		\end{itemize}
		Note that this tells us that $F$ and $B$ are uniformly bounded in $[0,T]$ for fixed $x\in H$. We also denote as $B^*$ the adjoint operator of $B$.
		
		\item  {\bf Structure of the nonlinearity}. $\psi\colon[0,T]\times H\times\R\times V \to\R$ is the nonlinearity in \eqref{eq:pde}, which satisfies that for $t,t'\in [0,T], x,x'\in H, y,y'\in\R$ and $z,z'\in V$,
		\begin{align}\label{lip:psi}
			|\psi(t,x,y,z) - \psi(t',x',y',z')| \le C(|t-t'|^{1/2} + \norm{x-x'}_H + |y-y'| + \norm{z-z'}_V).
		\end{align}
	\end{enumerate}
\end{assumptions}

These assumptions are standard in the literature, see e.g. \cite{DBS}. In particular, condition \eqref{lip:psi} on $\psi$ is required to control our numerical scheme in a satisfactory way. As for the conditions on $\mathcal L$, these are also common in the infinite dimensional literature, as expressed for example in \cite{fuhrman}. For any $u\in V$ we have that $\norm{Q^{1/2}u}_V\le\norm{Q^{1/2}}_{L(V)}\norm{u}_V=\norm{Q^{1/2}}_{L(V)}\norm{Q^{1/2}u}_0$, which will be implicitly used during the paper.

\subsection{The forward process} Now we recall the mathematical structure associated to the forward process $(X_t)$ in \eqref{eq:fpsde}, where $A$, $B$ and $F$ were specified in Assumptions \ref{assumptions}. For further details, the reader can consult \cite{daprato}.

\begin{definition}[Strong and mild solutions]~
\label{def:strong-mild}
\begin{enumerate}
\item	A predictable $H$-valued stochastic process $(X_t)_{t\in[0,T]}$ is said to be a {\bf strong solution} of \eqref{eq:fpsde} if for all $t\in[0,T]$ $X_t\in \ca{D}(A)$ $\p$-a.e.,
	\begin{align*}
		\int_0^T \norm{AX_s}_H ds < \infty,\quad \p\text{-a.e.}
	\end{align*}
	and equation \eqref{eq:fpsde} is satisfied for all $t\in[0,T]$.

\item	A predictable $H$-valued stochastic process $(X_t)_{t\in[0,T]}$ is said to be a {\bf mild solution} of \eqref{eq:fpsde} if
	\begin{align*}
		\p\parent{\int_0^T \norm{X_s}_H^2 ds< \infty} = 1,
	\end{align*}
	and for all $t\in[0,T]$ we have the weak formulation of \eqref{eq:fpsde}:
	\begin{align}\label{eq:mild}
		X_t = S(t) x + \int_0^t S(t-s) F(s,X_s)ds + \int_0^t S(t-s)B(s,X_s)dW_s,\quad\p\text{-a.e.}
	\end{align}
\end{enumerate}	
\end{definition}

The following result gives existence of mild solutions in a very general setting.

\begin{theorem}\label{theorem:mild-existence-uiqueness}
	There exist a unique mild solution $(X_t)_{t\in[0,T]}$ to \eqref{eq:fpsde}, unique among the stochastic processes satisfying,
	\begin{align*}
		\p\parent{\int_0^T \norm{X_s}_H^2 ds< \infty} = 1.
	\end{align*}
	Moreover, $X$ possesses a continuous modification and for any $p\ge 2$ there exists a constant $C=C(p,T)>0$ such that,
	\begin{align*}
		\underset{s\in[0,T]}{\sup} \bb{E}\norm{X_s}_H^p \le C(1+\norm{x}^p_H).
	\end{align*}
	\begin{proof}
		See \cite[Theorem $7.2$]{daprato}.
	\end{proof}
\end{theorem}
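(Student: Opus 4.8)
The statement to prove is Theorem \ref{theorem:mild-existence-uiqueness}, which is essentially a standard result from Da Prato--Zabczyk (Theorem 7.2 there), so my plan follows the classical fixed-point argument adapted to the assumptions gathered above.

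\medskip

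The plan is to set up a Banach fixed-point (contraction) argument on a suitable space of processes. First I would fix $p \geq 2$ and work on the space $\mathscr{H}_p$ of $H$-valued predictable processes $Y$ on $[0,T]$ with $\sup_{t\in[0,T]} \bb{E}\norm{Y_t}_H^p < \infty$, possibly with an exponentially weighted norm $\norm{Y}_\beta^p = \sup_{t\in[0,T]} e^{-\beta t}\bb{E}\norm{Y_t}_H^p$ for $\beta>0$ large, which is a standard trick to get a genuine contraction on the whole interval rather than only on a small subinterval. Define the map $\mathscr{K}$ by
\begin{align*}
	(\mathscr{K}Y)_t = S(t)x + \int_0^t S(t-s)F(s,Y_s)\,ds + \int_0^t S(t-s)B(s,Y_s)\,dW_s.
\end{align*}
The first task is to check $\mathscr{K}$ maps $\mathscr{H}_p$ into itself: the semigroup $S$ is bounded on $[0,T]$ (by $C_0$-semigroup theory, $\norm{S(t)}_{L(H)}\le Me^{\omega t}$), the deterministic integral is handled by Jensen/Hölder, and the stochastic integral term is controlled by the infinite-dimensional Burkholder--Davis--Gundy inequality together with the linear growth bound on $B$ from Assumptions \ref{assumptions}; one uses that $S(t-s)B(s,Y_s) \in L_2(V_0,H)$ with norm controlled by $\norm{B(s,Y_s)}_{L_2(V_0,H)}\le K(1+\norm{Y_s}_H)$. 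This gives an estimate of the form $\bb{E}\norm{(\mathscr{K}Y)_t}_H^p \le C(1+\norm{x}_H^p) + C\int_0^t \bb{E}\norm{Y_s}_H^p\,ds$.

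\medskip

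Next I would prove the contraction. For $Y^1,Y^2 \in \mathscr{H}_p$, subtract and use the Lipschitz bounds on $F$ and $B$ (again from Assumptions \ref{assumptions}) plus BDG to obtain
\begin{align*}
	e^{-\beta t}\bb{E}\norm{(\mathscr{K}Y^1)_t - (\mathscr{K}Y^2)_t}_H^p \le C\int_0^t e^{-\beta(t-s)}\,ds \cdot \norm{Y^1-Y^2}_\beta^p \le \frac{C}{\beta}\norm{Y^1-Y^2}_\beta^p,
\end{align*}
so that choosing $\beta$ large makes $\mathscr{K}$ a strict contraction on $\mathscr{H}_p$; its unique fixed point is the mild solution, and uniqueness among processes with $\int_0^T\norm{X_s}_H^2\,ds<\infty$ a.s. follows by a localization/stopping-time argument reducing general such processes to the $\mathscr{H}_2$ setting. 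The moment bound $\sup_{s}\bb{E}\norm{X_s}_H^p \le C(1+\norm{x}_H^p)$ then comes from feeding the fixed point back into the self-mapping estimate above and applying Gronwall's lemma. Finally, the continuous modification follows from the factorization method (the stochastic convolution $\int_0^t S(t-s)B(s,X_s)\,dW_s$ admits a continuous version by the Da Prato--Kwapień--Zabczyk factorization argument, writing it via a fractional power $(t-s)^{\alpha-1}$ kernel and using the $L^p$ bounds just established).

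\medskip

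The main obstacle is not conceptual but technical: handling the stochastic convolution term correctly, i.e. making sure the BDG-type inequality is applied to the right Hilbert-Schmidt norms ($\norm{S(t-s)B(s,Y_s)}_{L_2(V_0,H)}$ appearing as $\norm{\cdot}_0$ in the notation of Definition \ref{def:integrable-processes}) and, for the continuity claim, carrying out the factorization method in infinite dimensions which requires a careful choice of the exponent $\alpha$ and an $L^p$ rather than $L^2$ estimate. Since this is precisely Theorem 7.2 of \cite{daprato}, I would in practice simply cite it, but the sketch above indicates the proof one would reconstruct.
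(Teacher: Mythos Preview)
Your proposal is correct and matches the paper's own treatment: the paper simply cites \cite[Theorem $7.2$]{daprato} without reproducing the argument, and the sketch you give (contraction on a weighted $\mathscr{H}_p$ space via BDG and the Lipschitz/linear-growth assumptions on $F,B$, Gronwall for the moment bound, and the factorization method for continuity) is exactly the proof that reference contains. Your closing remark that in practice one would just cite the result is precisely what the paper does.
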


Now we provide a proof of existence of strong solutions to  \eqref{eq:fpsde}, which follows closely  \cite[Theorem $2$]{albeverio}.

\begin{prop}\label{prop:strong-forward-solution}
	Assuming Assumptions \ref{assumptions} there exists a strong solution $(X_t)_{t\in[0,T]}$ to the equation \eqref{eq:fpsde} and $C=C(T)$ such that
	\begin{align}\label{eq:mild-bound}
		\underset{s\in[0,T]}{\sup}\bb{E}\norm{X_s}_H^2 \le C\quad\text{and}\quad\p\parent{\int_0^T \norm{X_s}_H^2 ds< \infty} = 1.
	\end{align}
\end{prop}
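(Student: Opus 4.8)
The plan is to obtain the strong solution as a limit of Galerkin-type approximations, or equivalently by verifying that the mild solution from Theorem \ref{theorem:mild-existence-uiqueness} is in fact strong under the additional regularity hypotheses in the last bullet of Assumptions \ref{assumptions}. Concretely, I would first invoke Theorem \ref{theorem:mild-existence-uiqueness} to get a unique mild solution $(X_t)_{t\in[0,T]}$ satisfying $X_t = S(t)x + \int_0^t S(t-s)F(s,X_s)\,ds + \int_0^t S(t-s)B(s,X_s)\,dW_s$, together with the moment bound $\sup_{s}\bb{E}\norm{X_s}_H^2 \le C(1+\norm{x}_H^2)$, which already gives the second part of \eqref{eq:mild-bound} and, a fortiori, $\p(\int_0^T\norm{X_s}_H^2\,ds<\infty)=1$. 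So the real content is to show that this $X_t$ lands in $\ca{D}(A)$ a.s., that $\int_0^T\norm{AX_s}_H\,ds<\infty$ a.s., and that $X$ then solves the original (non-mild) equation \eqref{eq:fpsde}.

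For the membership in $\ca{D}(A)$ I would examine each of the three terms in the mild formula. The term $S(t)x$ is in $\ca{D}(A)$ because $x\in\ca{D}(A)$ (assumed) and $S(t)$ commutes with $A$ on $\ca{D}(A)$. For the drift term, write $\int_0^t S(t-s)F(s,X_s)\,ds$ and use the hypothesis $S(s-r)F(r,y)\in\ca{D}(A)$ with $\norm{AS(t-s)F(s,X_s)}_H \le g_1(t-s)(1+\norm{X_s}_H)$; since $g_1\in L^1([0,T])$ and $\sup_s\bb{E}\norm{X_s}_H<\infty$, the map $s\mapsto AS(t-s)F(s,X_s)$ is Bochner integrable (in $L^1(\Omega\times[0,t];H)$), so the closedness of $A$ lets me pull $A$ inside the $ds$-integral and conclude $\int_0^t S(t-s)F(s,X_s)\,ds\in\ca{D}(A)$ with $A$ of it given by $\int_0^t AS(t-s)F(s,X_s)\,ds$. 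For the stochastic term, the analogous bound $\norm{AS(t-s)B(s,X_s)}_{L_2(V_0,H)}^2 \le g_2(t-s)(1+\norm{X_s}_H^2)$ with $g_2\in L^1$ shows $AS(t-\cdot)B(\cdot,X_\cdot)\in\mathscr{N}_W^2$, so that $\int_0^t AS(t-s)B(s,X_s)\,dW_s$ is a well-defined $H$-valued martingale; then I use a stochastic Fubini / closedness-of-$A$ argument (as in \cite[Proposition 4.15]{daprato} or \cite[Theorem 2]{albeverio}) to interchange $A$ with the stochastic integral and deduce $\int_0^t S(t-s)B(s,X_s)\,dW_s\in\ca{D}(A)$ with $A$-image $\int_0^t AS(t-s)B(s,X_s)\,dW_s$. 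Summing the three, $X_t\in\ca{D}(A)$ a.s. and $\int_0^T\norm{AX_s}_H\,ds<\infty$ a.s. follows from the $L^1$-integrability of $g_1,g_2$ against the finite moments of $X$.

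Once $AX_s$ is integrable, the passage from the mild formulation \eqref{eq:mild} to the strong formulation \eqref{eq:fpsde} is the standard computation: differentiating (formally) the mild identity, or more rigorously applying $A$ to $\int_0^t S(t-s)(\cdots)\,ds$-type expressions and using $\frac{d}{dt}S(t)x = AS(t)x$, one rewrites $X_t = x + \int_0^t(AX_s + F(s,X_s))\,ds + \int_0^t B(s,X_s)\,dW_s$; this is precisely the argument in \cite[Theorem 2]{albeverio}, which the statement says we follow closely. The moment bound $\sup_s\bb{E}\norm{X_s}_H^2\le C$ in \eqref{eq:mild-bound} is inherited directly from Theorem \ref{theorem:mild-existence-uiqueness}.

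The main obstacle I anticipate is the rigorous interchange of the unbounded operator $A$ with the stochastic integral: one needs a stochastic-Fubini-type theorem valid for $\ca{D}(A)$-valued integrands together with the closedness of $A$, and some care that all the objects ($AS(t-s)B(s,X_s)$ as an $L_2(V_0,H)$-valued predictable process) are genuinely in $\mathscr{N}_W^2$ so the integrals exist. A secondary technical point is measurability/predictability of $s\mapsto AS(t-s)F(s,X_s)$ and $s\mapsto AS(t-s)B(s,X_s)$, which follows from continuity of the semigroup off $0$ and the measurability hypotheses on $F,B$ in Assumptions \ref{assumptions}, but must be checked so the Bochner and Itô integrals are well-defined. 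Everything else is routine given the $L^1$-in-time bounds $g_1,g_2$ and the finite second moments of $X$.
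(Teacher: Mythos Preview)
Your proposal is correct and follows essentially the same route as the paper: start from the mild solution supplied by Theorem~\ref{theorem:mild-existence-uiqueness} (which already yields \eqref{eq:mild-bound}), use the last bullet of Assumptions~\ref{assumptions} to check that each term of the mild formula lies in $\ca{D}(A)$ with integrable $A$-image, and then pass to the strong formulation via Fubini / stochastic Fubini and the identity $S(t)y-y=\int_0^t AS(s)y\,ds$, exactly as in \cite[Theorem~2]{albeverio}. The only cosmetic difference is that the paper computes $\int_0^t AX_s\,ds$ directly and swaps the order of integration, whereas you phrase it as pulling the closed operator $A$ through each integral first; the estimates involved (the $g_1,g_2$ bounds against the finite moments of $X$) are identical.
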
	
	\begin{proof}
By applying Theorem \ref{theorem:mild-existence-uiqueness} we have a mild solution already satisfying \eqref{eq:mild-bound} and then, due to Assumptions \ref{assumptions}, from \eqref{eq:mild} we get that for all $t\in[0,T]$, $X_t\in\ca{D}(A)\ \p$-a.e. and
		\begin{align*}
			\int_0^t AX_s = \int_0^t AS(s)xds + \underbrace{\int_0^t\int_0^sAS(s-r)F(r,X_r)drds}_{\bf I} + \underbrace{\int_0^t\int_0^sAS(s-r)B(r,X_r)dW_rds}_{\bf II}.
		\end{align*}
		Basically, the idea here is to use Fubini theorem and its stochastic version (see \cite[Section $4.5$]{daprato}) together with the fact that $S(t)y - y = \int_0^t AS(s)ds$ for $y\in\ca{D}(A)$. The bounds that $F$ and $B$ satisfy in Assumptions \ref{assumptions} imply that,
		\begin{align*}
			\int_0^T\int_0^s \norm{AS(s-r)F(r,X_r)}_H drds &\le \int_0^T\int_0^s g_1(s-r)drds + \int_0^T\int_0^s g_1(s-r)\norm{X_r}_H drds \\
			&\le \norm{g_1}_{L^1([0,T])}\parent{T + \int_0^T \norm{X_r}_H dr}< \infty\quad \p\text{-a.e.}.
		\end{align*}
		And,
		\begin{align*}
			\int_0^T \bb{E}\int_0^s\norm{AS(s-r)B(r,X_r)}^2_{L_2(V_0,H)} drds&\le \int_0^T\int_0^s g_2(s-r)drds + \int_0^T\bb{E}\int_0^s g_2(s-r)\norm{X_r}^2_H drds\\
			&\le \norm{g_1}_{L^1([0,T])}\parent{1 + T\bb{E}\Bigg[\underset{r\in[0,T]}{\sup}\norm{X_r}^2_H\Bigg]} < \infty.
		\end{align*}
		Then, by Fubini Theorem,
		\begin{align*}
			{\bf I} &= \int_0^t S(t-r)F(r,X_r) dr - \int_0^t F(r,X_r) dr\quad\text{and},\\
			{\bf II} &= \int_0^t S(t-r)B(r,X_r) dW_r - \int_0^t B(r,X_r) dW_r.
		\end{align*}
		Therefore,
		\begin{align*}
			\int_0^t AX_s ds = &~{} S(t)x - x + \int_0^t S(t-r)F(r,X_r) dr - \int_0^t F(r,X_r) dr \\
			& + \int_0^t S(t-r)B(r,X_r) dW_r - \int_0^t B(r,X_r) dW_r.
		\end{align*}
		Hence,
		\begin{align*}
			X_t = x + \int_0^t AX_s ds + \int_0^tF(r,X_r)dr + \int_0^t B(r,X_r)dW_r\quad\p\text{-a.e.},
		\end{align*}
		and the proof is complete.
	\end{proof}

\subsection{The backward process}

Now we provide existence results for the backward process \eqref{eq:bpsde}, following ideas in \cite[Lemma $4.2$]{fuhrman}.

\begin{lemma}\label{lemma:representation-existence}
	Let $\eta\in L^2(\Omega,\ca{F}_T,\p)$ and $f\in\mathscr{N}_W (0,T;\R)$. Then there exist a unique pair $(Y,Z)\in \mathscr{S}^2_T(\R)\times\mathscr{N}_W (0,T;L_2^0(V,\R))$ such that,
	\begin{align}
		\label{eq:lemma-backward-eq}
		Y_t = \eta + \int_t^T f_s ds - \int_t^T \prom{Z_s,\cdot}_0 dW_s.
	\end{align}
	Furthermore, the following bounds are satisfied,
	\begin{align}
		\label{eq:bounds-lemma}
		\bb{E} \left( \int_0^T e^{2\beta s}\norm{Z_s}_0^2 ds \right) \wedge \bb{E}\left( \underset{s\in[0,T]}{\sup} e^{2\beta s} |Y_s|^2 \right) \le  \frac{4}{\beta}\bb{E}\int_0^T e^{2\beta s} |f_s|^2 ds + 8e^{2\beta T} \bb{E}|\eta|^2.
	\end{align}
	Where $\wedge$ indicates the maximum between both quantities.
\end{lemma}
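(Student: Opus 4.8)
The statement is a standard BSDE existence/uniqueness result with a weighted a~priori estimate; the natural route is a fixed-point argument in a suitably weighted norm combined with the Martingale Representation Theorem~\ref{lemma:mg-representation}. First I would observe that since the generator $f$ does not depend on $(Y,Z)$, the equation is in fact \emph{linear and decoupled}: one can guess the solution directly. Define the $\ca{F}_T$-measurable square-integrable random variable $\xi = \eta + \int_0^T f_s\,ds$, and set the martingale $M_t = \bb{E}(\xi\mid\ca{F}_t)$, which belongs to $\mathscr{M}^2_T(\R)$. By Theorem~\ref{lemma:mg-representation} applied on $[0,T]$ there exists $(Z_t)_{t\in[0,T]}\in\mathscr{N}_W(0,T;L^0_2(V,\R))$ with $M_t = \bb{E}(\xi) + \int_0^t\prom{Z_s,\cdot}_0\,dW_s$. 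Then I would \emph{define} $Y_t := M_t - \int_0^t f_s\,ds$ and check by direct substitution that the pair $(Y,Z)$ solves \eqref{eq:lemma-backward-eq}: indeed $Y_T = \xi - \int_0^T f_s\,ds = \eta$, and subtracting the expression for $Y_t$ from $Y_T$ gives $Y_t = \eta + \int_t^T f_s\,ds - \int_t^T\prom{Z_s,\cdot}_0\,dW_s$, with $Y$ continuous and adapted. Membership $Y\in\mathscr{S}^2_T(\R)$ follows from Doob's inequality applied to $M$ together with $\int_0^T|f_s|\,ds\in L^2$ via Cauchy--Schwarz in time.

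For uniqueness, suppose $(Y,Z)$ and $(Y',Z')$ are two solutions; then $\delta Y := Y-Y'$, $\delta Z:= Z-Z'$ satisfy $\delta Y_t = -\int_t^T\prom{\delta Z_s,\cdot}_0\,dW_s$, so $\delta Y$ is a continuous square-integrable martingale with $\delta Y_T = 0$, forcing $\delta Y\equiv 0$ and, by the isometry of the stochastic integral, $\delta Z = 0$ in $\mathscr{N}_W$.

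It remains to establish the weighted bound \eqref{eq:bounds-lemma}, which is the only genuinely computational step. Here I would apply the It\^o formula (in the Hilbert-valued form available from \cite{daprato}, Section~4) to the process $e^{2\beta t}|Y_t|^2$. This yields, after taking expectations and using that the stochastic integral term is a true martingale,
\begin{align*}
	\bb{E}\,e^{2\beta t}|Y_t|^2 + 2\beta\,\bb{E}\int_t^T e^{2\beta s}|Y_s|^2\,ds + \bb{E}\int_t^T e^{2\beta s}\norm{Z_s}_0^2\,ds = \bb{E}\,e^{2\beta T}|\eta|^2 - 2\,\bb{E}\int_t^T e^{2\beta s} Y_s f_s\,ds.
\end{align*}
Bounding the last term by Young's inequality $2|Y_s f_s|\le 2\beta |Y_s|^2 + \frac{1}{2\beta}|f_s|^2$ absorbs the $Y$-integral on the left and leaves, at $t=0$, the estimate $\bb{E}\int_0^T e^{2\beta s}\norm{Z_s}_0^2\,ds \le \frac{1}{2\beta}\bb{E}\int_0^T e^{2\beta s}|f_s|^2\,ds + e^{2\beta T}\bb{E}|\eta|^2$, which is even sharper than the claimed constant. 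For the supremum bound on $Y$, I would go back to $Y_t = \bb{E}(\xi\mid\ca{F}_t) - \int_0^t f_s\,ds$, write $e^{\beta t}|Y_t| \le e^{\beta t}|\bb{E}(\xi\mid\ca{F}_t)| + e^{\beta t}\int_0^t|f_s|\,ds$, apply Doob's inequality to the martingale $e^{\beta t}\bb{E}(\xi\mid\ca{F}_t)$ (or more carefully to $\bb{E}(\xi\mid\ca{F}_t)$ and then multiply by the increasing factor $e^{\beta T}$), use $(a+b)^2\le 2a^2+2b^2$ from \eqref{eq:square-bound}, and control $\big(\int_0^T|f_s|\,ds\big)^2$ by Cauchy--Schwarz, finally re-expressing everything in terms of $\bb{E}\int_0^T e^{2\beta s}|f_s|^2\,ds$ and $\bb{E}|\eta|^2$; tracking the constants gives the stated $\frac{4}{\beta}$ and $8e^{2\beta T}$ (with room to spare). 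Taking the maximum of the two bounds gives \eqref{eq:bounds-lemma}.

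The main obstacle is purely bookkeeping: making sure the It\^o formula applies (the integrand $\prom{Z_s,\cdot}_0$ is genuinely in $\mathscr{N}_W$, so the stochastic integral is a square-integrable martingale and its expectation vanishes) and that the constants come out no worse than $\frac{4}{\beta}$ and $8e^{2\beta T}$ after the Young/Cauchy--Schwarz splittings; there is no conceptual difficulty since the decoupled linear structure of \eqref{eq:lemma-backward-eq} removes any need for a contraction argument.
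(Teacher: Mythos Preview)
Your existence and uniqueness arguments coincide with the paper's (the paper merely cites \cite{fuhrman} for uniqueness, while you spell out the martingale-difference argument). The genuine divergence is in the weighted estimate~\eqref{eq:bounds-lemma}. You take the classical BSDE route: apply It\^o to $e^{2\beta t}|Y_t|^2$ and absorb via Young's inequality. The paper instead avoids It\^o entirely and proceeds by an explicit representation: it applies Theorem~\ref{lemma:mg-representation} not only to $\xi$ but \emph{separately} to $\eta$ (obtaining an integrand $(L_u)_u$) and to each fixed $f_s$ (obtaining a two-parameter kernel $K(u,s)$), then identifies $Z_u = L_u - \int_u^T K(u,s)\,ds$ by uniqueness and bounds the two pieces directly with Bochner's estimate and H\"older. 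Your route is shorter and, for the $Z$-term, even sharper (you obtain $\tfrac{1}{2\beta}$ and $1$ in place of $\tfrac{4}{\beta}$ and $8$); the paper's route, while heavier, makes the structure of $Z$ transparent and sidesteps any need to justify It\^o for $e^{2\beta t}|Y_t|^2$.

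One caveat on constants: for the $\sup$-bound on $Y$ you should switch to the form $Y_t = \bb{E}(\eta\mid\ca{F}_t) + \bb{E}\big(\int_t^T f_s\,ds\,\big|\,\ca{F}_t\big)$ (obtained by conditioning the equation on $\ca{F}_t$), rather than your $M_t - \int_0^t f_s\,ds$. The point is that the weight $e^{2\beta t}$ cancels cleanly against $\int_t^T e^{-2\beta s}\,ds \le \tfrac{e^{-2\beta t}}{2\beta}$, whereas against $\int_0^t e^{-2\beta s}\,ds$ it leaves a stray factor $e^{2\beta T}$ on the $f$-term and also mixes $\eta$ with $f$ through $\xi$. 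The paper uses precisely this $\int_t^T$ trick (followed by Doob on the martingale $\bb{E}\big(\sqrt{\int_0^T e^{2\beta s}|f_s|^2\,ds}\,\big|\,\ca{F}_t\big)$) to land on the stated $\tfrac{4}{\beta}$ and $8e^{2\beta T}$.
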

\begin{proof}
	For uniqueness to the first part of \cite[Lemma $4.2$]{fuhrman}. First, we prove existence, define $\xi=\eta ~ + ~ \int_0^T f_s ds\in L^2(\Omega,\ca{F}_T,\p)$. Then, by Theorem \eqref{lemma:mg-representation}, there exists $Z\in\mathscr{N}_W (0,T;L_2^0(V,\R))$ such that
	\begin{align}\label{eq:chi-representation}
		\xi = \bb{E}(\xi|\ca{F}_t) + \int_t^T \prom{Z_s,\cdot}_0 dW_s,
	\end{align}
	where we applied Remark \ref{remark:schmidt-dual} to notice that $L_2(V_0,\R)=V_0$. Define now $Y_t=\bb{E}(\xi|\ca{F}_t) - \int_0^t f_s ds$, follows that
	\begin{align}\label{eq:representation-result}
		Y_t = \eta + \int_t^T f_s ds - \int_t^T \prom{Z_s,\cdot}_0 dW_s.
	\end{align}
	To conclude that $(Y_t)_{t\in[0,T]}\in\mathscr{S}^2_T(\R)$ we just note that by \eqref{eq:chi-representation}, \eqref{eq:representation-result} and the definition of $\xi$, one has for every $t\in[0,T]$
	\begin{align*}
		\bb{E}|Y_t|^2\le 3\parent{\bb{E}|\eta|^2 + T\bb{E}\int_0^T |f_s|^2 ds + \bb{E}\int_0^T \norm{Z_s}_0^2 ds}\le 27\parent{\bb{E}|\eta|^2 + \bb{E}\int_0^T f_s^2 ds }<\infty. 
	\end{align*}
	In order to prove estimate \eqref{eq:bounds-lemma}, we bound both quantities at left side by the right side. Sssume the existence and uniqueness of a solution $(Y,Z)$ and note that for almost all $s\in[0,T]$, $\bb{E}|f_s|^2<\infty$, thus by Theorem \ref{lemma:mg-representation} there exists $(K(u,s))_{u\in[0,s]}\in\mathscr{N}_W(0,s;L_2^0(V,\R))$ such that,
	\begin{align}
		\label{eq:f}
		f_s = \bb{E}(f_s|\ca{F}_t) + \int_t^s K(u,s)dW_u,\ t\in[0,s].
	\end{align}
	We extend $K$ to $[0,T]\times [0,T]$ in the following way,
	\begin{align*}
		K:[0,T]\times[0,T]\times\Omega&\longrightarrow L_2^0(V,\R)\\
		(u,s,\omega) \qquad &\longmapsto ~ K(u,s)(\omega)\ind{[0,s]}(u)=\begin{cases}K(u,s)(\omega),\quad &u\le s\\ 0,\quad&\sim. \end{cases}
	\end{align*}
	$(\ca{P}_T\times\ca{B}([0,T]))$-measurability of $K$ is discussed in \cite{fuhrman}, but it is no difficult to convince oneself of this. In the same way there exists $(L_t)_{t\in[0,T]}\in\mathscr{N}_W(0,T;L_2^0(V,\R))$ such that,
	\begin{align}
		\label{eq:eta}
		\eta = \bb{E}(\eta|\ca{F}_t) + \int_t^T L_sdW_s,\quad  t\in[0,T].
	\end{align}
	By taking $\bb{E}(\cdot|\ca{F}_t)$ in \eqref{eq:lemma-backward-eq} then using conditional Fubini's theorem, and replacing \eqref{eq:f} and \eqref{eq:eta} we have that for all $t\in[0,T]$,
	\begin{align*}
		Y_t = \eta - \int_t^T f_sds - \int_t^T L_s dW_s + \int_t^T \int_t^T K(u,s)\ind{[t,s]}(u) dW_u ds. 			
	\end{align*}
	Due to $\int_t^T \bb{E}\int_t^T \norm{K(u,s)}_0^2\ind{[t,s]}(u) duds < \infty$ (it can be bounded by a factor of $\norm{f}_{\mathscr{N}}$), we may apply stochastic Fubini theorem (see \cite[Section $4.5$]{daprato}) getting,
	\begin{align*}
		Y_t = \eta -\int_t^T f_s ds - \int_t^T\parent{L_u - \int_u^T K(u,s) ds}dW_s.
	\end{align*}
	Then by uniqueness,
	\begin{align*}
		Z_u = L_u - \int_u^T K(u,s) ds,\quad\forall u\in[0,T],
	\end{align*}
	which allows us to compute,
	\begin{align*}
		\bb{E}\int_0^T e^{2\beta u}\norm{Z_u}^2_0 du = \underbrace{2\bb{E}\int_0^T e^{2\beta u} \norm{L_u}^2_0 du}_{\bf I} + \underbrace{2\bb{E}\int_0^T e^{2\beta u}\norm{\int_u^T K(u,s)ds}_0^2 du}_{\bf II}.
	\end{align*}
	By standard procedures and using \eqref{eq:eta} we get ${\bf I}\le 8e^{2\beta T}\bb{E}|\eta|^2$. To work with ${\bf II}$ we first note that for any $u\in [0,T]$,
	\begin{align*}
		\norm{\int_u^T K(u,s) ds}_0^2 \le \int_u^T e^{-2\beta s}ds \int_u^T e^{2\beta s}\norm{K(u,s)}_0^2 ds\le\frac{e^{-2\beta u}}{2\beta} \int_u^T e^{2\beta s}\norm{K(u,s)}_0^2 ds,
	\end{align*}
	where we applied Bochner's estimate ($\norm{\int f}\le \int \norm{f}$) and H\"older's inequality. Then, by replacing the last relation in ${\bf II}$ and using Fubini theorem, 
	\begin{align*}
		{\bf II}&\le \frac{1}{\beta}\bb{E} \int_0^T\int_u^T e^{2\beta s}\norm{K(u,s)}_0^2 ds du = \frac{1}{\beta}\bb{E} \int_0^T\int_0^T e^{2\beta s}\norm{K(u,s)}_0^2 \ind{[u,T]}(s) ds du\\
		& = \frac{1}{\beta} \int_0^T e^{2\beta s} \bb{E}\parent{\int_0^s \norm{K(u,s)}_0^2 du} ds \le \frac{4}{\beta}\int_0^T e^{2\beta s}\bb{E}|f_s|^2 ds
	\end{align*}
	Now for the second bound we first note that by taking $\bb{E}(\cdot|\ca{F}_t)$ we have, 	
	\begin{align*}
		Y_t = \bb{E}(\eta|\ca{F}_t) - \bb{E}\parent{\int_t^T f_s ds\Bigg|\ca{F}_t},
	\end{align*}
	and then,
	\begin{align*}
		\bb{E} \underset{t\in[0,T]}{\sup} e^{2\beta t} |Y_t|^2 \quad  \le \quad  \underbrace{2\bb{E} \underset{t\in[0,T]}{\sup} e^{2\beta t} |\bb{E}(\eta|\ca{F}_t)|^2}_{\bf A} \quad +\quad  \underbrace{2\bb{E} \underset{t\in[0,T]}{\sup} e^{2\beta t}\Bigg| \bb{E}\parent{\int_t^T f_s ds \Bigg|\ca{F}_t} \Bigg|^2}_{\bf B}.
	\end{align*}
	Using Doob's inequality we get ${\bf A}\le 8e^{2\beta T}\bb{E}|\eta|^2$. For the second term,
	\begin{align*}
		{\bf B} &\le 2\bb{E} \underset{t\in[0,T]}{\sup} e^{2\beta t}\Bigg| \bb{E}\parent{\sqrt{\int_t^T e^{-2\beta s} ds} \sqrt{\int_t^T e^{2\beta s} |f_s|^2 ds} \Bigg|\ca{F}_t} \Bigg|^2\\
		&\le \frac{1}{\beta}\bb{E} \underset{t\in[0,T]}{\sup}\Bigg| \bb{E}\parent{ \sqrt{\int_0^T e^{2\beta s} |f_s|^2 ds} \Bigg|\ca{F}_t} \Bigg|^2\\
		&\le \frac{4}{\beta}\bb{E}\int_0^T e^{2\beta s}|f_s|^2 ds.
	\end{align*}
	Where we used Doob's inequality on the last inequality. By putting all together we conclude the proof.
\end{proof}
\subsection{Existence in the nonlinear Forward-Backward model}
The existence and uniqueness of a solution $(Y,Z)$ to the backward equation \eqref{eq:bpsde} is well-known, here we follow the proof given in \cite{fuhrman}. The argument, as we are working in a non-linear framework, relies on an application of Banach's fixed point theorem. The problem is that with the parameters as they are, the fixed-point functional does not necessarily contract. A solution to this issue is possible by giving equivalent norms to $\mathscr{N}_W(0,T;L_2^0(V;\R))$ and $\mathscr{S}^2_T(\R)$ parameterized by a positive real number $\beta$. Let $\beta>0$, consider
\begin{align*}
	\norm{Y}^2_{\mathscr{S}^2_{T,\beta}} = \bb{E} \left( \underset{s\in[0,T]}{\sup} e^{2\beta s}|Y|^2 \right) \quad\text{and}\quad\norm{Z}^2_{\mathscr{N}_{W,\beta}} = \bb{E} \left(\int_0^T e^{2\beta s}\norm{Z}^2_0 ds\right).
\end{align*} 
With a bit of work we can see that $\norm{\cdot}_{\mathscr{S}^2_{T,\beta}}$ and $\norm{\cdot}_{\mathscr{N}_{W,\beta}}$ are equivalent to $\norm{\cdot}_{\mathscr{S}^2_T}$ and $\norm{\cdot}_{\mathscr{N}_W}$, respectively.
\begin{prop} 
	\label{prop:backward-existence}
	Given a $H$-valued stochastic process $(X_t)_{t\in[0,T]}$ such that
	\begin{align}\label{eq:bound-backward-existence}
		\bb{E} \left( \int_0^T \psi(s,X_s,0,0)^2 ds \right)< \infty,
	\end{align}
	there exist a unique solution $(Y,Z)\in\mathscr{S}^2_T(\R)\times\mathscr{N}_W (0,T;L_2^0(V,\R))$ to equation \eqref{eq:bpsde} and there exists $C=C(K,T)>0$ such that,
	\begin{align}\label{eq:bound2-backward-existence}
		\norm{Y}_{\ca{S}^2_T}^2 + \norm{Z}_{\ca{N}_W}^2 \le C \parent{\bb{E}\phi(X_T)^2 + \bb{E}\int_0^T \psi(s,X_s,0,0)^2ds}.
	\end{align}
	\begin{proof}
		Again, we follow the proof given in \cite[Proposition $4.3$]{fuhrman}. The following result is proven as the majority of existence of solutions to non-linear equations results, this is, by considering an adequate operator from a Banach space to itself and applying Banach's fixed point Theorem. For $\beta>0$ consider $\mathscr{K}_{\beta} = \mathscr{S}^2_T(\R)\times\mathscr{N}_W (0,T;L_2^0(V,\R))$ which is a Banach space endowed with,
		\begin{align*}
			\norm{(Y,Z)}_{\mathscr{K}_{\beta}}^2 =&\norm{Y}^2_{\mathscr{S}^2_{T,\beta}} + \norm{Z}^2_{\mathscr{N}_{W,\beta}}\\
			=&~{} \bb{E}\underset{s\in[0,T]}{\sup}e^{2\beta s} |Y_s|^2 + \bb{E}\int_0^T e^{2\beta s}\norm{Z_s}^2_0 ds.
		\end{align*}
		Let $\Psi\colon\mathscr{K}_{\beta}\to\mathscr{K}_{\beta}$ be defined as $\Psi(U,V)=(Y,Z)$ where $(Y,Z)$ is such that,
		\begin{align*}
			Y_t + \int_t^T \prom{Z_s,\cdot}_0 dW_s = \phi(X_T) + \int_t^T\psi(s,X_s,U_s,V_s)ds.
		\end{align*}
		Given $(U,V)\in\mathscr{K}_{\beta}$, $\Psi(U,V)$ is well-defined by Lemma \ref{lemma:representation-existence} taking $(f_s)_{s\in[0,T]} = (\psi(s,X_s,U_s,V_s))_{s\in[0,T]}$ which is an element of $\mathscr{N}_W(0,T;\R)$ due to the Lipschitz condition imposed on $\psi$ and \eqref{eq:bound-backward-existence}, the existence is proven if we show that $\Psi$ is a contraction. Let $(U,V),(\bar{U},\bar{V}),(Y,Z),(\bar{Y},\bar{Z})\in\mathscr{K}_{\beta}$ be such that $\Psi(U,V) = (Y,Z)$ and $\Psi(\bar{U},\bar{V}) = (\bar{Y},\bar{Z})$, follows that for all $t\in [0,T]$,
		\begin{align*}
			Y_t - \bar{Y}_t + \int_t^T \prom{Z_t - \bar{Z}_t,\cdot}_0 dW_S = \int_t^T\parent{\psi(s,X_s,U_s,V_s) - \psi(s,X_s,\bar{U}_s,\bar{V}_s)} ds.
		\end{align*}
		This means that $(Y-\bar{Y},Z-\bar{Z})$ satisfies Lemma \ref{lemma:representation-existence} with $\eta = 0$ and $f_s = \psi(s,X_s,U_s,V_s) - \psi(s,X_s,\bar{U}_s,\bar{V}_s)$. Thus
		\begin{align*}
			\norm{\Psi(U,V) - \Psi(\bar{U},\bar{V})}^2_{\mathscr{K}_{\beta}}&\le \frac{8K}{\beta}\bb{E}\int_0^T e^{2\beta s}\parent{|U_s-\bar{U}_s|^2+\norm{V_s-\bar{V}_s}_0^2} ds\\
			&\le\frac{8K}{\beta}\bb{E}\parent{T\underset{s\in[0,T]}{\sup}e^{2\beta s}|U_s-\bar{U}_s|^2 + \int_0^T e^{2\beta s}\norm{V_s-\bar{V}_s}^2_0 ds }\\
			&\le\frac{8K(T + 1)}{\beta}\norm{(U,V) - (\bar{U},\bar{V})}^2_{\mathscr{K}_{\beta}}.
		\end{align*}
		By taking $\beta=17K(T+1)$ we show that $\Psi$ is a contraction, and therefore, the existence is proven. Uniqueness follows easily by standard arguments. Consider now the solution $(Y,Z)$ by estimates \eqref{eq:bounds-lemma},
		\begin{align*}
			\norm{Y}_{\ca{S}^2_{T,\beta}}^2 + \norm{Z}_{\ca{N}_{W,\beta}}^2 &\le 16e^{2\beta T}\bb{E}\phi(X_T)^2 + \frac{8}{\beta}\underbrace{\bb{E}\int_0^T \psi(s,X_s,Y_s,Z_s)^2 ds}_{{\bf I}}.
		\end{align*}
		Now, by the Lipschitz condition,
		\begin{align*}
			{\bf I} &\le 2K\bb{E}\int_0^T e^{2\beta s} \parent{|Y_s|^2 + \norm{Z_s}_0^2} + 2e^{2\beta T}\bb{E}\int_0^T \psi(s,X_s,0,0)^2ds\\
			&\le 2K(T+1)\parent{\norm{Y}_{\ca{S}^2_{T,\beta}}^2 + \norm{Z}_{\ca{N}_{W,\beta}}^2} + 2e^{2\beta T}\bb{E}\int_0^T \psi(s,X_s,0,0)^2ds.
		\end{align*}
		Hence,
		\begin{align*}
			\norm{Y}_{\ca{S}^2_{T,\beta}}^2 + \norm{Z}_{\ca{N}_{W,\beta}}^2 &\le 16e^{2\beta T}\bb{E}\phi(X_T)^2 + \frac{16}{\beta}e^{2\beta T}\bb{E}\int_0^T \psi(s,X_s,0,0)^2ds \\
			&+ \frac{16K(T+1)}{\beta} \parent{\norm{Y}_{\ca{S}^2_{T,\beta}}^2 + \norm{Z}_{\ca{N}_{W,\beta}}^2}.
		\end{align*}
		Chosen $\beta$ ensure that $16K(T+1)/\beta < 1$ and therefore,
		\begin{align*}
			\norm{Y}_{\ca{S}^2_{T}}^2 + \norm{Z}_{\ca{N}_{W}}^2 &\le \norm{Y}_{\ca{S}^2_{T,\beta}}^2 + \norm{Z}_{\ca{N}_{W,\beta}}^2\\
			&\le \Big[1 - \frac{16K(T+1)}{\beta}\Big]^{-1}\parent{16e^{2\beta T}\bb{E}\phi(X_T)^2 + \frac{16}{\beta}e^{2\beta T}\bb{E}\int_0^T \psi(s,X_s,0,0)^2ds}.
		\end{align*}
		Hence, estimate \eqref{eq:bound2-backward-existence} follows. The method that we have used remains valid if we intend to prove the existence of solutions $(Y,Z)\in\mathscr{S}^2_T(K)\times\mathscr{N}_W(0,T;L^0_2(V,K))$ and $\psi,\phi$ also taking values in the Hilbert space $K$.  
	\end{proof} 
\end{prop}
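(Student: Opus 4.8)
The plan is to obtain existence and uniqueness by Banach's fixed point theorem, using the linear result Lemma~\ref{lemma:representation-existence} as the basic building block, and then to read off the a priori bound \eqref{eq:bound2-backward-existence} from estimate \eqref{eq:bounds-lemma} by an absorption argument. Fix $\beta>0$ (to be chosen at the very end) and work in the Banach space $\mathscr{K}_\beta:=\mathscr{S}^2_T(\R)\times\mathscr{N}_W(0,T;L_2^0(V,\R))$ with the weighted norm $\norm{(Y,Z)}^2_{\mathscr{K}_\beta}=\norm{Y}^2_{\mathscr{S}^2_{T,\beta}}+\norm{Z}^2_{\mathscr{N}_{W,\beta}}$, which is equivalent to the unweighted one. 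Define $\Psi\colon\mathscr{K}_\beta\to\mathscr{K}_\beta$ by letting $\Psi(U,V):=(Y,Z)$ be the unique pair given by Lemma~\ref{lemma:representation-existence} with terminal value $\eta=\phi(X_T)$ and driver $f_s=\psi(s,X_s,U_s,V_s)$. This driver lies in $\mathscr{N}_W(0,T;\R)$ because of the Lipschitz bound \eqref{lip:psi} compared against $(s,X_s,0,0)$, the hypothesis \eqref{eq:bound-backward-existence}, and $(U,V)\in\mathscr{K}_\beta$; here one also uses $\phi(X_T)\in L^2(\Omega,\ca{F}_T,\p)$, which holds under a growth condition on $\phi$ together with the moment estimate of Theorem~\ref{theorem:mild-existence-uiqueness} (and without which \eqref{eq:bound2-backward-existence} is vacuous). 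Hence $\Psi$ is well defined.

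Next I would verify that $\Psi$ is a strict contraction for $\beta$ large. Given inputs $(U,V)$ and $(\bar U,\bar V)$ with images $(Y,Z)$ and $(\bar Y,\bar Z)$, subtracting the two defining equations shows that $(Y-\bar Y,Z-\bar Z)$ is the Lemma~\ref{lemma:representation-existence} solution with $\eta=0$ and driver $f_s=\psi(s,X_s,U_s,V_s)-\psi(s,X_s,\bar U_s,\bar V_s)$. Inserting this into \eqref{eq:bounds-lemma}, bounding $|f_s|^2$ by a constant times $|U_s-\bar U_s|^2+\norm{V_s-\bar V_s}_0^2$ via \eqref{lip:psi} and the comparison between the $\norm{\cdot}_V$ and $\norm{\cdot}_0$ norms recorded just after Assumptions~\ref{assumptions}, and estimating the supremum-in-time part of the $\mathscr{S}^2$ norm by the corresponding time integral at the cost of a factor $T$, one reaches $\norm{\Psi(U,V)-\Psi(\bar U,\bar V)}^2_{\mathscr{K}_\beta}\le \frac{C(K)(T+1)}{\beta}\norm{(U,V)-(\bar U,\bar V)}^2_{\mathscr{K}_\beta}$. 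Choosing $\beta$ so that $C(K)(T+1)/\beta<1$ makes $\Psi$ a contraction; Banach's theorem then delivers the unique fixed point, which is exactly the unique solution $(Y,Z)$ of \eqref{eq:bpsde} in $\mathscr{K}_\beta$, and therefore, by norm equivalence, in $\mathscr{S}^2_T(\R)\times\mathscr{N}_W(0,T;L_2^0(V,\R))$.

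For \eqref{eq:bound2-backward-existence} I would apply \eqref{eq:bounds-lemma} to the fixed point itself, i.e.\ with $\eta=\phi(X_T)$ and $f_s=\psi(s,X_s,Y_s,Z_s)$, obtaining $\norm{Y}^2_{\mathscr{S}^2_{T,\beta}}+\norm{Z}^2_{\mathscr{N}_{W,\beta}}\le 16e^{2\beta T}\bb{E}\phi(X_T)^2+\frac{8}{\beta}\bb{E}\int_0^T\psi(s,X_s,Y_s,Z_s)^2\,ds$. By \eqref{lip:psi} one has $\psi(s,X_s,Y_s,Z_s)^2\le C(K)\big(|Y_s|^2+\norm{Z_s}_0^2+\psi(s,X_s,0,0)^2\big)$, so the last term splits into a piece of size $\frac{C(K)(T+1)}{\beta}\big(\norm{Y}^2_{\mathscr{S}^2_{T,\beta}}+\norm{Z}^2_{\mathscr{N}_{W,\beta}}\big)$ and a piece bounded by $\frac{C(K)}{\beta}e^{2\beta T}\bb{E}\int_0^T\psi(s,X_s,0,0)^2\,ds$; enlarging $\beta$ once more so that the self-referential coefficient is $<1$, I absorb those terms on the left, rearrange, and use the norm equivalence to conclude \eqref{eq:bound2-backward-existence} with $C=C(K,T)$. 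The identical argument goes through when $\psi$, $\phi$ and the solution take values in a general Hilbert space $K$, replacing $|\cdot|$ by $\norm{\cdot}_K$ throughout.

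The one genuinely delicate point is the coordinated choice of $\beta$: a single value has to make the fixed-point map contractive \emph{and}, in the a priori estimate, force the coefficient of the self-referential term strictly below $1$, all while correctly tracking the passage between the supremum norm of $\mathscr{S}^2_T$ and the integral norm of $\mathscr{N}_W$ — which is exactly where the factors $T$ and $T+1$ enter. The remaining ingredients (the driver lying in $\mathscr{N}_W$, the Lipschitz manipulations, the $\norm{\cdot}_V$--$\norm{\cdot}_0$ comparison) are routine given Lemma~\ref{lemma:representation-existence} and Assumptions~\ref{assumptions}.
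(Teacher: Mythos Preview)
Your proposal is correct and follows essentially the same route as the paper: define a fixed-point map via Lemma~\ref{lemma:representation-existence}, use the $\beta$-weighted norms on $\mathscr{K}_\beta$ together with the Lipschitz condition \eqref{lip:psi} to get a contraction constant of order $(T+1)/\beta$, and then derive \eqref{eq:bound2-backward-existence} by applying \eqref{eq:bounds-lemma} to the fixed point and absorbing the self-referential term. The only cosmetic difference is that the paper fixes a single explicit value $\beta=17K(T+1)$ that simultaneously handles the contraction and the absorption, whereas you describe possibly enlarging $\beta$ in two steps; your added remark on why $\phi(X_T)\in L^2$ is a welcome clarification the paper leaves implicit.
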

Previous proposition lets us state, given our assumptions \eqref{assumptions}, that from now on we can refer to a solution $(X,Y,Z)$ of the system \eqref{eq:fpsde}-\eqref{eq:bpsde} with $(Y,Z)\in\mathscr{S}^2(\R)\times\mathscr{N}_W(0,T;L_2^0(V,\R))$ and $X$ a strong solution of the forward equation \eqref{eq:fpsde} given by Proposition \ref{prop:strong-forward-solution}. 

\subsection{Extra bounds on the nonlinear part} Finally, we finish this section with a boundedness lemma.

\begin{lemma}\label{lemma:f-int-bound}
	Let $(X_t)_{t\in[0,T]}$ be such that $\underset{s\in[0,T]}{\sup}\bb{E}\norm{X_s}_H^2 < \infty$ and $(Y,Z)\in\mathscr{S}^2_T(\R)\times\mathscr{N}_{W}(0,T;L_2^0(V))$. The following bound holds,
	\begin{align*}
		\bb{E} \left( \int_0^T \psi(s,X_s,Y_s,Z_s)^2 ds  \right) < \infty
	\end{align*}
	\begin{proof}
		First note that
		\begin{align*}
			\int_0^T\bb{E}\norm{X_s}_H^2 ds \le T\underset{s\in[0,T]}{\sup}\bb{E}\norm{X_s}_H^2 < \infty,
		\end{align*}
		then, Fubini theorem can be applied together with the Lipschitz condition on $\psi$ to get,
		\begin{align*}
			\bb{E}\int_0^T \psi(s,X_s,Y_s,Z_s)^2 ds&\le 2K\bb{E}\int_0^T (s + \norm{X_s}_H^2 + |Y_s|^2 + \norm{Z_s}_0^2)ds + 2T\psi(0,0,0,0)^2\\
			& \le \frac{CT^2}{2} + CT\underset{s\in [0,T]}{\sup}\bb{E}\norm{X_s}^2_H + CT\underset{s\in [0,T]}{\sup} |Y_s|^2 + C\bb{E}\int_0^T\norm{Z_s}_0^2 ds+CT\\
			& \le C\parent{1+\underset{s\in [0,T]}{\sup}\bb{E}\norm{X_s}^2_H + \norm{Y}^2_{\mathscr{S}^2_T(\R)} + \norm{Z}^2_{\mathscr{N}_W(0,T;L_2^0(V,\R))}}<\infty.
		\end{align*}
		Thus, the proof is completed.
	\end{proof}
\end{lemma}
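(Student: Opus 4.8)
The plan is to reduce the claim to the linear growth of $\psi$ implied by its Lipschitz property and then to control each of the resulting terms directly via the hypotheses. First I would combine the Lipschitz estimate \eqref{lip:psi} (applied at the reference point $(0,0,0,0)$) with the elementary inequality \eqref{eq:square-bound} to obtain the pointwise bound
\[
	\psi(s,X_s,Y_s,Z_s)^2 \le 2\psi(0,0,0,0)^2 + 8C^2\big(s + \norm{X_s}_H^2 + |Y_s|^2 + \norm{Z_s}_V^2\big),
\]
valid for every $s\in[0,T]$. Since $Z_s$ actually takes values in $V_0 = Q^{1/2}V$ while $\psi$ reads its last argument in $V$, I would use the inequality $\norm{u}_V \le \norm{Q^{1/2}}_{L(V)}\norm{u}_0$ recorded right after Assumptions \ref{assumptions} to replace $\norm{Z_s}_V^2$ by $\norm{Q^{1/2}}_{L(V)}^2\norm{Z_s}_0^2$, absorbing the finite constant into a single $C=C(K,T)$.

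Next I would integrate over $[0,T]$ and take expectations, interchanging $\bb{E}$ and $\int_0^T$; this is legitimate by Tonelli's theorem since the integrand is nonnegative, and what remains is precisely to verify that each resulting term is finite. Term by term: $\int_0^T (1+s)\,ds = T + T^2/2 < \infty$; $\bb{E}\int_0^T \norm{X_s}_H^2\,ds \le T\,\sup_{s\in[0,T]}\bb{E}\norm{X_s}_H^2 < \infty$ by assumption; $\bb{E}\int_0^T |Y_s|^2\,ds \le T\,\bb{E}\big(\sup_{s\in[0,T]}|Y_s|^2\big) = T\,\norm{Y}_{\mathscr{S}^2_T(\R)}^2 < \infty$ because $Y\in\mathscr{S}^2_T(\R)$; and $\bb{E}\int_0^T \norm{Z_s}_0^2\,ds = \norm{Z}_{\mathscr{N}_W(0,T;L_2^0(V))}^2 < \infty$ because $Z\in\mathscr{N}_W(0,T;L_2^0(V))$. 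Summing these contributions with the constants above yields the stated finiteness, and in fact the quantitative estimate $\bb{E}\int_0^T \psi(s,X_s,Y_s,Z_s)^2\,ds \le C\big(1 + \sup_{s\in[0,T]}\bb{E}\norm{X_s}_H^2 + \norm{Y}_{\mathscr{S}^2_T(\R)}^2 + \norm{Z}_{\mathscr{N}_W(0,T;L_2^0(V))}^2\big)$, the form that will be convenient in the numerical scheme.

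I do not expect a genuine obstacle here: this is a routine a priori estimate. The only two points deserving a line of care are the passage from the $V$-norm to the $V_0$-norm on $Z_s$ (handled by boundedness of $Q^{1/2}$) and the measurability/integrability needed to apply Tonelli, which is immediate from nonnegativity of the integrand together with the product measurability of $(s,\omega)\mapsto\psi(s,X_s(\omega),Y_s(\omega),Z_s(\omega))$ coming from continuity of $\psi$ and predictability of $(X,Y,Z)$.
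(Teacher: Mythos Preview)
Your proposal is correct and follows essentially the same route as the paper: Lipschitz continuity of $\psi$ at the reference point $(0,0,0,0)$ gives a linear-growth bound, which after squaring, integrating, and taking expectations is controlled term by term via $\sup_s\bb{E}\norm{X_s}_H^2$, $\norm{Y}_{\mathscr{S}^2_T}^2$, and $\norm{Z}_{\mathscr{N}_W}^2$. If anything, you are slightly more careful than the paper in two places: you invoke Tonelli rather than Fubini (the integrand is nonnegative), and you explicitly handle the passage from $\norm{Z_s}_V$ to $\norm{Z_s}_0$ via boundedness of $Q^{1/2}$, whereas the paper's display jumps directly to $\norm{Z_s}_0^2$ without comment.
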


\section{Functional Numerical Scheme}\label{sec:Functional Numerical Scheme}
Throughout this section we will work with functions that we call \textit{approximators} and are parameterized by a finite dimensional parameter $\theta\in\Theta_{\eta}\subset\R^{\eta}$ for some $\eta\in
\bb{N}$, also let $\Theta=\cup_{\eta\in\bb{N}}\Theta_{\eta}$. As the reader may anticipate, these functions will be the DeepOnets introduced in Section \ref{sec:infinite-dim-nn}. We work in generality first, to then apply our results to this particular case. %We adapt this level of generality due to the fact that all the computation and results derived here are independent of the form that the approximation may take.

\medskip

The following is a key assumption for the validity of our main results. 

\begin{assumptions}\label{assumptions3}
Assume we are given a function $u\in C^{1,2}([0,T]\times H)$ satisfying (\ref{eq:pde}) and a strong solution $(X_t)_{t\in [0,T]}$ to (\ref{eq:fpsde}).
\end{assumptions}

This assumption is natural in finite dimensions, but its validity in infinite dimensions is far from obvious.

\subsection{The numerical scheme}\label{sec:numerical-scheme}
 The scheme presented here is fully inspired by \cite{DBS} and relies on an application of It\^o Lemma to $(u(t,X_t))_{t\in [0,T]}$ as follows (see \cite[Theorem $4.32$]{daprato}),
\begin{align*}
	& u(t,X_t) \\
	&= u(0,X_0) + \int_0^t \prom{\nabla u(s,X_s), B(s,X_s)(\cdot)}_H dW_s - \int_0^t \psi\parent{s,X_s,u(s,X_s),B^* (s,X_s)\nabla u(s,X_s)} ds\\
	&=u(0,X_0) + \int_0^t \prom{B^*(s,X_s)\nabla u(s,X_s), \cdot}_{0} dW_s - \int_0^t \psi\parent{s,X_s,u(s,X_s),B^* (s,X_s)\nabla u(s,X_s)} ds.
\end{align*}
Consider now a uniform partition $\pi=\set{t_0=0,...,t_N=T}$ with $t_i=\frac{iT}{N}$ such that $h = t_{i+1} - t_i>0$ for all $i\in\set{0,...,N-1}$, then
\begin{align*}
	u(t_{i+1},X_{t_{i+1}}) =&~{} u(t_i,X_{t_i}) + \integral \prom{B^*(s,X_s)\nabla u(s,X_s), \cdot}_0 dW_s\\
	&- \integral \psi\parent{s,X_s,u(s,X_s),B^*(s,X_s)\nabla u(s,X_s)}ds.
\end{align*}
Let $\eta\in\bb{N}$ be a fixed natural number and let $\Theta_{\eta}\subset\R^{\eta}$ be also a fixed set. Now, let us introduce some \textit{approximators} as a collection of mappings $u_i^{\theta}\colon H\to\R$ for $i\in\set{0,...,N}$ and $z_i^{\theta}\colon H\to V_0 $ for $i\in\set{0,...,N-1}$. Additionally, consider an scheme $X^{\pi}=(X^{\pi}_t)_{t\in\pi}$ for the equation (\ref{eq:fpsde}) which we assume satisfies $\sigma(X^{\pi}_s\colon s\le t,s\in\pi)\subset \ca{F}_{t}$, $X^{\pi}_t\in L^4(\Omega,\ca{F}_t,\p;H)$ for $t\in\pi$. Here $X^{\pi}$ is a Markov process. These approximators are assumed to be such that $\set{u_i^{\theta}}_{\theta\in\Theta}$ and $\set{z_i^{\theta}}_{\theta\in\Theta}$ are dense in $L^2(H,\mu_{\xscheme{i}})$ and $L^2(H,\mu_{\xscheme{i}};V_0)$ respectively. Also assume that the approximators has polynomial growth at most. 
\begin{remark}
	Hilbert valued DeepOnets are a set of \textit{approximators}. This is obtained by defining
	\begin{align}
		\Theta_{\eta} = \bigcup_{d,m\in\bb{N}} \set{d}\times\ca{N}_{\sigma,7,d,m,\eta}\times\set{m}.
	\end{align}
	The size of the hidden layers of the NN (recall Definition \ref{def:finite-dim-nn}) is the variable that may increase in order to have a better performance of the DO.
\end{remark}
We propose a scheme in which we intend to find $\theta\in\Theta_{\eta}$ such that given $\hat{u}_{i+1}$, the following approximations hold as good as possible:
\begin{align*}
	u_i^{\theta}(\cdot)&\approx u(t_i,\cdot)\\
	z_i^{\theta}(\cdot)&\approx B^*(t_i,\cdot)\nabla u(t_i,\cdot)\\
	\hat{u}_{i+1}(\xscheme{i+1})&\approx u_i^{\theta}(\xscheme{i}) + \integral\prom{z^{\theta}_i(\xscheme{i}), \cdot}_0 dW_s - \psi\parent{t_i,\xscheme{i},u^{\theta}_i(\xscheme{i}),z^{\theta}_i(\xscheme{i})}h,
\end{align*}
each one in some proper measure for every $i\in\set{1,...,N-1}$. The above approximations motivates the definition of a cost function, $L_i:\Theta_{\eta}\to[0,+\infty)$, associated to $\theta\in\Theta_{\eta}$:
\begin{align*}
	L_i(\theta) = \bb{E}\Big| \hat{u}_{i+1}(\xscheme{i+1}) - u_i^{\theta}(\xscheme{i}) - \integral\prom{z^{\theta}_i(\xscheme{i}), \cdot}_{0}dW_s + \psi\parent{t_i,\xscheme{i},u^{\theta}_i(\xscheme{i}),z^{\theta}_i(\xscheme{i})}h \Big|^2.
\end{align*}
We present the following algorithm as an infinite-dimension extension of the one already presented in \cite{DBS} and \cite{yo}.\\

\medskip

\begin{algorithm}[H]
	\label{algorithm}
	\SetAlgoLined
	Start with $\hat{u}_N=\phi$\;
	\For{$i\in\set{N-1,...,1}$}{
		Given $\ug{i+1}$\;
		Compute $\theta^* = \underset{\theta\in\Theta_{\eta}}{\text{argmin}}\ L_i(\theta)$\;
		Update $(\ug{i},\hat{z}_{i})=\parent{u^{\theta^*}_i,z^{\theta^*}_i}$\;
	}
	\caption{DBDP1 infinite-dimension extension}
\end{algorithm}

\subsection{Previous Definitions and Results}
Let us introduce the operator $\bb{E}_i = \bb{E}(\cdot|\ca{F}_{t_i})$ defined for every integrable real or vector valued random variable. For the consistency proof of the algorithm we need to introduce a somehow auxiliary scheme $(\widehat{\ca{V}}_{t_i},\gb{Z}_{t_i} )_{i\in\set{0,...,N-1}}$ that is inspired by \cite{bruno-touzi}, used in \cite{DBS} and we generalize to the infinite-dimensional case as follows,
\begin{align}
	\widehat{\ca{V}}_{t_i} &= \bb{E}_i (\hat{u}_{i+1}(\xscheme{i+1})) + \psi(t_i,\xscheme{i},\widehat{\ca{V}}_{t_i},\gb{Z}_{t_i})h
	\label{eq:nu-gorro}\\
	\gb{Z}_{t_i} &= \frac{1}{h}\bb{E}_i (\hat{u}_{i+1}(\xscheme{i+1})\Delta W_i).\label{eq:z-gorro-barra}
\end{align}
Observe that these processes are adapted to the discrete filtration $\parent{\ca{F}_t}_{t\in\pi}$. The discrete process $\widehat{\ca{V}}_{t_i}$ for $i\in\set{0,...,N-1}$ is well-defined for sufficiently small $h$ as shown in Lemma \ref{lemma:vwell-defined} and by Markov property of $X^{\pi}$, there exists square integrable functions $v_i,z_i$ for $i\in\set{0,...,N-1}$ such that
\begin{align*}
	\widehat{\ca{V}}_{t_i} = v_i(\xscheme{i})\quad\text{and}\quad \gb{Z}_{t_i} = z_i(\xscheme{i}).
\end{align*}
\begin{lemma}\label{lemma:vwell-defined}
	Assume that for sufficiently small $h$ and every $i\in\set{0,...,N-1}$, $\bb{E}|\hat{u}_{i+1}(\xscheme{i+1})|^4<+\infty$. Then there exists $\widehat{\ca{V}}_{t_i}\in L^2 (\Omega, \ca{F}_{t_i}, \p)$ such that (\ref{eq:nu-gorro}) holds and $\gb{Z}_{t_i}\in L^2 (\Omega,\ca{F}_{t_i},\p;V)$.
\end{lemma}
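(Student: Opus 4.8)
The two objects are of a rather different nature, so I would treat them separately: $\gb{Z}_{t_i}$ is given by the explicit formula \eqref{eq:z-gorro-barra}, so only its integrability must be verified, and once it is in hand $\widehat{\ca{V}}_{t_i}$ is recovered from the implicit relation \eqref{eq:nu-gorro} by a contraction argument on $L^2$. First I would check that $\gb{Z}_{t_i}\in L^2(\Omega,\ca{F}_{t_i},\p;V)$. The random variable $\ug{i+1}(\xscheme{i+1})\Delta W_i$ is $V$-valued (a real scalar times an element of $V$), and $\Delta W_i=W_{t_{i+1}}-W_{t_i}$ is Gaussian in $V$ with law $\ca{N}(0,hQ)$, hence has finite moments of every order; in particular $\bb{E}\norm{\Delta W_i}_V^4<\infty$. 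Using conditional Jensen and the Cauchy--Schwarz inequality,
\begin{align*}
	\bb{E}\norm{\gb{Z}_{t_i}}_V^2 &= \frac{1}{h^2}\,\bb{E}\norm{\bb{E}_i\!\parent{\ug{i+1}(\xscheme{i+1})\Delta W_i}}_V^2 \le \frac{1}{h^2}\,\bb{E}\parent{|\ug{i+1}(\xscheme{i+1})|^2\,\norm{\Delta W_i}_V^2}\\
	&\le \frac{1}{h^2}\,\norm{\ug{i+1}(\xscheme{i+1})}_{L^4}^2\,\parent{\bb{E}\norm{\Delta W_i}_V^4}^{1/2},
\end{align*}
which is finite precisely because of the fourth-moment hypothesis on $\ug{i+1}(\xscheme{i+1})$; the $\ca{F}_{t_i}$-measurability is immediate from the definition of conditional expectation. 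This is exactly where the exponent $4$ (rather than $2$) in the assumption is needed: the bare $L^2$ control does not let one split the product.

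With $\gb{Z}_{t_i}$ fixed I would run a Banach fixed point argument on the Hilbert space $L^2(\Omega,\ca{F}_{t_i},\p)$, defining $\Phi(Y)=\bb{E}_i(\ug{i+1}(\xscheme{i+1}))+h\,\psi(t_i,\xscheme{i},Y,\gb{Z}_{t_i})$. One checks $\Phi$ maps $L^2(\Omega,\ca{F}_{t_i},\p)$ into itself: $\bb{E}_i(\ug{i+1}(\xscheme{i+1}))$ is square integrable (again by the hypothesis, via Jensen), and taking $t'=x'=y'=z'=0$ in \eqref{lip:psi} gives the linear growth bound $|\psi(t_i,\xscheme{i},Y,\gb{Z}_{t_i})|\le |\psi(0,0,0,0)|+C(t_i^{1/2}+\norm{\xscheme{i}}_H+|Y|+\norm{\gb{Z}_{t_i}}_V)$, whose right-hand side is in $L^2$ by \eqref{eq:square-bound} since $\xscheme{i}\in L^4(\Omega,\ca{F}_{t_i},\p;H)$, $Y\in L^2$ and $\gb{Z}_{t_i}\in L^2(\Omega,\ca{F}_{t_i},\p;V)$ by the previous step; all terms are $\ca{F}_{t_i}$-measurable by the standing assumptions on $\xscheme{i}$ and the measurability just established for $\gb{Z}_{t_i}$. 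For $Y,Y'\in L^2(\Omega,\ca{F}_{t_i},\p)$, the Lipschitz estimate \eqref{lip:psi} in the $y$-variable yields $\norm{\Phi(Y)-\Phi(Y')}_{L^2}\le hC\,\norm{Y-Y'}_{L^2}$, so $\Phi$ is a contraction as soon as $hC<1$, i.e. for $h$ sufficiently small as assumed. Banach's fixed point theorem then gives a unique $\widehat{\ca{V}}_{t_i}\in L^2(\Omega,\ca{F}_{t_i},\p)$ with $\widehat{\ca{V}}_{t_i}=\Phi(\widehat{\ca{V}}_{t_i})$, which is exactly \eqref{eq:nu-gorro}, completing the proof.

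I do not expect a genuine obstacle: this is the discrete-time counterpart of the fixed point scheme already used in Proposition \ref{prop:backward-existence}. The only point demanding some care is the bookkeeping around the $V$-valued product $\ug{i+1}(\xscheme{i+1})\Delta W_i$ and the fact that its conditional expectation lands in $L^2(\Omega,\ca{F}_{t_i},\p;V)$ rather than merely being Bochner integrable, which is the reason the hypothesis is stated with fourth moments. The smallness of $h$ required for the contraction is precisely the ``sufficiently small $h$'' in the statement, and it is compatible with the uniform partition since $h=T/N\to 0$ as $N\to\infty$.
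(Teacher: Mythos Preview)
Your proposal is correct and follows essentially the same approach as the paper: a Cauchy--Schwarz/fourth-moment bound for $\gb{Z}_{t_i}$ and a Banach fixed point on $L^2(\Omega,\ca{F}_{t_i},\p)$ for $\widehat{\ca{V}}_{t_i}$. Your ordering (first $\gb{Z}_{t_i}$, then the fixed point) is in fact slightly cleaner than the paper's, since the contraction map uses $\gb{Z}_{t_i}$, and you are more careful in checking that $\Phi$ maps $L^2$ to itself.
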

	
	\begin{proof}
		Let $i\in\set{0,...,N-1}$ and $f:L^2(\Omega,\ca{F}_{t_i},\p)\to L^2(\Omega,\ca{F}_{t_i},\p)$ be defined as
		\begin{align*}
			f(\xi)(\omega)=\bb{E}_i\parent{\ug{i+1}(\xscheme{i+1})}(\omega) + \psi\parent{t_i,\xscheme{i}(\omega),\xi(\omega), \gb{Z}_{t_i}(\omega)}h.
		\end{align*}
		For all $\xi \in L^2(\Omega,\ca{F}_{t_i},\p)$ and $\omega\in\Omega$. This function is well-defined by the properties of $\psi$ and the approximators. Let $\xi,\overline \xi\in L^2(\Omega,\ca{F}_{t_i},\p)$, then $\p$ a.s $|\psi(\xi)-\psi(\overline\xi)| \le h |\xi-\overline\xi|$, therefore
		\begin{align*}
			\norm{\psi(\xi)-\psi(\overline\xi)}_{L^2(\Omega,\ca{F}_{t_i}, \p)} \le h \norm{\xi-\overline \xi}_{L^2(\Omega,\ca{F}_{t_i}, \p)}.
		\end{align*}
		Taking $h<1$, which is independent of $i$, we can see that this function is a contraction on $L^2(\Omega,\ca{F},\p)$, and therefore, by applying Banach's fixed point theorem, we conclude the first result of this lemma. By standard computations,
		\begin{align*}
			\bb{E}\norm{\gbt{Z}{i}}^2_V &= \bb{E}\norm{\frac{1}{h}\bb{E}_i \parent{\hat{u}_{i+1}(\xscheme{i+1})\Delta W_i}}^2\\
			& \le \frac{1}{h^2}\bb{E}\parent{\bb{E}_i \norm{\hat{u}_{i+1}(\xscheme{i+1})\Delta W_i}_V}^2\le \frac{1}{h}\bb{E}\parent{|\hat{u}_{i+1}(\xscheme{i+1})|^2\norm{\Delta W_i}_V^2}\\
			&\le \frac{1}{h}\sqrt{\bb{E}|\hat{u}_{i+1}(\xscheme{i+1})|^4}\sqrt{\bb{E}\norm{\Delta W_i}^4_V} < \infty,
		\end{align*}
		where we used the fact that $W_t\in L^4(\Omega,\ca{F},\p;V)$. The proof is completed.
	\end{proof}

We intent to write $\gb{Z}_{t_i}$ as the average of some other process on $[t_i,t_{i+1}]$, to be consistent with the overline notation this process has to be denoted as $\widehat{Z}_{t}$ for $t\in [t_i,t_{i+1}]$.
\begin{lemma}\label{lemma:z-gorro}
	There exists a $V_0$-valued process $(\widehat{Z}_t)_{t\in[t_i,t_{i+1}]}$, which can be seen as an element of $\mathscr{N}_W([t_i,t_{i+1}];L_2(V_0,\R))$, such that,
	\begin{align*}
		\gbt{Z}{i} = \frac{1}{h}\bb{E}_i\parent{\integral \widehat{Z}_s ds}\in L^2(\Omega,\ca{F}_{t_i},\p;Q^{1/2}V).
	\end{align*}	
\end{lemma}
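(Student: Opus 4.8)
The plan is to produce the process $(\widehat Z_t)_{t\in[t_i,t_{i+1}]}$ by applying the Martingale Representation Theorem \ref{lemma:mg-representation} to the random variable $\hat u_{i+1}(\xscheme{i+1})$ on the interval $[t_i,t_{i+1}]$ and then identifying the average of the resulting integrand with $\gbt{Z}{i}$. First I would note that by hypothesis $\hat u_{i+1}(\xscheme{i+1})\in L^2(\Omega,\ca F_{t_{i+1}},\p)$ (indeed it is in $L^4$, by the assumption used in Lemma \ref{lemma:vwell-defined}), so Theorem \ref{lemma:mg-representation} applied with $W=\R$, $r=t_i$, $s=t_{i+1}$ yields a process $(\widehat Z_t)_{t\in[t_i,t_{i+1}]}\in\mathscr N_W([t_i,t_{i+1}];L^0_2(V,\R))$ such that
\begin{align*}
	\hat u_{i+1}(\xscheme{i+1}) = \bb{E}_i(\hat u_{i+1}(\xscheme{i+1})) + \integral \widehat Z_s\, dW_s.
\end{align*}
By Remark \ref{remark:schmidt-dual} we may regard $\widehat Z_s$ as a $V_0$-valued process, writing $\integral \prom{\widehat Z_s,\cdot}_0 dW_s$ if one prefers the inner-product notation consistent with \eqref{eq:bpsde}; this justifies the claim that $\widehat Z$ is an element of $\mathscr N_W([t_i,t_{i+1}];L_2(V_0,\R))$.

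The core computation is then to show $\gbt{Z}{i} = \frac1h\bb{E}_i(\integral \widehat Z_s\, ds)$. Starting from the defining formula \eqref{eq:z-gorro-barra}, $\gbt{Z}{i} = \frac1h\bb{E}_i(\hat u_{i+1}(\xscheme{i+1})\Delta W_i)$, I would substitute the martingale representation above for $\hat u_{i+1}(\xscheme{i+1})$. The term $\bb{E}_i(\hat u_{i+1}(\xscheme{i+1}))\Delta W_i$ contributes zero after applying $\bb{E}_i$, since $\Delta W_i = W_{t_{i+1}}-W_{t_i}$ has zero conditional mean. For the remaining term one needs the identity
\begin{align*}
	\bb{E}_i\parent{\parent{\integral \widehat Z_s\, dW_s}\,\Delta W_i} = \bb{E}_i\parent{\integral \widehat Z_s(Q^{1/2}\cdot)\, ds}\ \text{(suitably interpreted)},
\end{align*}
which is the Hilbert-space analogue of the It\^o isometry / "the covariation of $\int \widehat Z\,dW$ with $W$ is $\int \widehat Z\, Q\,ds$". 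Concretely, I would expand $\Delta W_i$ in the basis $(f_k)$ via \eqref{eq:w-representation}, use Lemma \ref{lemma:n-brownian-integral} to write both stochastic integrals in terms of the scalar Brownian motions $\beta^k$, and then apply the one-dimensional It\^o isometry together with independence of the $\beta^k$ to collapse the double sum; the outcome reproduces $\integral \widehat Z_s(Q^{1/2}f_k)\,ds$ summed against $\sqrt{\lambda_k}f_k$, i.e. precisely $\integral \widehat Z_s\, ds$ read as a $V_0$-valued Bochner integral. Membership of the result in $L^2(\Omega,\ca F_{t_i},\p;Q^{1/2}V)$ follows from $\widehat Z\in\mathscr N_W$ and conditional Jensen, exactly as in the bound for $\bb{E}\norm{\gbt{Z}{i}}_V^2$ in Lemma \ref{lemma:vwell-defined}.

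The main obstacle I anticipate is the careful bookkeeping in the second paragraph: making rigorous the passage from $\bb{E}_i(\hat u_{i+1}(\xscheme{i+1})\Delta W_i)$ to a Bochner integral of $\widehat Z_s$, since this mixes the $V$-valued increment $\Delta W_i$, the operator-valued integrand $\widehat Z_s$, and the covariance $Q$, and one must be consistent about whether $\widehat Z_s$ is viewed in $L_2^0(V,\R)$, in $V_0$, or in $V$. The cleanest route is to do everything through the finite-rank approximations $W^n$ of \eqref{eq:w-n} — establish the identity with $W^n$ and $\Delta W_i^n$ using Lemma \ref{lemma:n-brownian-integral} and elementary-process arguments, then pass to the limit as $n\to\infty$ using the approximation lemma preceding Lemma \ref{lemma:n-brownian-integral} and continuity of conditional expectation on $L^2$. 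Everything else is routine: well-definedness is inherited from Theorem \ref{lemma:mg-representation}, and the integrability statement is the same estimate already carried out for $\gbt{Z}{i}$.
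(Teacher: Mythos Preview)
Your proposal is correct and follows essentially the same route as the paper: apply the martingale representation theorem to $\hat u_{i+1}(\xscheme{i+1})$ on $[t_i,t_{i+1}]$ to produce $\widehat Z$, substitute into the definition \eqref{eq:z-gorro-barra} of $\gbt{Z}{i}$, kill the $\bb{E}_i(\hat u_{i+1}(\xscheme{i+1}))\Delta W_i$ term, expand $\Delta W_i$ and the stochastic integral along the basis via \eqref{eq:w-representation} and Lemma \ref{lemma:n-brownian-integral}, collapse the double sum by conditional It\^o isometry, and then pass to the limit in $L^2$ to recover the Bochner integral $\integral \widehat Z_s\,ds$. The paper carries out exactly this computation, including the explicit $L^2$ limit argument you describe as the ``main obstacle'', so there is nothing to add.
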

\begin{proof}
	Consider $N_t = \bb{E}(\hat{u}_{i+1}(\xscheme{i+1})|\ca{F}_t)$ for $t\in [t_i,t_{i+1}]$, this process is a square integrable martingale because $\hat{u}_{i+1}(\xscheme{i+1})\in L^2(\Omega,\ca{F}_{t_{i+1}},\p)$. By the martingale representation theorem \ref{lemma:mg-representation} there exists $(\widehat{Z}_t)_{t\in[t_i,t_{i+1}]}\in \mathscr{N}_W([t_i,t_{i+1}];L_2(V_0,\R))$, which ensures the a.e. Bochner integrability of $(\widehat{Z}_t)_{t\in[t_i,t_{i+1}]}$, such that,
	\begin{align*}
		N_t = N_{t_i} + \int_{t_i}^t\prom{\widehat{Z}_s,\cdot}_0 dW_s.
	\end{align*}
	By taking $t=t_i$,
	\begin{align*}
		\ug{i+1}(\xscheme{i+1}) = \bb{E}_i (\ug{i+1}(\xscheme{i+1})) + \integral\bra{\widehat{Z}_s}{\cdot}_{0}\ dW_s.
	\end{align*}
	It follows that,
	\begin{align*}
		h\gb{Z}_{t_i} = \bb{E}_i(\bb{E}_i(\ug{i+1}(\xscheme{i+1}))\Delta W_i) + 	\bb{E}_i\parent{\integral \bra{\widehat{Z}_s}{\cdot}_{0}\ dW_s\parent{W_{t_{i+1}} - W_{t_i}}}.
	\end{align*}
	Note that we took the equation from $\R$ to $V$. We can make the following elimination, 
	\begin{align*}
		\bb{E}_i(\bb{E}_i(\ug{i+1}(\xscheme{i+1}))\Delta W_i) =  \bb{E}_i(\ug{i+1}(\xscheme{i+1})) \bb{E}_i\Delta W_i = 0,
	\end{align*}
	which yields,
	\begin{align*}
		h\gb{Z}_{t_i} =\bb{E}_i\Big( \integral \bra{\widehat{Z}_s}{\cdot}_{0}\ dW_s\parent{W_{t_{i+1}} - W_{t_i}}\Big ).
	\end{align*}
	Recall that the representation \eqref{eq:w-representation} allows us to write $W_{t_{i+1}} - W_{t_i} = \sum_{j=1}^{\infty}f_j \sqrt{\lambda_j} (\beta_j(t_{i+1})-\beta_j(t_i))$, where the series converges in $L^2(\Omega,\ca{F},\p;V)$. Therefore, we can take the summation out of $\bb{E}_i$,
	\begin{align*}
		h\gb{Z}_{t_i} =\sum_{j=1}^{\infty}f_j\sqrt{\lambda_j}\bb{E}_i\Big( \integral \bra{\widehat{Z}_s}{\cdot}_{0}\ dW_s\integral d \beta_j(s)\Big ).
	\end{align*}
	Using Lemma \ref{lemma:n-brownian-integral} and the same argument as before with the $L^2 (\Omega,\ca{F},\p)$ limit
	\begin{align*}
		\integral\prom{\widehat{Z}_s,\cdot}_0 dW_s =\underset{n\to\infty}{\lim} \integral \prom{\widehat{Z}_s,\cdot}_0 dW^n_s= \sum_{k=1}^{\infty}\integral \prom{\widehat{Z}_s,Q^{1/2}f_j}_0 d\beta_s^k,
	\end{align*}
	we get,
	\begin{align*}
		h\gb{Z}_{t_i} &= \sum_{j=1}^{\infty}\sum_{k=1}^{\infty}\lambda_j^{1/2} f_j {\bb{E}}_i\parent{\integral \prom{\widehat{Z}_s,Q^{1/2}f_k}_0 d\beta^k_s\integral d\beta^j_s } \\
		&= \sum_{j=1}^{\infty} \bb{E}_i\parent{\integral \prom{\widehat{Z}_s,Q^{1/2}f_j}_0 Q^{1/2}f_j ds}.
	\end{align*}
	Where we used conditional Ito isometry. Last step is proving the following limit in $L^2(\Omega,\ca{F}_{t_i},\p;V)$,
	\begin{align*}
		\underset{n\to\infty}{\lim} \sum_{j=1}^n \integral \prom{\widehat{Z}_s, Q^{1/2}f_j}_0 Q^{1/2}f_j ds = \integral \widehat{Z}_s ds.
	\end{align*}
	Indeed,
	\begin{align*}
		& \bb{E}\norm{\integral \widehat{Z}_s ds - \sum_{j=1}^n \integral \prom{\widehat{Z}_s, Q^{1/2}f_j}_0 Q^{1/2}f_j ds}^2_V\\
		&= \bb{E}\norm{\integral\sum_{j=n+1}^{\infty}  \prom{\widehat{Z}_s, Q^{1/2}f_j}_0 Q^{1/2}f_j ds}^2_V \le h \bb{E}\integral\norm{\sum_{j=n+1}^{\infty}  \prom{\widehat{Z}_s, Q^{1/2}f_j}_0 Q^{1/2}f_j}^2_V ds\\
		&=h\bb{E}\integral\sum_{j=n+1}^{\infty}  |\prom{\widehat{Z}_s, Q^{1/2}f_j}_0|^2\prom{Q^{1/2}} ds\le h\bb{E}\integral \norm{\widehat{Z}_s}^2_0 ds \left(\sum_{j=n+1}^{\infty} \lambda_j \right),
	\end{align*}
	which approaches to $0$ as $n\to\infty$ because of $Q$ been trace class.  
\end{proof}

Recall the uniform partition $\pi$ with step $h$ from Subsection \ref{sec:numerical-scheme} and that $\Delta W_i = W_{t_{i+1}} - W_{t_i}$. 

\begin{lemma}
	The following holds:
	\begin{align*}
		\bb{E}_i\norm{\Delta W_i}_V^2 = \text{tr}(Q)h.
	\end{align*}
\end{lemma}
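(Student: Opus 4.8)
The plan is to compute $\bb{E}_i\norm{\Delta W_i}_V^2$ directly from the series representation of the $Q$-Wiener process and the fact that the increments of a $Q$-Wiener process are independent of $\ca{F}_{t_i}$. First I would recall from \eqref{eq:w-representation} that $\Delta W_i = W_{t_{i+1}} - W_{t_i} = \sum_{k=1}^{\infty}\sqrt{\lambda_k}(\beta^k_{t_{i+1}} - \beta^k_{t_i})f_k$, with convergence in $L^2(\Omega,\ca{F},\p;V)$, where the $\beta^k$ are independent standard real Brownian motions. Since $(f_k)_{k\in\bb{N}}$ is an orthonormal basis of $V$, Parseval's identity gives $\norm{\Delta W_i}_V^2 = \sum_{k=1}^{\infty}\lambda_k(\beta^k_{t_{i+1}} - \beta^k_{t_i})^2$.

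The next step is to take the conditional expectation $\bb{E}_i$. I would argue that $\Delta W_i$ is independent of $\ca{F}_{t_i}$ (a standard property of $Q$-Wiener processes, following from the independent increments in condition $(ii)$), so $\bb{E}_i\norm{\Delta W_i}_V^2 = \bb{E}\norm{\Delta W_i}_V^2$. Then, interchanging expectation and the (monotone, nonnegative-term) series by Tonelli's theorem, and using that each increment $\beta^k_{t_{i+1}} - \beta^k_{t_i}$ is centered Gaussian with variance $t_{i+1}-t_i = h$, I obtain
\begin{align*}
	\bb{E}\norm{\Delta W_i}_V^2 = \sum_{k=1}^{\infty}\lambda_k\,\bb{E}\big(\beta^k_{t_{i+1}} - \beta^k_{t_i}\big)^2 = \sum_{k=1}^{\infty}\lambda_k h = h\sum_{k=1}^{\infty}\lambda_k = \text{tr}(Q)h,
\end{align*}
where the last equality uses Assumptions \ref{Ass2p1} together with Lidskii's theorem ($\text{tr}(Q) = \sum_{k=1}^{\infty}\lambda_k < \infty$), which also legitimizes all the interchanges since the sum is finite.

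I do not expect a genuine obstacle here; the statement is essentially a routine computation. The only point requiring a word of care is the justification for interchanging $\bb{E}$ (or $\bb{E}_i$) with the infinite sum — but since all terms are nonnegative this is immediate by Tonelli, and the finiteness of $\text{tr}(Q)$ confirms the result is a well-defined finite quantity. An alternative, equally short route would be to invoke directly that $\Delta W_i \sim \mathscr{N}(0, hQ)$ from condition $(iii)$ in the definition of a $Q$-Wiener process, and recall that for a centered Gaussian measure on a Hilbert space with covariance operator $hQ$ the second moment of the norm equals the trace of the covariance, i.e. $h\,\text{tr}(Q)$; I would likely present the series computation as the primary argument since it is self-contained given what has already been established in the excerpt.
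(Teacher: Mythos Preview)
Your argument is correct. The paper, however, takes a different (and slightly slicker) route: it writes $\Delta W_i = \integral I_V\,dW_s$ and applies the It\^o isometry to get $\bb{E}\norm{\Delta W_i}_V^2 = h\norm{I_V}_{L_2(V_0,V)}^2 = h\sum_{k=1}^\infty \lambda_k = \text{tr}(Q)h$. Your approach via the series expansion and Parseval is more elementary and entirely self-contained from \eqref{eq:w-representation}, and you are more careful than the paper in explicitly reducing $\bb{E}_i$ to $\bb{E}$ through independence of increments (the paper silently passes to $\bb{E}$). The It\^o-isometry route has the minor advantage of being a one-liner once the stochastic-integral machinery is in place, but both proofs are equally valid and of comparable length.
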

\begin{proof}
	Consider the identity mapping $I_V\colon V\to V$. By Ito isometry, one has that
	\[
	\begin{aligned}
		\bb{E}\norm{\Delta W_i}^2_V = &~{} \bb{E}\norm{\integral I_VdW_s}_V^2 = \bb{E}\integral\norm{I_V}^2_{L_2(V_0,V)}ds \\
		=&~{} h \norm{I_V}^2_{L_2(V_0,V)} = h\sum_{k=1}^{\infty}\lambda_k = \text{tr}(Q)h.
	\end{aligned} 
	\]
\end{proof}
It is useful to state and prove our main result to consider the following definition:
\begin{definition}
	For $i\in\set{0,...,N-1}$ let $(M_s)_{s\in[0,T]}$ be an integrable process and $(L_i)_{i\in\set{0,...,N-1}}$ be a set of random variables, all random objects taking values in some Hilbert $K$. We define,
	\begin{align}\label{eq:e-def}
		e_i(M,L_0) &= \bb{E}\integral \norm{M_s-L_0}_K^2 ds\qquad \text{and}\qquad e(M,L) = \sum_{i=0}^{N-1}e_i(M,L_i).
	\end{align}
	Also,
	\begin{align}\label{eq:z-barra}
		\overline{Z}_{t_i} = \frac{1}{h}\bb{E}_i\integral Z_s ds\; \in\; L^2(\Omega,\ca{F}_{t_i},\p;Q^{1/2}V).
	\end{align}
	Let $\varepsilon_i^v$, $\varepsilon_i^z$ given by
	\begin{equation}\label{errores}
		\varepsilon_i^{v,\eta}:=\underset{\theta\in\Theta_{\eta}}{\inf}\ \bb{E}|v_i(\xscheme{i}) - u^{\theta}_i(\xscheme{i})|^2, \qquad \varepsilon_i^{z,\eta}:=\underset{\theta\in\Theta_{\eta}}{\inf}\ \bb{E}\norm{z(\xscheme{i}) - z^{\theta}_i(\xscheme{i})}_0^2.
	\end{equation}
	Finally, consider
	\begin{align}\label{errores_epsilons}
		\varepsilon^{v,\eta} = \sum_{i=0}^{N-1} \varepsilon_i^v, \qquad \varepsilon^{z,\eta} = \sum_{i=0}^{N-1} \varepsilon_i^z.
	\end{align} 
\end{definition}
Previous definitions are related to the error committed in our scheme. Given the previous notation, consider the following assumptions which depends on the behavior of solution $(Y,Z)$ to stochastic equation \eqref{eq:bpsde} and how good the assumed scheme $X^{\pi}$ is.   
\begin{assumptions}\label{assumption:stochastic-sol-regularity}
	Assume that the processes $(Y,Z)\in\mathscr{S}^2_T(\R)\times\mathscr{N}_{W}(0,T;L^0_2(V,\R))$ satisfy that there exist $C>0$ and a function $\rho\colon(0,\infty)\to(0,\infty)$ such that,
	\begin{align}
		e(X,X^{\pi}) + e(Y,(Y_t)_{t\in\pi}) + e(Z,(\overline{Z}_t)_{t\in\pi})\le \rho(h),
	\end{align}
where $\rho(h)\rightarrow 0$ as $h\rightarrow  0$.
\end{assumptions}
This assumption holds in the finite dimensional case, where the control on regularity is precise and stipulated as a $\ca{O}(h)$. See e.g. \cite[Theorem $2.1$]{bruno}. Note that in general the distance used to measure the component related to $Y$ is always expressed in a $L^{\infty}$-type of distance. Meanwhile, terms related to $Z$ are measured in $L^2$-type of measure.   

\section{Universal Approximation Theorems and Deep-H-Onets}\label{sec:Universal Approximation Theorems and Deep-H-Onets}

In this section, our main objective will be to obtain precise bounds on the terms $\varepsilon_i^{v,\eta}$, $\varepsilon_i^{z,\eta}$ in \eqref{errores}. These bounds will be given in terms of infinite dimensional neural networks. Our main result for this section, Theorem \ref{theorem:infiniteapprox}, will provide the required control. First, we review some notation concerning finite dimensional NNs, we follow a slightly different notation of that given in \cite{yo}.

\subsection{Finite Dimensional Neural Networks}\label{sec:finite-dim-nn}
The NNs mathematical framework presented here is inspired by \cite{jentzen-nn-thoery}, we give a slightly simpler development that adapts to our motivations. Finite dimensional Neural Networks are building blocks to their infinite dimensional version, which we refer as Infinite Dimensional NN ($\text{NN}^{\infty}$ for short), and are also used as an intermediate step in the proof of the Universal Approximation theorem for $\text{NN}^{\infty}$. To fix ideas, in this section we focus on a setting where the input and output variables belong to multidimensional real spaces $\R^d$ and $\R^m$ respectively with $d,m\in\bb{N}$. The following definition introduce the notion of finite dimensional Neural Network with an arbitrary activation function.

\begin{definition}\label{def:NN-def}
	Consider $L+1\in\bb{N}$ as the number of layers within the network with $l_i\in\bb{N}$ neurons each for $i\in\set{0,...,L}$ where $l_0=d$ and $l_L=m$, weight matrices $\set{W_i\in\bb{R}^{l_i\times l_{i-1}}}_{i=1}^{L}$, bias vectors $\set{b_i\in \bb{R}^{l_i}}_{i=1}^{L}$, and an activation function $\sigma:\R\to\R$. Let $\theta=\set{W_i,b_i}_{i=1}^{L}$, which can be seen as an element of $\bb{R}^\kappa$ with $\kappa=\sum_{i=1}^{L}(l_i l_{i-1} + l_i)$, and a function $\sigma:\R\to\R$. We define the neural network $f^{\theta,\sigma}:\R^{l_0}\to\R^{l_L}$ as the following composition,
	\begin{align*}
		f^{\theta,\sigma}(x)=\parent{ A_L\circ\sigma\circ A_{L-1}\circ\cdots \circ A_2 \circ \sigma\circ A_1}(x),
	\end{align*}
	where $A_i:\R^{l_{i-1}}\to\R^{l_i}$ is an affine linear function such that $A_i(x)=W_ix+b_i$ for $i\in\set{1,...,L}$ and $\sigma$ is applied component-wise. One says that the function $f^{\theta,\sigma}$ is the realization of the parameter $\theta$ as a NN. Numbers $(l_i)_{i\in\set{0,...,L}}$ represents the amount of \textit{units} on each layer, note that the first layer has $l_0=d$ units and the last one has $l_L=m$ as they stand for the input and output variables respectively, the remaining $L-1$ layers are also known as hidden layers.
\end{definition}

We introduce some necessary conditions concerning activation functions. We follow the definitions given in \cite{chen95}.

\begin{definition}\label{def:TW-def}
	A function $\sigma:\R\to\R$ is called TW (Tauber-Wiener) if the set
	\begin{align*}
		\left\langle \set{\sum_{i=1}^N c_i\sigma(\lambda_i x + \theta_i)\Big| \lambda_i,\theta_i,c_i \in\R\ i\in\set{1,...,N}}\right\rangle
	\end{align*}
	is dense in $C([a,b])$ for $a,b\in\R$ and $a<b$.
\end{definition}
From the definition  it is not obvious how to determine if a function is TW, Chen and Chen \cite[Theorem $1$]{chen95} provide us with a result that makes it easier to know.
\begin{theorem}\label{theorem:tw}
	Suppose that $\sigma$ is a continuous function and that $\sigma\in S' (\R)$, the set of tempered distribution. Then, $\sigma$ is TW if and only if $\sigma$ is not a polynomial.
\end{theorem}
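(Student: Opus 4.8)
\textbf{Proof plan for Theorem \ref{theorem:tw}.}

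The plan is to derive this as a corollary of the classical characterization of universal approximators due to Leshno--Lin--Pinkus--Schocken, combined with the structure of the space $S'(\R)$ of tempered distributions. First I would recall the precise meaning of the TW property: $\sigma$ is TW exactly when the span of dilated--translated copies $\{\sigma(\lambda x + \theta) : \lambda,\theta\in\R\}$ is dense in $C([a,b])$ for every compact interval $[a,b]$. This is a one-hidden-layer density statement, so the natural tool is the theorem that, for a locally bounded (or continuous) activation, such density holds if and only if $\sigma$ is not (a.e. equal to) a polynomial. The continuity hypothesis on $\sigma$ makes the "locally bounded" technicalities disappear, so the real content is just translating "not a polynomial" into a usable form.

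The key steps, in order, are as follows. \emph{Step 1 (easy direction).} If $\sigma$ is a polynomial of degree $k$, then every linear combination $\sum_i c_i\sigma(\lambda_i x+\theta_i)$ is again a polynomial of degree at most $k$; hence the span sits inside the finite-dimensional space of polynomials of degree $\le k$, which is not dense in $C([a,b])$ (e.g. it cannot approximate $x^{k+1}$). So TW fails, giving the contrapositive of one implication. \emph{Step 2 (hard direction).} Suppose $\sigma$ is continuous, $\sigma\in S'(\R)$, and $\sigma$ is not a polynomial; we must show the span is dense in $C([a,b])$. The approach is a duality/Fourier argument: by Hahn--Banach it suffices to show that the only finite signed Borel measure $\mu$ on $[a,b]$ annihilating all functions $x\mapsto\sigma(\lambda x+\theta)$ is $\mu=0$. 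Convolving, one considers the tempered distribution $\sigma * \check\mu$ (using that $\mu$ has compact support, so the convolution with a tempered distribution is well-defined and tempered); the annihilation condition forces this convolution, together with all its dilates, to vanish. Taking Fourier transforms, $\widehat{\sigma}\cdot\widehat{\mu}$ vanishes on a set large enough (by the dilation freedom in $\lambda$) to conclude that $\widehat{\mu}$ is supported at the origin, i.e. $\mu$ is a derivative of a Dirac mass; but a nonzero such distribution is not a measure on $[a,b]$ unless it is a multiple of $\delta$, and pairing against $\delta$ with a non-polynomial $\sigma$ and varying $\lambda,\theta$ again gives a contradiction. Here the hypothesis $\sigma\in S'(\R)$ is precisely what lets the Fourier transform $\widehat\sigma$ make sense as a tempered distribution; and "$\sigma$ not a polynomial" is exactly the statement that $\widehat\sigma$ is not supported solely at $0$, which is what breaks the degenerate case.

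\emph{Step 3 (assembly).} Combining Steps 1 and 2 gives the stated equivalence for all $a<b$, and since density on every compact interval is what the definition demands, we are done. The main obstacle is Step 2: making the Fourier/duality argument rigorous requires care about which distributions can be convolved, about the support of $\widehat\mu$ when $\mu$ is a compactly supported measure (so $\widehat\mu$ is real-analytic, entire of exponential type by Paley--Wiener), and about correctly exploiting the dilation parameter $\lambda$ to sweep out enough frequencies. In practice one cites \cite[Theorem~1]{chen95} (which in turn rests on the Leshno et al. result) rather than reproving it; the role of this theorem in our paper is purely to supply concrete TW activations (any continuous non-polynomial, e.g. ReLU, sigmoid, tanh), so a short proof that invokes the classical result and then checks Step 1 explicitly suffices for our purposes.
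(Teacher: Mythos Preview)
The paper does not prove this theorem at all: it is stated and attributed to Chen and Chen \cite[Theorem~1]{chen95} with no argument given. Your proposal therefore goes well beyond the paper, supplying an actual proof sketch where the paper is content with a citation. Since you yourself note in Step~3 that ``in practice one cites \cite[Theorem~1]{chen95} \ldots\ rather than reproving it,'' your plan is entirely compatible with the paper's treatment; you simply add substance the paper omits.

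One small correction in your Step~2: when you conclude that $\widehat{\mu}$ is supported at the origin, the inference is not that ``$\mu$ is a derivative of a Dirac mass.'' Rather, $\mu$ is a compactly supported finite signed measure, so $\widehat{\mu}$ is an entire function (Paley--Wiener), and a continuous function supported at a single point is identically zero; hence $\widehat{\mu}=0$ and $\mu=0$ directly. (Equivalently: a tempered distribution with $\widehat{\mu}$ supported at $0$ must be a polynomial, and the only polynomial that is also a compactly supported measure is zero.) With that adjustment the duality/Fourier argument is the standard one underlying both Chen--Chen and Leshno--Lin--Pinkus--Schocken, and your easy direction (Step~1) is correct as written.
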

In this paper we work with an activation function known as ReLu denoted by $\sigma_{\text{ReLu}}\colon\R\to\R$ and is such that $\sigma_{\text{ReLu}}(x)=\max(x,0)$ for all $x\in\R$. We can see that this function satisfies hypothesis of Theorem \ref{theorem:tw}. In the following we make a formal definition of neural network and the set of parameters that defines them.

%\begin{align*}
%\int |\sigma_R \phi| &=\int_{[-R,R]} |x| |\phi| + |R|\int_{[-R,R]^{c}} |\phi| \le R\int_{[-R,R]}  |\phi| + R\int_{[-R,R]^c} \frac{|x|^2}{|x|^2}|\phi|\\
%&\le \norm{\phi}_{S'(\R)}\parent{2R^2 + R\int_{[-R,R]} \frac{1}{|x|^2}}.   
%\end{align*}
%Which proves that every ReLu activation function is TW. Now we define finite dimensional NNs.

\begin{definition}\label{def:finite-dim-nn}
	The set of parameters of Neural Networks associated to $l_0=d, l_L=m\in\bb{N}$ and a function $\sigma:\R\to\R$ is defined by,
	\begin{align*}
		\ca{N}_{\sigma,L,d,m} = \bigcup_{\kappa\in\bb{N}}\ca{N}_{\sigma,L,d,m,\kappa}
	\end{align*}
	where,
	\begin{align*}
		\ca{N}_{\sigma,L,d,m,\kappa} =  \Big\{ ~ \theta\in\R^{\kappa} ~ \Big| ~ &\theta=\set{W_i,b_i}_{i=1}^{L}, ~ l_0=d, ~ l_L=m, ~ W_i\in\R^{l_i\times l_{i-1}}, ~ b_i\in\R^{l_i},~ l_i\in\bb{N},\\
		&i\in\set{1,...,L},~\kappa= \sum_{i=1}^L (l_i l_{i-1} + l_i) \Big\}.
	\end{align*}
	Naturally,
	\begin{align*}
		\ca{N}_{\sigma,d,m,\kappa} = \bigcup_{L\in\bb{N}}\ca{N}_{\sigma,L,d,m,\kappa}\quad\text{and}\quad\ca{N}_{\sigma,d,m} = \bigcup_{L\in\bb{N}}\bigcup_{\kappa\in\bb{N}}\ca{N}_{\sigma,L,d,m,\kappa}
	\end{align*}
	Note that a parameter is eliminated when the union is taken over that parameter. For a set of parameters $\ca{N}\in\set{\ca{N}_{\sigma,d,m},\ca{N}_{\sigma,L,d,m},\ca{N}_{\sigma,d,m,\kappa}}$, the set of Neural Networks is then defined by,
	\begin{align*}
		\ca{R}(\ca{N}) = \set{f^{\theta,\sigma}\ \Big| \theta\in \ca{N}}.
	\end{align*}
	Here $f^{\theta,\sigma}:\Rd\to\R^m$.  
\end{definition}
Now, for completeness,we present two basic but important results. The first shows that NNs have a growth that is controlled by its parameters and activation function and the second that the composition of two NNs produce another NN bellowing to certain space $\ca{N}_{\sigma,L,d,m}$.
\begin{lemma}\label{lemma:nn-bound}
	Assume that $|\sigma(x)|\le|x|$ for any $x\in\R$. Let $\theta\in\ca{N}_{\sigma,2,d,m}$ such that $\theta=\set{W_1,b_1,W_2,b_2}$. Then there exist positive constants $c_1,c_2$, depending on $\theta$, such that,
	\begin{align*}
		\norm{f^{\theta,\sigma}(x)}^2 \le c_1\norm{x}^2 + c_2,\ \forall x\in\Rd.
	\end{align*}
	\begin{proof}
		Let $A\in\R^{m\times n},$ here we denote $\norm{A}^2=\sum_{i=1,j=1}^{m,n}A_{i,j}^2$, the Frobenius matrix norm. First we note that the function $f^{\theta,\sigma}$ takes the following form
		\begin{align*}
			f^{\theta,\sigma}(x) = \parent{\sum_{i=1}^n W_{2,k,i}\sigma\Big(\sum_{j=1}^d W_{1,i,j}x_j + b_{1,i}\Big) + b_{2,k}}_{k=1}^m.
		\end{align*}
		By a series of elemental computation and the application of Cauchy-Schwart inequality, we get,
		\begin{align*}
			\norm{f^{\theta,\sigma}(x)}^2 \le 4\norm{x}^2\norm{W_2}^2\norm{W_1}^2 + 4 \norm{W_2}^2\norm{b_1}^2 + 2 \norm{b_2}^2.
		\end{align*}
		Defining $c_1=4\norm{W_2}^2\norm{W_1}^2$ and $c_2=4 \norm{W_2}^2\norm{b_1}^2 + 2 \norm{b_2}^2$, we establish the required bound.
	\end{proof}
\end{lemma}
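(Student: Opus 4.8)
The plan is to unfold the two-layer realization into its explicit form and then estimate it by repeated use of the elementary inequality \eqref{eq:square-bound} with $k=2$ (i.e. $\norm{u+v}^2\le 2\norm{u}^2+2\norm{v}^2$ applied coordinatewise), together with the Cauchy--Schwarz inequality. Writing $\theta=\set{W_1,b_1,W_2,b_2}$ with $W_1\in\R^{l_1\times d}$, $b_1\in\R^{l_1}$, $W_2\in\R^{m\times l_1}$ and $b_2\in\R^m$, Definition \ref{def:NN-def} gives $f^{\theta,\sigma}(x)=W_2\,\sigma(W_1x+b_1)+b_2$, with $\sigma$ acting componentwise. The first preliminary step I would record is the matrix bound $\norm{Av}^2\le\norm{A}^2\norm{v}^2$ valid for any $A\in\R^{p\times q}$ and $v\in\R^q$, where $\norm{A}^2=\sum_{i,j}A_{i,j}^2$ is the Frobenius norm; this follows at once from Cauchy--Schwarz applied to each coordinate $(Av)_i=\sum_j A_{i,j}v_j$ and summing over $i$.

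Next I would peel off the outer affine map. Using \eqref{eq:square-bound} and then the Frobenius bound,
\[
\norm{f^{\theta,\sigma}(x)}^2\le 2\norm{W_2\,\sigma(W_1x+b_1)}^2+2\norm{b_2}^2\le 2\norm{W_2}^2\norm{\sigma(W_1x+b_1)}^2+2\norm{b_2}^2.
\]
The one place the structural hypothesis on the activation enters is the componentwise contraction $|\sigma(t)|\le|t|$, which yields
\[
\norm{\sigma(W_1x+b_1)}^2=\sum_{i}|\sigma((W_1x+b_1)_i)|^2\le\sum_{i}|(W_1x+b_1)_i|^2=\norm{W_1x+b_1}^2 .
\]
A second application of \eqref{eq:square-bound} and of the Frobenius bound then gives $\norm{W_1x+b_1}^2\le 2\norm{W_1}^2\norm{x}^2+2\norm{b_1}^2$.

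Combining the three estimates yields
\[
\norm{f^{\theta,\sigma}(x)}^2\le 4\norm{W_2}^2\norm{W_1}^2\norm{x}^2+4\norm{W_2}^2\norm{b_1}^2+2\norm{b_2}^2 ,
\]
so the claim holds with $c_1=4\norm{W_2}^2\norm{W_1}^2$ and $c_2=4\norm{W_2}^2\norm{b_1}^2+2\norm{b_2}^2$ (if one insists on strict positivity of $c_1$, note that the inequality is preserved when $c_1$ is replaced by any larger number, so one may take $c_1>0$ freely). I do not expect any genuine obstacle: the argument is a short chain of norm estimates, and the only nontrivial inputs are the operator-to-Frobenius norm inequality and the componentwise bound on $\sigma$; the restriction to $L=2$ layers is merely a convenience, the same induction working for arbitrary depth.
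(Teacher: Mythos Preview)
Your proof is correct and follows essentially the same approach as the paper's: both write the two-layer network explicitly, apply the Frobenius/Cauchy--Schwarz bound $\norm{Av}^2\le\norm{A}^2\norm{v}^2$, the componentwise contraction $|\sigma(t)|\le|t|$, and the elementary inequality \eqref{eq:square-bound}, arriving at the identical constants $c_1=4\norm{W_2}^2\norm{W_1}^2$ and $c_2=4\norm{W_2}^2\norm{b_1}^2+2\norm{b_2}^2$. Your write-up is in fact more detailed than the paper's, which simply says ``by a series of elemental computations and the application of Cauchy--Schwarz''.
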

From the last lemma is straightforward that for $p\ge 2$,  $\norm{f^{\theta,\sigma}(x)}^p \le c_1\norm{x}^p + c_2$ for any $x\in\Rd$.
\begin{lemma}\label{lemma:nn-composition}
	Let $f^{\gamma}\in\ca{R}(\ca{N}_{\sigma,M,m,n})$ and $f^{\theta}\in\ca{R}(\ca{N}_{\sigma,L,d,m})$, then $f^{\gamma}\circ f^{\theta}\in\ca{R}(\ca{N}_{\sigma,L+M,d,n})$.
	\begin{proof}
		Let,
		\begin{align*}
			f^{\gamma} = B_M\circ\sigma\cdots\sigma\circ B_1\\
			f^{\theta} = A_L\circ\sigma\cdots\sigma\circ A_1.
		\end{align*}
		Then,
		\begin{align*}
			f^{\gamma}\circ f^{\theta} = B_M\circ\sigma\cdots\sigma\circ B_1\circ A_L\circ\sigma\cdots\sigma\circ A_1.
		\end{align*}
		Therefore the composition produce an additive property on the number of layers and $f^{\gamma}\circ f^{\theta}\in\ca{R}(\ca{N}_{\sigma,L+M,d,n})$.
	\end{proof}
\end{lemma}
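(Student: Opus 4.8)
The statement to prove is Lemma~\ref{lemma:nn-composition}: if $f^{\gamma}\in\ca{R}(\ca{N}_{\sigma,M,m,n})$ and $f^{\theta}\in\ca{R}(\ca{N}_{\sigma,L,d,m})$, then $f^{\gamma}\circ f^{\theta}\in\ca{R}(\ca{N}_{\sigma,L+M,d,n})$.

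The plan is to unfold both networks into their explicit compositional form from Definition~\ref{def:NN-def} and observe that concatenating them produces exactly a network of the prescribed depth. First I would write $f^{\theta}=A_L\circ\sigma\circ A_{L-1}\circ\cdots\circ\sigma\circ A_1$ with each $A_i\colon\R^{l_{i-1}}\to\R^{l_i}$ affine, $l_0=d$, $l_L=m$, and $f^{\gamma}=B_M\circ\sigma\circ B_{M-1}\circ\cdots\circ\sigma\circ B_1$ with each $B_j\colon\R^{k_{j-1}}\to\R^{k_j}$ affine, $k_0=m$, $k_M=n$. Then the composition is
\begin{align*}
	f^{\gamma}\circ f^{\theta}=B_M\circ\sigma\circ B_{M-1}\circ\cdots\circ\sigma\circ B_1\circ A_L\circ\sigma\circ A_{L-1}\circ\cdots\circ\sigma\circ A_1.
\end{align*}

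The one genuine (if minor) point that needs care is the junction $B_1\circ A_L$: between the last affine map of $f^\theta$ and the first affine map of $f^\gamma$ there is \emph{no} activation, and the composition of two affine maps $B_1\circ A_L$ is again an affine map $\R^{l_{L-1}}\to\R^{k_1}$ (with weight $B_1^{W}A_L^{W}$ and bias $B_1^{W}A_L^{b}+B_1^{b}$, where I write each affine map as $x\mapsto (\cdot)^W x+(\cdot)^b$). Replacing $B_1\circ A_L$ by this single affine map $\widetilde A$, the composition becomes
\begin{align*}
	f^{\gamma}\circ f^{\theta}=B_M\circ\sigma\circ\cdots\circ\sigma\circ B_2\circ\widetilde A\circ\sigma\circ A_{L-1}\circ\cdots\circ\sigma\circ A_1,
\end{align*}
which is precisely the realization of a network with $L+M$ affine maps interleaved with $L+M-1$ activations, having layer widths $d=l_0,l_1,\dots,l_{L-1},k_1,\dots,k_{M-1},k_M=n$. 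Hence it belongs to $\ca{R}(\ca{N}_{\sigma,L+M,d,n})$ after declaring the corresponding parameter vector $\theta'=\{\widetilde W_i,\widetilde b_i\}_{i=1}^{L+M}$ (with the obvious relabeling of the $A$'s, $\widetilde A$, and $B$'s) and noting $\theta'\in\ca{N}_{\sigma,L+M,d,n}$ since all the new weight matrices and bias vectors have compatible dimensions.

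I do not expect any real obstacle here; the lemma is essentially a bookkeeping statement about composition of the affine/activation alternation. The only thing worth stating explicitly, to keep the proof honest, is that merging $B_1\circ A_L$ into one affine layer is what makes the depth add to exactly $L+M$ rather than $L+M+1$ — this is the reason the claim is stated with $L+M$ layers and it matches the convention in Definition~\ref{def:NN-def} that consecutive affine maps are not separated by activations at the network boundary. Everything else is substitution into the definitions.
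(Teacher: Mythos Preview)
Your approach is essentially the paper's --- write out both networks in their alternating affine/activation form and concatenate --- but you go one step further by explicitly treating the junction $B_1\circ A_L$, which the paper's proof simply leaves as two consecutive affine maps without comment.

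That extra care, however, introduces an arithmetic slip. After merging $B_1\circ A_L$ into a single affine map $\widetilde A$, the remaining affine maps are $A_1,\dots,A_{L-1},\widetilde A,B_2,\dots,B_M$, which is $(L-1)+1+(M-1)=L+M-1$ maps, not $L+M$; under Definition~\ref{def:NN-def} this places the composition in $\ca{R}(\ca{N}_{\sigma,L+M-1,d,n})$ rather than $\ca{R}(\ca{N}_{\sigma,L+M,d,n})$. (Your displayed formula is also missing a $\sigma$ between $B_2$ and $\widetilde A$.) The paper's count of $L+M$ comes from \emph{not} merging: Definition~\ref{def:param-concatenation} just concatenates the two parameter lists into $L+M$ affine entries. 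Strictly speaking, the realization of that concatenated parameter under Definition~\ref{def:NN-def} would insert an extra $\sigma$ between $A_L$ and $B_1$, so it is not literally $f^{\gamma}\circ f^{\theta}$; the paper glosses over this. Your merging argument is the cleaner fix, but it lands one layer shallower than the lemma claims.
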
  
Previous lemma hints that the composition of NNs translate as a concatenation operation for its parameters, we introduce this notion in Definition \ref{def:param-concatenation}:
\begin{definition}\label{def:param-concatenation}
	For $\sigma,d,m$ we define the concatenation of parameters $\circ\colon\ca{N}_{\sigma,M,m,n}\times\ca{N}_{\sigma,L,d,m}\to\ca{N}_{\sigma,L+M,d,n}$ as,
	\begin{align}\label{eq:composition}
		\set{V_i,c_i}_{i=1}^M\circ\set{W_i,b_i}_{i=1}^L = \set{W_1,b_1,...,W_L,b_L,V_1,c_1,\dots,V_M,c_M}.
	\end{align}
	Then we have that for $\theta\in\ca{N}_{\sigma,L,d,m}$ and $\gamma\in\ca{N}_{\sigma,M,m,n}$ $f^{\theta}\circ f^{\gamma} = f^{\theta\circ\gamma}$.
\end{definition}
\begin{remark}
	Note that the order of composition at the left side of equation \eqref{eq:composition} differs from that of the right side. This is because the composition of functions is written in the opposite direction to the flow in a neural network (left to right).
\end{remark}
If the activation function $\sigma$ is continuous, the elements in $\ca{R}(\ca{N}_{\sigma,d,m})$ are continuous functions bellowing to $C(\Rd;\R^m)$. This is because they are composition of continuous mappings itself. Definition \ref{def:finite-dim-nn} is general, the first approximation theorem presented here is written a subset $\ca{H}$ of $\ca{N}_{\sigma,2,d,1}$ defined by
\begin{align}\label{eq:H}
	\ca{H}=\ca{N}_{\sigma,2,d,1}\cap\set{\theta\in\ca{N}_{\sigma,2,d,1} ~ | ~ \theta=\set{W_1,b_1,W_2}\in\R^{nd+n+n},\ b_2=0,\ n\in\bb{N}}.
\end{align}
Note that in this definition the free parameter $\kappa$ from definition \ref{def:finite-dim-nn} depends on the size $n\in\bb{N}$ of the first (and only) hidden layer in the following way, $\kappa=\sum_{i=1}^2 (l_i l_{i-1} + l_i) = nd + n + n + 1$. It is straightforward that a function $f^{\theta,\sigma}\in\ca{R}(\ca{H})$, set of real-valued mappings, takes the following form
\begin{align*}
	f^{\theta,\sigma}(x) = W_2\cdot\sigma(W_1 x + b_1)=\sum_{i=1}^n W_{2,i} \sigma\parent{\sum_{j=1}^{d} W_{2,i,j} x_j + b_{1,i} },
\end{align*}
for $\theta=\set{W_1,b_1,W_2}\in\R^{nd+n+n}$, $n\in\bb{N}$ and $x\in\Rd$. Now we state the first universal approximation theorem of this paper, it is proven by Chen and Chen in \cite[Theorem $3$]{chen95} and we present it here using our notation.
\begin{theorem}\label{thm:finite-dim-uat}
	Let $K$ be a compact set in $\Rd$, $U$ a compact set in $C(K)$ and $\sigma\colon\R\to\R$ a TW activation function. Then, for all $\varepsilon>0$ there exists a parameter $\theta$ depending on $g\in U$ as $\theta(g) = \set{W_1,b_1,W_2 (g)}\in\ca{H}$ such that
	\begin{align*}
		\underset{x\in K,g\in U}{\sup} |g(x) - f^{\theta(g)}(x)| < \varepsilon.
	\end{align*}
\end{theorem}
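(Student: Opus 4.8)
The plan is to reduce the uniform approximation over two variables $(x,g)\in K\times U$ to a finite-dimensional problem by exploiting the compactness of $U$ in $C(K)$ together with the Arzelà–Ascoli theorem. First I would note that, by Arzelà–Ascoli, compactness of $U$ in $C(K)$ forces $U$ to be uniformly bounded and uniformly equicontinuous; hence for the prescribed $\varepsilon>0$ there is a $\delta>0$ such that $|g(x)-g(y)|<\varepsilon/3$ whenever $\|x-y\|<\delta$, uniformly in $g\in U$. Since $K$ is compact, cover it by finitely many balls of radius $\delta$ centered at points $x_1,\dots,x_n$, and let $\{\varphi_k\}_{k=1}^n$ be an associated partition-of-unity-type family of continuous bump functions (or simply a continuous interpolation scheme) so that the map $x\mapsto\sum_{k=1}^n g(x_k)\varphi_k(x)$ approximates $g$ uniformly to within $\varepsilon/2$, uniformly over $g\in U$. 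The crucial observation is that this interpolant is linear in the data vector $(g(x_1),\dots,g(x_n))\in\R^n$, and the functions $\varphi_k$ do \emph{not} depend on $g$; only the coefficients do.

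Next I would absorb the bump functions $\varphi_k$ into a neural network of the form in $\ca{R}(\ca{H})$. Each $\varphi_k$ is a fixed continuous function on the compact set $K$, so by the one-variable/finite-dimensional density property of TW activations (Definition \ref{def:TW-def} applied coordinatewise, or the classical Cybenko/Hornik-type density of $\langle\{\sum_i c_i\sigma(\lambda_i\cdot+\theta_i)\}\rangle$ in $C(K)$), each $\varphi_k$ can be approximated uniformly on $K$ to within $\varepsilon/(2n\,(1+\sup_{g\in U,k}|g(x_k)|))$ by a single-hidden-layer network $\sum_i W_{2,i}^{(k)}\sigma(W_{1,i}\cdot x + b_{1,i})$. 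Here is the key bookkeeping step: by \emph{sharing} the first-layer weights $W_1$ and biases $b_1$ across all $k$ (take the union of all the hidden units used for the $n$ functions $\varphi_k$, padding each individual network's output weights with zeros on the units belonging to the other networks), one obtains a \emph{single} $\theta = \{W_1,b_1,W_2(g)\}\in\ca{H}$ in which $W_1,b_1$ are universal (independent of $g$) and only the last-layer vector $W_2(g) = \sum_{k=1}^n g(x_k)\,W_2^{(k)}$ depends on $g$ — and does so linearly. The resulting realization $f^{\theta(g),\sigma}(x)$ then satisfies, by the triangle inequality, $|g(x)-f^{\theta(g),\sigma}(x)| \le |g(x)-\sum_k g(x_k)\varphi_k(x)| + |\sum_k g(x_k)(\varphi_k(x)-\text{net}_k(x))| < \varepsilon/2 + \varepsilon/2 = \varepsilon$, uniformly over $x\in K$ and $g\in U$.

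I expect the main obstacle to be the uniformity in $g$, i.e. making sure that a \emph{single} choice of the $g$-independent pieces ($W_1$, $b_1$, the number $n$ of hidden units, the interpolation nodes and bump functions) works simultaneously for every $g\in U$. This is exactly where the compactness of $U$ in $C(K)$ — hence its total boundedness and equicontinuity via Arzelà–Ascoli — is indispensable: without it the modulus of continuity $\delta(\varepsilon)$ and the uniform bound $\sup_{g\in U}\|g\|_\infty$ could degenerate, and the number of nodes $n$ could not be fixed. A secondary technical point is controlling the error amplification by the factor $\sum_k |g(x_k)| \le n\sup_{g\in U}\|g\|_\infty$ when passing from the exact bump functions $\varphi_k$ to their network approximations; this is handled simply by demanding the per-function network error be small relative to that (finite, $g$-uniform) bound. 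Everything else — the explicit construction of $\varphi_k$, the coordinatewise application of the TW property, and the zero-padding that realizes the shared-weights concatenation inside $\ca{H}$ — is routine once the uniform reduction is in place.
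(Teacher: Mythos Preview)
The paper does not prove this theorem; it simply cites \cite[Theorem~3]{chen95} and restates the result in the paper's notation. Your proposal therefore supplies an argument where the paper offers none, and your approach is essentially the classical Chen--Chen argument: use Arzel\`a--Ascoli on the compact family $U\subset C(K)$ to obtain a uniform modulus of continuity, reduce to a finite interpolation $g\approx\sum_k g(x_k)\varphi_k$ that is linear in the sample values $(g(x_k))_k$, approximate each $\varphi_k$ by a single-hidden-layer network, and concatenate the hidden units so that only the output weights $W_2$ depend on $g$. This is correct and is exactly what the cited reference does.

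One point deserves a cleaner statement. You invoke ``the classical Cybenko/Hornik-type density of $\langle\{\sum_i c_i\sigma(\lambda_i\cdot+\theta_i)\}\rangle$ in $C(K)$'' for $K\subset\R^d$, but the TW property in Definition~\ref{def:TW-def} is stated only for $C([a,b])$, $a,b\in\R$. Passing from the one-dimensional TW condition to density of $\ca{R}(\ca{H})$ in $C(K)$ for $K\subset\R^d$ is itself a (short but nontrivial) step --- it is Theorem~2 of \cite{chen95}, proved via a ridge-function/Radon-transform argument. In a self-contained write-up you should either cite that result explicitly or reproduce the argument; ``applied coordinatewise'' is not the right description, since the inner weights $W_1$ act as $x\mapsto W_1 x$ and mix coordinates. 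Apart from this citation gap, the bookkeeping (uniform bound $\sup_{g\in U}\|g\|_\infty<\infty$, zero-padding to share $W_1,b_1$, and the factor-of-$n$ error amplification) is handled correctly.
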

In particular, the latter theorem states that $\ca{R}(\ca{H})$ is dense in $C(K)$ endowed with the uniform topology in the sense that for every $\varepsilon$ there exist a NN with a sufficiently large hidden layer that meets the said accuracy in uniform distance. The following lemma extends Theorem \ref{thm:finite-dim-uat} proving the density of $\ca{R}(\ca{N}_{\sigma,2,d,m})$ in $C(K,\R^m)$ for a compact $K\subset\Rd$ and $m\ge 1$.
\begin{lemma}\label{thm:finite-dim-uat-m}
	Let $m\in\bb{N}$ with $m\ge 1$ and $K$ a compact set in $\R^d$. If the activation function $\sigma$ is TW, then $\ca{R}(\ca{N}_{\sigma,2,d,m})$ is dense in $C(K,\R^m)$.
	
	\begin{proof}
		Given $\varepsilon >0$ and a function $h=(h_1,...,h_m)\in C(K;\R^m)$ we need to find $f^{\theta,\sigma}=(f_1,...,f_m)\in\ca{R}(\ca{N}_{\sigma,2,d,m})$ such that
		\begin{align*}
			\underset{x\in K}{\sup}\norm{h(x) - f^{\theta,\sigma}(x)} < \varepsilon.
		\end{align*}
		First, observe that $\ca{R}(\ca{H})\subset \ca{R}(\ca{N}_{\sigma,2,d,1})$ which implies, by using Theorem \ref{thm:finite-dim-uat}, that $\ca{R}(\ca{N}_{\sigma,2,d,1})$ is also dense in $C(K)$ and therefore for every $i\in\set{1,...,m}$ we can find $f^{\theta^i,\sigma}$ with $\theta^i=\set{W_1^{i},b_1^i,W_2^i,b_2^i}$ and $\kappa^i=n^i d + n^i + n^i + 1$, depending on $\varepsilon$, such that
		\begin{align*}
			\underset{x\in K}{\sup} |h_i(x) - f^{\theta^i,\sigma}(x)| < \frac{\varepsilon}{\sqrt{m}}.
		\end{align*}
		Consider $\widehat{\theta}\in \ca{N}_{\sigma,2,d,m}$ with $\widehat{\theta}=\set{\widehat{W}_1, \widehat{b}_1, \widehat{W}_2, \widehat{b}_2}$ defined by
		\begin{align*}
			&\widehat{W}_1 = \begin{pmatrix}
				W_1^1\\
				\vdots\\
				W_1^m
			\end{pmatrix}\in \R^{\parent{\sum_{i=1}^m n^i} \times d},\ \  
			\widehat{b}_1 = \begin{pmatrix}
				b_1^1\\
				\vdots\\
				b_1^m
			\end{pmatrix}\in \R^{\sum_{i=1}^m n^i}\\
			&\widehat{W}_2 = \begin{pmatrix}
				W_2^{1,T} & 0      & 0\\
				0     & \ddots & 0 \\
				0     & 0      & W_2^{m,T}
			\end{pmatrix}\in \R^{m\times\sum_{i=1}^m n^i},\ \ 
			\widehat{b}_2 = \begin{pmatrix}
				b_2^1\\
				\vdots\\
				b_2^m
			\end{pmatrix}\in\R^m,
		\end{align*}
		and which satisfies that for $x\in\Rd$
		\begin{align*}
			f^{\widehat{\theta},\sigma}(x) &= \widehat{W}_2 \sigma(\widehat{W}_1 x + \widehat{b}_1) + \widehat{b}_2 = \begin{pmatrix}
				W^{1,T}_2\sigma(W_1^1 x + b_1^1) + b_2^1\\
				\vdots\\
				W^{m,T}_2\sigma(W_1^m x + b_1^m) + b_2^m
			\end{pmatrix} = \begin{pmatrix}
				f^{\theta^1,\sigma}(x)\\
				\vdots\\
				f^{\theta^m,\sigma}(x)
			\end{pmatrix}.
		\end{align*}
		Therefore, 
		\begin{align*}
			\underset{x\in K}{\sup} \norm{h(x) - f^{\widehat{\theta},\sigma}(x)} = \underset{x\in K}{\sup} \left( \sum_{i=1}^m |h_i(x) - f^{\theta_i,\sigma}(x)|^2 \right)^{1/2}< \varepsilon.
		\end{align*}
		This ends the proof.
	\end{proof}
\end{lemma}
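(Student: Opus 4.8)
The plan is to reduce the vector-valued approximation to $m$ independent scalar approximations, each handled by Theorem \ref{thm:finite-dim-uat} (or rather its immediate consequence that $\ca{R}(\ca{N}_{\sigma,2,d,1})$ is dense in $C(K)$, since $\ca{H}\subset\ca{N}_{\sigma,2,d,1}$), and then to repackage the $m$ resulting networks into a single element of $\ca{N}_{\sigma,2,d,m}$ by stacking the first-layer parameters vertically and the second-layer parameters block-diagonally. The key observation making this work is that a two-layer network with a component-wise activation does not couple the coordinates of its hidden layer except through the final linear map, so a block-diagonal choice of $\widehat W_2$ literally realizes the direct sum of the scalar networks.

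First I would fix $\varepsilon>0$ and $h=(h_1,\dots,h_m)\in C(K;\R^m)$, and for each $i\in\set{1,\dots,m}$ invoke the scalar density result to obtain $\theta^i=\set{W_1^i,b_1^i,W_2^i,b_2^i}$ with hidden width $n^i$ such that $\sup_{x\in K}|h_i(x)-f^{\theta^i,\sigma}(x)|<\varepsilon/\sqrt m$. Next I would define $\widehat\theta=\set{\widehat W_1,\widehat b_1,\widehat W_2,\widehat b_2}\in\ca{N}_{\sigma,2,d,m}$ with $\widehat W_1,\widehat b_1$ obtained by vertically stacking the $W_1^i,b_1^i$ (total hidden width $\sum_i n^i$), $\widehat W_2$ the block-diagonal matrix with blocks $W_2^{i}$ transposed (to match the $\ca{H}$-convention where $W_2\in\R^n$ acts as a row), and $\widehat b_2$ the vertical stack of the $b_2^i$. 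Then I would verify by direct computation — expanding $\widehat W_2\sigma(\widehat W_1 x+\widehat b_1)+\widehat b_2$ and using that $\sigma$ acts component-wise so the stacked hidden activations decouple — that $f^{\widehat\theta,\sigma}(x)=(f^{\theta^1,\sigma}(x),\dots,f^{\theta^m,\sigma}(x))$. Finally, $\sup_{x\in K}\norm{h(x)-f^{\widehat\theta,\sigma}(x)}=\sup_{x\in K}\bigl(\sum_{i=1}^m|h_i(x)-f^{\theta^i,\sigma}(x)|^2\bigr)^{1/2}<\bigl(m\cdot\varepsilon^2/m\bigr)^{1/2}=\varepsilon$.

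There is no real obstacle here; the only point requiring care is bookkeeping with the shapes and conventions — in particular that the scalar networks produced by Theorem \ref{thm:finite-dim-uat} live in $\ca{H}$ (so $b_2=0$ and $W_2$ is a vector), while the target class $\ca{N}_{\sigma,2,d,m}$ allows a genuine bias $\widehat b_2$, so one must transpose the $W_2^i$ blocks appropriately when assembling $\widehat W_2$. One should also note that the number of layers is preserved (both layers equal two) so that $\widehat\theta$ indeed lands in $\ca{N}_{\sigma,2,d,m}$ and not in some class with more layers. Everything else is the elementary estimate $\sum_i(\varepsilon/\sqrt m)^2=\varepsilon^2$.
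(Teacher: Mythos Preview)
Your proposal is correct and matches the paper's own proof essentially step for step: the paper also approximates each component $h_i$ to accuracy $\varepsilon/\sqrt m$ using the scalar result (via $\ca{R}(\ca{H})\subset\ca{R}(\ca{N}_{\sigma,2,d,1})$), then builds $\widehat\theta$ by vertically stacking $W_1^i,b_1^i$, placing the $W_2^{i,T}$ on the block diagonal of $\widehat W_2$, stacking the $b_2^i$, and concludes with the same $\ell^2$ estimate. Your remarks about the shape bookkeeping (transposing the $W_2^i$, preserving $L=2$) are exactly the care the paper takes as well.
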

The following lemma will be useful in the section devoted to $\text{NN}^{\infty}$, it is presented in \cite[Lemma C$.1$]{error-estimates-DOnets} as the Clipping lemma. Here we follow their proof as we need the explicit form of the NN given in the lemma. 
\begin{lemma}\label{lemma:clipping-lemma}	
	Let $\varepsilon>0$, $d\in\bb{N}$ and fix $0<R_1<R_2$. There exist a ReLu NN parameter $\theta\in\ca{N}_{\sigma_{\text{ReLu}},5,d,d}$, depending on $\varepsilon$ and $R_1$, such that
	\begin{align*}
		\begin{cases}
			\norm{f^{\theta}(x) - x} < \varepsilon, \quad \norm{x}\le R_1,\\
			\norm{f^{\theta}(x)} < R_2,\quad \forall x\in\Rd.
		\end{cases}
	\end{align*}
\end{lemma}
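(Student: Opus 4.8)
The plan is to build the clipping network explicitly as a composition of simple ReLU pieces, imitating the one-dimensional truncation $t\mapsto \mathrm{clip}(t):=\min(t,c)$ (and its symmetric version) and then apply it radially. First I would note that the one-dimensional ReLU identity $t = \sigma_{\mathrm{ReLu}}(t) - \sigma_{\mathrm{ReLu}}(-t)$ lets a two-layer ReLU network represent the identity on $\R$, and more usefully that $\min(t,c) = t - \sigma_{\mathrm{ReLu}}(t-c)$ and $\max(t,-c) = -\min(-t,c)$, so a bounded ``soft'' truncation of each coordinate is exactly representable by a small ReLU network. The subtlety is that truncating coordinatewise to $[-c,c]^d$ does not give a ball bound with the right constant, so instead I would truncate the \emph{norm}: the map $x\mapsto \tfrac{\min(\norm{x},R)}{\norm{x}}\,x$ is exactly norm-clipping. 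Since $x\mapsto\norm{x}$ is not a ReLU network (it is not piecewise linear in a single affine chart), I would instead follow the reference's route of approximating: on the compact annulus-free region $\norm{x}\le R_1$ the target is the identity, and one only needs $\norm{f^\theta(x)}<R_2$ elsewhere, with no accuracy demanded away from $\{\norm{x}\le R_1\}$.

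The concrete construction I would carry out: choose a large cube side $M>R_1$ and first apply the coordinatewise clip to $[-M,M]^d$ via the two-hidden-layer ReLU network $x\mapsto (\min(\max(x_j,-M),M))_j$; this agrees with the identity on $\{\norm{x}_\infty\le M\}\supseteq\{\norm{x}_2\le R_1\}$ when $M\ge R_1$, and its image is contained in the compact cube $[-M,M]^d$, on which $x\mapsto\norm{x}_2$ is continuous. On that compact set I would then invoke Lemma~\ref{thm:finite-dim-uat-m} (universal approximation for $C(K;\R^m)$) to obtain a two-hidden-layer ReLU network $g$ with $\sup_{x\in[-M,M]^d}\norm{g(x) - \phi(x)}$ as small as desired, where $\phi(x) := \tfrac{\min(\norm{x}_2,R_1)}{\max(\norm{x}_2,\delta)}\,x$ is a fixed continuous function (with a small $\delta>0$ regularizer near the origin) that equals $x$ on $\{\norm{x}_2\le R_1\}$ up to $O(\delta)$ and has norm $\le R_1$ everywhere on the cube. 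Then a final coordinatewise clip to, say, $[-R_2/\sqrt d,\,R_2/\sqrt d]^d$ guarantees the global bound $\norm{f^\theta(x)}_2<R_2$. Composing these blocks and counting layers (two plus two plus, possibly, an extra block to absorb the $\delta$-correction and fit exactly within $5$ layers via Lemma~\ref{lemma:nn-composition} and Definition~\ref{def:param-concatenation}) yields $\theta\in\ca{N}_{\sigma_{\mathrm{ReLu}},5,d,d}$, and on $\{\norm{x}\le R_1\}$ the accumulated error is $<\varepsilon$ for $\delta$ and the approximation tolerance chosen small enough.

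The main obstacle — and the place where care is needed — is the layer budget: each ``clip'' and each application of Lemma~\ref{thm:finite-dim-uat-m} costs two hidden layers, and naive composition overshoots five. I would resolve this by observing that the first coordinatewise clip to the cube and the norm-approximation network can be \emph{merged}: one can instead directly approximate on $\R^d$ (no compactness issue for the ReLU approximant's \emph{existence}, only for its accuracy) the globally bounded continuous map $x\mapsto \tfrac{\min(\norm{x}_2,R_1)}{\max(\norm{x}_2,\delta)}x$ on the compact set $\{\norm{x}_\infty\le M\}$, accepting that outside the cube the network's behaviour is only controlled by the final clip. Thus the structure is: (i) one coordinatewise clip to $[-M,M]^d$ (two layers); (ii) one approximation network for $\phi$ valid on that cube (two layers); (iii) one final coordinatewise clip to enforce $\norm{\cdot}_2<R_2$ (which, since it is again a two-layer piece, must be fused into (ii)'s last affine map — concatenation of affine maps is affine, costing no extra layer). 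A careful bookkeeping of which affine maps fuse is exactly the explicit-form requirement the lemma's statement (and the reference's proof) demands, and it is the only genuinely technical point; everything else is the triangle inequality and the elementary ReLU identities above.
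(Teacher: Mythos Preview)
Your core idea matches the paper's: compose a coordinatewise clip to a cube with a two-layer ReLU approximation (via Lemma~\ref{thm:finite-dim-uat-m}) of the radial projection $\phi$. But you overcomplicate in two places, and this is exactly what creates your layer-budget headache.

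First, the $\delta$-regularization is unnecessary: the map $\phi(x)=x$ for $\norm{x}\le R_1$ and $\phi(x)=R_1\, x/\norm{x}$ otherwise is already continuous everywhere (in particular at the origin, where it equals $x$), so you can approximate it directly on the cube.

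Second --- and this is the point you are missing --- your final clip (block~(iii)) is not needed at all. The paper takes the cube to be $[-R_1,R_1]^d$ itself (not a larger $[-M,M]^d$). Since the first clip $\gamma$ maps \emph{all} of $\R^d$ into this cube, and since $\norm{\phi(y)}\le R_1$ for every $y$ in the cube, the two-layer approximant $f^{\theta_\varepsilon}$ satisfies
\[
\norm{f^{\theta_\varepsilon}(y)}\le\norm{f^{\theta_\varepsilon}(y)-\phi(y)}+\norm{\phi(y)}<\varepsilon+R_1
\]
for every $y\in[-R_1,R_1]^d$, hence $\norm{f^\theta(x)}<\varepsilon+R_1<R_2$ for all $x\in\R^d$, after assuming without loss of generality that $\varepsilon<R_2-R_1$. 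So the global bound comes for free from the approximation step. The whole construction is then just $\gamma$ (three affine layers, $A_3\circ\sigma\circ A_2\circ\sigma\circ A_1$, via the explicit formula $\gamma(x)=-\max(-\max(x+\vec{R_1},0)+2\vec{R_1},0)+\vec{R_1}$) followed by the two-layer approximant, giving exactly five layers by Lemma~\ref{lemma:nn-composition}, with no fusion tricks required.

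As for your proposed fix: fusing block~(iii) into block~(ii) does not work as you describe. A coordinatewise clip needs two ReLU activations, not one, so merging adjacent affine maps at a block boundary saves at most one layer per junction; with your three blocks you would still exceed five.
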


\begin{remark}
	The previous lemma is used in the proof of more general universal approximation theorems (See the following section), therefore it force us to stick to ReLu NNs from now on.
\end{remark}

\begin{proof}
		For any $a\in\R$, $\vec{a}$ represents the vector $(a,\dots,a)\in\Rd$ and as we are only working with ReLu activation function, we drop the $\sigma_{\text{ReLu}}$ from the NNs notation. Without loss of generality we may assume $\varepsilon < R_2-R_1$. Consider $\gamma\colon\Rd\to[-R_1,R_1]^d$ defined for $x\in\Rd$ as $\gamma(x)=\min(\max(x,-R_1),R_1)$, which depends on $R_1$ and can be represented exactly by a ReLu NN in $\ca{N}_{\sigma_{\text{ReLu}},3,d,d}$ as, 
		\begin{align*}
			\gamma(x) =-\max \left( -\max \left(x+\vec{R_1},0 \right) + 2\vec{R_1}, 0 \right) + \vec{R_1}. 
		\end{align*}
		Taking $\theta_{\gamma} = \set{I_d,\vec{R_1},-I_d,2\vec{R_1},-I,\vec{R_1}}
		$ follows that $\gamma = f^{\theta_{\gamma}}$. Note that for any $x\in [-R_1,R_1]^d$, $f^{\theta_{\gamma}}(x) = x$. The next step is to define a continuous function $\phi\colon\Rd\to\Rd$ by,
		\begin{align*}
			\phi(x)=
			\begin{cases}
				x,\ \norm{x}\le R_1\\
				R_1 \frac{x}{\norm{x}},\ \norm{x} > R_1.
			\end{cases}
		\end{align*}
		We have that $\phi \in C([-R_1,R_1]^d)$, then, by Theorem \ref{thm:finite-dim-uat-m}, there exists $f^{\theta_{\varepsilon}}\in\ca{N}_{\sigma_{\text{ReLu}},2,d,d}$ such that,
		\begin{align*}
			\underset{x\in[-R_1,R_1]^d}{\sup} \norm{\phi(x) - f^{\theta_{\varepsilon}}(x)} < \varepsilon.
		\end{align*} 
		Define now $\theta =\theta_{\varepsilon}\circ \theta_{R_1}$, which is well defined and belong to $\ca{N}_{\sigma_{\text{ReLu}},5,d,d}$ by Lemma \ref{lemma:nn-composition} and Definition \ref{def:param-concatenation}. Then, for any $\norm{x}\le R_1$.
		\begin{align*}
			\norm{f^{\theta}(x) - x} = \norm{f^{\theta_{\varepsilon}}(f^{\theta_{\gamma}}(x)) - \phi(x)} = \norm{f^{\theta_{\varepsilon}}(x) - \phi(x)} < \varepsilon,
		\end{align*}
		and,
		\begin{align*}
			\underset{x\in\Rd}{\sup}\norm{f^{\theta}(x)} \le \underset{x\in [-R_1,R_1]^d}{\sup}\norm{f^{\theta_{\varepsilon}}(x) - \phi(x)} + R_1 < R_2. 
		\end{align*}
		This finishes the proof.
	\end{proof}

\subsection{Infinite Dimensional Neural Networks: Hilbert-valued DeepOnets}\label{sec:infinite-dim-nn}

In this section we work with a particular type of $\text{NN}^{\infty}$ called DeepOnets. Based on the definitions given in \cite{error-estimates-DOnets}, we provide a proper and rigorous treatment of this object and prove important results that allows them to be used on our PDE and stochastic setting.

\medskip

Through this entire section $(H,\prom{\cdot,\cdot}_H,\norm{\cdot}_H)$ and $(W,\prom{\cdot,\cdot}_W,\norm{\cdot}_W)$ will denote any Hilbert space with orthonormal basis $(e_i)_{i\in\bb{N}}$ and $(g_i)_{i\in\bb{N}}$ respectively, $H$ is equipped with a Borel probability measure $\mu$. In the following we are devoted to study the approximation of functionals of the form $F\colon H\to W$ by functions parameterized by finite dimensional parameters. The main idea to define such functions is to take a sufficiently large $d\in\bb{N}$ such that the approximations $\sum_{i=1}^d\prom{x,e_i}_H e_i$ are good enough to approximate $x\in H$ and encode $x$ as the vector $(\prom{x,e_1}_H,...,\prom{x,e_n}_H)\in\R^d$, then use a finite dimensional neural network to go from $\Rd$ to $\R^m$ for some $m\in\bb{N}$. At last, we take the resulting vector to $W$ by considering its $m$ components as coefficients for $\set{g_1,...,g_m}$. The structure of Hilbert spaces allow us to take advantage of results such as Lemma \ref{lemma:hilbert_aris}, which we present below with a proof due to Aris Daniilidis. Note that it is valid for every Hilbert space.

\begin{lemma}[Daniilidis]\label{lemma:hilbert_aris}
	Let $K$ be a compact set on $H$. For every $k\in\bb{N}$ consider the operator $P_k:H\to H$ defined as $P_k(x) = \sum_{i=1}^k \bra{x}{e_i}_H e_i$ for $x\in H$. Then, for every $\varepsilon>0$ there exists $k\in\mathbb{N}$ such that for all $x\in K$,
	\begin{align*}
		\norm{P_kx - x}_H \le \varepsilon. 
	\end{align*}
\end{lemma}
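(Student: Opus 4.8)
The statement is a uniform (over the compact set $K$) version of the elementary pointwise fact that $P_k x \to x$ for every $x\in H$. The plan is to combine this pointwise convergence with a compactness/covering argument. The key technical feature to exploit is that the operators $P_k$ are orthogonal projections, hence $\norm{P_k}_{L(H)}\le 1$ for all $k$, and moreover $\norm{P_k x - x}_H$ is \emph{monotone nonincreasing} in $k$ (since $\norm{P_kx-x}_H^2=\sum_{i>k}|\bra{x}{e_i}_H|^2$).

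First I would fix $\varepsilon>0$ and cover $K$ by finitely many balls: by compactness there exist $x_1,\dots,x_M\in K$ such that $K\subset\bigcup_{j=1}^M B(x_j,\varepsilon/3)$. Next, for each $j\in\set{1,\dots,M}$ the pointwise convergence $\norm{P_k x_j - x_j}_H\to 0$ gives an index $k_j$ with $\norm{P_{k_j}x_j - x_j}_H\le\varepsilon/3$; set $k=\max_{j}k_j$. By monotonicity in $k$ this single $k$ satisfies $\norm{P_k x_j-x_j}_H\le\varepsilon/3$ for all $j$. Finally, for an arbitrary $x\in K$ pick $j$ with $\norm{x-x_j}_H<\varepsilon/3$ and estimate, using $\norm{P_k}_{L(H)}\le 1$,
\[
\norm{P_k x - x}_H \le \norm{P_k(x-x_j)}_H + \norm{P_k x_j - x_j}_H + \norm{x_j - x}_H \le \tfrac{\varepsilon}{3}+\tfrac{\varepsilon}{3}+\tfrac{\varepsilon}{3}=\varepsilon,
\]
which is the claimed bound (the minor discrepancy between $<\varepsilon$ and $\le\varepsilon$ is harmless, or one starts from $\varepsilon/4$).

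The only point requiring a word of justification is the pointwise convergence $\norm{P_k x - x}_H\to 0$ for fixed $x$, which is immediate from Parseval: $\norm{x}_H^2=\sum_{i=1}^\infty|\bra{x}{e_i}_H|^2<\infty$ forces the tail $\sum_{i>k}|\bra{x}{e_i}_H|^2=\norm{P_kx-x}_H^2$ to vanish. I do not expect any real obstacle here; the ``hard part'' is merely bookkeeping — making sure one uses the \emph{same} $k$ for all centers of the cover, which is exactly where the monotonicity of $k\mapsto\norm{P_kx-x}_H$ (equivalently, the fact that $\mathrm{Ran}(P_k)$ is increasing) is used. An alternative to monotonicity, if one preferred, would be to note that $\{P_k\}$ is a uniformly bounded family converging pointwise on a total set, hence equicontinuous-type arguments apply on the compact $K$; but the monotonicity route is cleanest.
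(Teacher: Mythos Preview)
Your proposal is correct but takes a different route from the paper. The paper argues by contradiction via sequential compactness: if the conclusion fails, there is $\varepsilon>0$ and a sequence $(x_n)\subset K$ with $\norm{P_n x_n - x_n}_H\ge\varepsilon$; extracting a convergent subsequence $x_n\to x\in K$ and using $\norm{P_n}_{L(H)}\le 1$ together with $P_n x\to x$ yields a contradiction via exactly the triangle inequality you wrote. Your argument is instead direct, using total boundedness (a finite $\varepsilon/3$-net) plus the monotonicity of $k\mapsto\norm{P_k x-x}_H$ to select a single $k$ valid for all net points simultaneously. Both proofs rest on the same two ingredients (the uniform bound $\norm{P_k}\le 1$ and pointwise convergence $P_k x\to x$), so neither is deeper; yours is slightly more constructive and avoids contradiction, while the paper's version does not need the monotonicity observation because the sequential-compactness subsequence handles the index bookkeeping automatically.
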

\begin{proof}
	First, lets establish that for all $k\in\mathbb{N}$, $P_k\in L(H)$ and $\norm{P_k}_H\le 1$. $P_k$ is clearly linear, to prove the bound let $x$ be any non-zero vector in $H$,
	\begin{align*}
		\norm{P_k x}_H^2 = \norm{\sum_{i=1}^k \bra{x}{e_i}_He_i }_H^2 = \sum_{i=1}^k |\bra{x}{e_i}_H|^2 \le \sum_{i=1}^{\infty} |\bra{x}{e_i}_H|^2 = \norm{x}_H^2.
	\end{align*}
	This means that $\norm{P_k}_{L(H)}\le 1$.\\
	
	We argue by contradiction. Suppose that there exists $\varepsilon >0$ such that for all $n\in\mathbb{N}$ we can find $x_n\in K$ verifying $\norm{P_n(x_n) - x_n}_H\ge \varepsilon$. Due to the compactness of $K$, there is a subsequence that converges to some $x\in H$, we denote this subsequence as $x_n$ as well. Then,
	\begin{align*}
		\norm{P_n(x_n) - x_n}_H&\le \norm{P_n(x_n) - P_n(x)}_H + \norm{P_n(x) - x}_H + \norm{x - x_n}_H\\
		&\le 2\norm{x_n - x}_H + \norm{P_n(x) - x}_H.
	\end{align*}
	The first term can be made as small as we want due to the convergence of $x_n$ to $x$ and the second because we have that $P_n(x)\to x$ in $H$ as $n\to\infty$. Then, for some large $n$ we can break the bound and thus, the contradiction.
\end{proof}
From now on we fix $\sigma=\sigma_{\text{ReLu}}$. 
\begin{definition}\label{def:deeponets}
	Recall Definition \ref{def:finite-dim-nn}. Given $L,d,m\in\bb{N}$ consider the functions
		
	\noindent
	\begin{minipage}{.45\linewidth}
		\begin{align*}
			\ca{E}_{H,d}: H&\longrightarrow \Rd\\
			x &\longmapsto  \bigg(\prom{x,e_i}_H\bigg)_{i=1}^d,
		\end{align*}
	\end{minipage}%
	\begin{minipage}{.45\linewidth}
		\begin{align*}
			\widehat{\ca{E}}_{W,m}: \R^m&\longrightarrow W\\
			a &\longmapsto \sum_{i=1}^m a_i g_i.
		\end{align*}
	\end{minipage}

	\medskip

	Let $\theta\in\ca{N}_{\sigma,L,d,m}$, for $(H,d,\theta,m,W)$ we define the DeepOnet $F^{H,d,\theta,m,W}:H\to W$ by
	\begin{align}\label{eq:DO-def}
		F^{H,d,\theta,m,W} = \widehat{\ca{E}}_{W,m}\circ f^{\theta}\circ \mathcal{E}_{H,d}.
	\end{align}
	Unless is extremely necessary, we omit $H,W$ and just use $F^{d,\theta,m}$. Also, define the following sets of DeepOnets parameters,
	\begin{align*}
		\ca{N}^{H\to W}_{\sigma} &= \bigcup_{d,m\in\bb{N}} \set{d}\times\ca{N}_{\sigma,d,m}\times\set{m},\\
		\ca{N}^{H\to W}_{\sigma,L} &= \bigcup_{d,m\in\bb{N}}\set{d}\times\ca{N}_{\sigma,L,d,m}\times\set{m}.
	\end{align*}
	With $L\in\bb{N}$, observe that $\ca{N}^{H\to W}_{\sigma,L}\subset\ca{N}^{H\to W}_{\sigma}$ (the less parameters specified, the bigger the set). Let $\ca{N}=\ca{N}^{H\to W}_{\sigma}$ or $\ca{N}=\ca{N}^{H\to W}_{\sigma,L}$, it is straightforward to define,
	\begin{align*}
		\ca{R}(\ca{N}) = \set{F^{H,d,\theta,m,W}\ \Big|\ (d,\theta,m)\in\ca{N}}.
	\end{align*} 
\end{definition}
Note that $d$ is not readable as an input dimension, here it becomes a parameter of the DeepOnet and represents how many elements of the base $(e_i)_{i\in\bb{N}}$ we are using to project with in order to get the finite dimensional representation $(\prom{x,e_i}_H)_{i=1}^d$ for $x\in H$. Last action is carried out by mapping $\ca{E}_{H,d}$. The same goes for $m$ but in the opposite direction and in this case, it is done by $\widehat{\ca{E}}_{W,m}$, which allows us to take a collection of real numbers to a Hilbert space. Observe that functions in $\ca{R}(\ca{N})$ are continuous because they are composition of continuous functions itself.
\begin{remark}\label{remark:DO-def}
	We remark the following,
	
	\begin{itemize}
		\item We have that $\ca{E}_{\Rd,d}=I_d$ and $\widehat{\ca{E}}_{\Rd,d}=I_d$. Note that with this consideration we recover the finite dimensional theory by taking $H=\Rd$ and $W=\R^m$.
		
		\item We could just denote $F^{d,\theta,m}$ as $F^{\theta}$ because the information about the input and output dimension of the NN is codified in the parameter $\theta$, but we decide to specify $d,m$ for a better understanding. Also the order of the parameters makes clearer in which order the composition are taken.
		
		\item Note that the number of parameters to define a DeepOnet is the same as of NNs only adding $d,m$.
		
		\item If $H$ is a functional space such as $L^2(\Rd,\borel,dx)$, DeepOnets also admits a \comillas{neural network representation} where the first layer is in some sense dense as has an infinite number of units which are all captured by $\prom{\cdot,\cdot}$ to be transferred to the next finite layer. 
	\end{itemize}
\end{remark}
%\begin{lemma}\label{lemma:DO-composition-stability}
%	The set $\ca{R}(\ca{N}^{\infty}_{\sigma})$ is stable under left composition with elements on $\ca{R}(\ca{N}_{\sigma,1,1})$.
%\end{lemma}
%\begin{proof}
%	The result follows from Lemma \ref{lemma:nn-composition} together with definitions \ref{def:deeponets} and \ref{def:param-concatenation}. Indeed, let $(\theta,d)\in\ca{N}^{\infty}_{\sigma}$ and $\gamma\in\ca{N}_{\gamma,1,1}$ then,
%	\begin{align*}
%		f^{\gamma}\circ F^{\theta,d} =  f^{\gamma}\circ f^{\theta}\circ\ca{E}^d = f^{\gamma\circ\theta}\circ\ca{E}^d=F^{\gamma\circ\theta,d}.
%	\end{align*}
%	With $(\gamma\circ\theta,d)\in\ca{N}^{\infty}_{\sigma}$ and the dimensions fits because the input and output dimension of $f^{\gamma}$ are $1$. Thus, the lemma follows.
%\end{proof}
\begin{prop}[See e.g. Theorem $4$ in Chen and Chen \cite{chen95}]\label{prop:functional-sup-uat}
	Let $m\in\bb{N}$, $K\subset H$ be a compact set and $f:K\to \R^m$ be a continuous function. Then, for any $\varepsilon>0$ there exists $(d,\theta,m)\in\ca{N}^{H\to\R^m}_{\sigma,2}$ such that,
	\begin{align*}
		\underset{x\in K}{\sup} \norm{F^{d,\theta,m}(x)-f(x)} \le \varepsilon.
	\end{align*}
	In other words, $\set{F|_K\colon F\in\ca{R}(\ca{N}^{\infty}_{\sigma,2})}$ is dense in $C(K)$ endowed with the uniform norm.
\end{prop}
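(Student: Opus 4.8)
The plan is to combine the Daniilidis projection lemma (Lemma \ref{lemma:hilbert_aris}) with the finite-dimensional universal approximation result (Lemma \ref{thm:finite-dim-uat-m}), in the spirit of the functional approximation theorems of Chen and Chen. Since the target space is $W=\bb{R}^m$, we have $\widehat{\ca{E}}_{\bb{R}^m,m}=I_m$ by Remark \ref{remark:DO-def}, so a DeepOnet with parameter $(d,\theta,m)$ acts simply as $F^{d,\theta,m}=f^{\theta}\circ\ca{E}_{H,d}$, where $\ca{E}_{H,d}(x)=(\prom{x,e_i}_H)_{i=1}^d$ encodes $x$ by its first $d$ coordinates and $f^{\theta}\colon\Rd\to\R^m$ is an ordinary two-layer network.

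Fix $\varepsilon>0$. First I would extend $f$ to a continuous map $\bar{f}\colon H\to\R^m$ via the Tietze extension theorem (applied componentwise; $K$ is compact, hence closed, in the metric space $H$). By compactness of $K$ together with continuity of $\bar{f}$, a finite-subcover argument produces $\delta>0$ such that $\norm{\bar{f}(y)-\bar{f}(x)}<\varepsilon/2$ whenever $x\in K$ and $\norm{y-x}_H<\delta$. Next, invoke Lemma \ref{lemma:hilbert_aris} to choose $d\in\bb{N}$ with $\norm{P_d x-x}_H<\delta$ for every $x\in K$, where $P_d x=\sum_{i=1}^d\prom{x,e_i}_H e_i$; hence $\norm{\bar{f}(P_d x)-f(x)}<\varepsilon/2$ for all $x\in K$. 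The set $\widetilde{K}:=\ca{E}_{H,d}(K)$ is compact in $\Rd$ (continuous image of a compact set, as $\ca{E}_{H,d}$ is bounded and linear), and the function $g\colon\Rd\to\R^m$ given by $g(a):=\bar{f}\parent{\sum_{i=1}^d a_i e_i}$ is continuous and satisfies $g(\ca{E}_{H,d}(x))=\bar{f}(P_d x)$ for $x\in K$.

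Now apply Lemma \ref{thm:finite-dim-uat-m} to the continuous function $g$ restricted to the compact set $\widetilde{K}\subset\Rd$: there is $\theta\in\ca{N}_{\sigma,2,d,m}$ with $\underset{a\in\widetilde{K}}{\sup}\norm{f^{\theta}(a)-g(a)}<\varepsilon/2$. Then $(d,\theta,m)\in\ca{N}^{H\to\R^m}_{\sigma,2}$, and for every $x\in K$,
\[
\norm{F^{d,\theta,m}(x)-f(x)}\le\norm{f^{\theta}(\ca{E}_{H,d}(x))-g(\ca{E}_{H,d}(x))}+\norm{\bar{f}(P_d x)-\bar{f}(x)}<\frac{\varepsilon}{2}+\frac{\varepsilon}{2}=\varepsilon,
\]
which gives the estimate; the density statement then follows at once.

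The step I expect to require the most care is the passage from $f$, defined only on $K\subset H$, to a genuine continuous function on $\widetilde{K}\subset\Rd$: the encoder $\ca{E}_{H,d}$ need not be injective on $K$, so $f$ cannot simply be pushed forward, and this is precisely what forces the detour through a global continuous extension $\bar{f}$ together with the quantitative bound $\norm{P_d x-x}_H<\delta$ supplied by Lemma \ref{lemma:hilbert_aris} — the point where the Hilbert structure (an orthonormal basis and orthogonal projections of norm at most $1$) is genuinely used. Everything else reduces to a routine $\varepsilon/2+\varepsilon/2$ estimate.
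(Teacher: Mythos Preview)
Your argument is correct and follows the same overall strategy as the paper: combine Lemma \ref{lemma:hilbert_aris}, a Tietze extension, and the finite-dimensional result Lemma \ref{thm:finite-dim-uat-m} in an $\varepsilon/2+\varepsilon/2$ estimate. The one structural difference is where the extension is taken. The paper extends $f$ only to the set $A=\bigl(\bigcup_k P_{n_k}(K)\bigr)\cup K$ and then spends most of its proof showing that $A$ is compact in $H$, so that $f_{\text{ex}}$ is automatically uniformly continuous on $A$; this delivers the required $\delta$ directly. You instead extend to all of $H$ by Tietze and obtain the needed modulus of continuity near $K$ by a finite-subcover argument on $K$. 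Your route avoids the compactness proof for $A$ and is a bit shorter; the paper's route makes the uniform-continuity step immediate once compactness is in hand. Both arrive at the same finite-dimensional function $g(a)=\bar f\bigl(\sum_i a_i e_i\bigr)$ on $\ca{E}_{H,d}(K)$, and your remark that the non-injectivity of $\ca{E}_{H,d}$ on $K$ is what forces the extension is exactly the point.
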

\begin{proof}
	Consider the operators $P_k$ from Lemma \ref{lemma:hilbert_aris}. The said Lemma tells us that for $\delta_k\searrow0$ we can find a set of natural numbers $(n_k:=n(\delta_k))_{k\in\bb{N}}$ such that,
	\begin{align*}
		\forall k\in\mathbb{N},\ \forall u\in K, \norm{P_{n_k}(u) - u}_H < \delta_k.
	\end{align*}
	Given the continuity of $P_k$, $P_k(K)$ is also a compact set in $H$ for all $k\in\mathbb{N}$. Now we prove that the set
	\begin{align*}
		A:=\parent{\bigcup_{k=1}^\infty P_{n_k}(K)}\cup V,
	\end{align*}
	is also compact in $H$. Indeed, let $(x_i)_{i\in\bb{N}}$ be a sequence in $A$. If there exists a subsequence such that it remains in $K$, there is nothing to prove because $K$ is compact. The other case is that we can extract an infinite subsequence that lies in the infinite union. This means that there exists $(k_i)_{i\in\bb{N}}\subset \mathbb{N}$ and $(u_i)_{i\in\bb{N}} \subset K$ such that,
	\begin{align*}
		x_i = \sum_{j=1}^{n_{k_i}} \bra{u_i}{e_j}e_j.
	\end{align*}
	Due to compactness of $K$, up to a subsequence that we also denote $(u_i)_{i\in\bb{N}}$ as well, $(u_i)_{i\in\bb{N}}$ converges to some $u\in K$. We have two options, the first is that the sequence $(k_i)_{i\in\bb{N}}$ does not grow to infinite when $i\nearrow \infty$ and thus, up to a subsequence on $i$, we can find $\iota$ such that $\forall i\ge \iota$, $k_i=k_{\iota}$ which implies that, for $i\ge \iota$,
	\begin{align*}
		x_i = \sum_{j=1}^{n_{k_{\iota}}} \bra{u_i}{e_j}_H e_j\longrightarrow\sum_{j=1}^{n_{k_{\iota}}} \bra{u}{e_j}_H e_j\in P_{n_{k_{\iota}}}\subset A.
	\end{align*}
	The second option is that up to a subsequence, $k_i\nearrow \infty$ as $i\nearrow\infty$, note that
	\begin{align*}
		x_i = \sum_{j=1}^{n_{k_i}} \bra{u_i}{e_j}_H e_j = P_{n_{k_{i}}} (u_i),
	\end{align*}
	and then,
	\begin{align*}
		\norm{x_i - u}_H \le \norm{P_{n_{k_{i}}} (u_i) - u_i}_H + \norm{u_i - u}_H\le \delta_{k_{i}} + \norm{u_i - u}_H,
	\end{align*}
	where, taking $i\to \infty$ we prove that, up to a subsequence, $x_i\longrightarrow u\in K\subset A$. Thus, $A$ is compact in $H$. \\
	
	The next step is to use the well-known Tietze-Urysohn theorem \cite[Chapter $4$, Theorem $35.1$]{munkres} which gives us a continuous extension $f_{\text{ex}}:A\to \R^m$ with $f_{\text{ex}}(x) = f(x)$ for $x\in K$. The compactness of $A$ implies that $f_{\text{ex}}$ is uniformly continuous, then, for $\varepsilon>0$ we can find $\delta>0$ depending only on $\varepsilon$ such that $\norm{x-y}_H<\delta$ implies $\norm{f_{\text{ex}}(x) - f_{\text{ex}}(y)}<\varepsilon$. Lets fix $k\in\mathbb{N}$ such that $\delta_k < \delta$, let $F\colon K\to\mathbb{R}^m$ be a function to be specified later and $x$ any element of $K$, then
	\begin{align*}
		\norm{f(x) - F(x)}\le \norm{f_{\text{ex}}(x) - f_{\text{ex}}(P_{n_k}(x))} + \norm{f_{\text{ex}}(P_{n_k}(x)) - F(x)} < \frac{\varepsilon}{2} + \norm{f_{\text{ex}}(P_{n_k}(x)) - F(x)}.
	\end{align*}
	By the continuity of $\ca{E}_{H,n_k}$ follows that $\ca{E}_{H,n_k}(K)$ is a compact set in $\R^{n_k}$. Consider the function $\bar{f}$ defined by
	\begin{align*}
		\bar{f}\colon \ca{E}_{H,n_k}(K)&\longrightarrow \R^m\\
		y\quad &\longmapsto \bar{f}(y)=f_{\text{ex}}\parent{\sum_{j=1}^{n_k} y_j e_j}.
	\end{align*} 
	Note that the extension is essential because $\ca{E}_{H,n_k}$ could not be a subset of $K$, where $f$ is defined. By the universal approximation Theorem \ref{thm:finite-dim-uat-m} there exists $\theta\in\ca{N}_{\sigma,2,n_k,m}$ such that
	\begin{align*}
		\underset{y\in \ca{E}_{H,n_k}(K)}{\sup}  \norm{\bar{f}(y) - f^{\theta}(y)} &= \underset{y\in \ca{E}_{H,n_k}(K)}{\sup}  \norm{f_{\text{ex}}\parent{\sum_{i=1}^{n_k} y_i e_i} - f^{\theta}(y)}\\
		&= \underset{x\in K}{\sup}  \norm{f_{\text{ex}}\parent{\sum_{i=1}^{n_k} \prom{x,e_i}_H e_i} - f^{\theta}\parent{\parent{\prom{x,e_i}_H}_{i=1}^{n_k}}}\\
		&= \underset{x\in K}{\sup} \norm{f_{\text{ex}}\parent{P_{n_k}(x)} - \parent{\widehat{\ca{E}}_{\R^m,m}\circ f^{\theta}\circ \ca{E}_{H,n_k}}(x) }	
		< \frac{\varepsilon}{2}.
	\end{align*}
	Recall the first point in Remark \ref{remark:DO-def}. It suffices to take $(n_k,\theta,m)\in\ca{N}^{W\to\R^m}_{\sigma,2}$ which concludes the proof.
\end{proof}
%Following proposition follows as an extension of Proposition \ref{prop:functional-sup-uat}.
%\begin{prop}\label{prop:chen-and-chen-rn}
%	Given $m\in\bb{N}$, a compact set $K\subset H$, a continuous function $f:K\to \R^m$ and $\varepsilon>0$, there exists $(\theta,d)\in\ca{N}^{\infty}_{\sigma,2,m}$ such that,
%	\begin{align*}
%		\underset{x\in K}{\sup} \norm{F^{\theta,d}(x)-f(x)}_{\R^m} \le \varepsilon.
%	\end{align*}
%	In other words, $\set{F|_K\colon F\in\ca{R}(\ca{N}^{\infty}_{\sigma,2,m})}$ is dense in $C(K;\R^m)$ endowed with the uniform norm.
%\end{prop}

The main result of this section, concerning the approximation of a square integrable functional is presented below and is closely related to the approximation of a solution to equation \eqref{eq:pde}. We divide the proof in steps for a clear reading and follow the lines of \cite[Theorem $3.1$]{error-estimates-DOnets}. 

\begin{theorem}\label{theorem:infiniteapprox}
	Let $(W,\prom{\cdot,\cdot}_W,\norm{\cdot}_W)$ be a separable Hilbert space with orthonormal basis $(g_i)_{i\in\bb{N}}$. Let $G\colon H\to W$ be a $L^2(H,\mu;W)$ mapping. Then, for any $\varepsilon>0$ there exist a DO $F^{d,\theta,m}:H\to W$ such that,
	\begin{align*}
		\int_{H}\norm{G(x)-F^{d,\theta,m}(x)}_W^2\mu(dx) \le \varepsilon.
	\end{align*} 	
\end{theorem}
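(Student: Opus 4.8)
The plan is to reduce the $L^2$ claim to the uniform-on-compacts approximation of Proposition~\ref{prop:functional-sup-uat}, exploiting tightness of $\mu$ to localise, and then to tame the resulting DeepOnet on the (possibly non-integrable) region $\set{\norm{x}_H\ \text{large}}$ by means of the Clipping Lemma~\ref{lemma:clipping-lemma}.

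First I would carry out three harmless reductions of the target, each controlled in $L^2(H,\mu;W)$. Since $H$ is a separable Hilbert space it is Polish, so $\mu$ is Radon, in particular tight, and $C_b(H;W)$ is dense in $L^2(H,\mu;W)$; hence replace $G$ by a continuous bounded $G_1$ with $\norm{G-G_1}_{L^2(H,\mu;W)}$ small. Writing $\Pi_m$ for the orthogonal projection of $W$ onto $\mathrm{span}\set{g_1,\dots,g_m}$, dominated convergence (using $\norm{\Pi_m G_1(x)}_W\le\norm{G_1(x)}_W$ bounded and $\Pi_m G_1(x)\to G_1(x)$ pointwise) makes $\norm{G_1-\Pi_m G_1}_{L^2(H,\mu;W)}$ small for $m$ large; put $G_2=\Pi_m G_1$, a continuous map with $R_0:=\sup_{x\in H}\norm{G_2(x)}_W<\infty$ and values in $\mathrm{span}\set{g_1,\dots,g_m}$. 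By tightness choose a compact $K\subset H$ with $\mu(H\setminus K)$ as small as desired and, enlarging $K$ if necessary, with $\int_{H\setminus K}\norm{G(x)}_W^2\mu(dx)$ also small (absolute continuity of the integral).

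Since $(g_i)$ is orthonormal, $\widehat{\ca{E}}_{W,m}\colon\R^m\to W$ is a linear isometry onto $\mathrm{span}\set{g_1,\dots,g_m}$, so the coefficient map $x\mapsto(\prom{G_2(x),g_i}_W)_{i=1}^m$ is continuous from $K$ to $\R^m$; Proposition~\ref{prop:functional-sup-uat} then yields $(d,\theta_0,m)\in\ca{N}^{H\to\R^m}_{\sigma,2}$ with $\sup_{x\in K}\norm{G_2(x)-\widehat{\ca{E}}_{W,m}\big(f^{\theta_0}(\ca{E}_{H,d}(x))\big)}_W$ small. The network $f^{\theta_0}$ need not be bounded on $\R^d$, and since $\int_H\norm{x}_H^2\mu(dx)$ may well be infinite the linear growth bound of Lemma~\ref{lemma:nn-bound} gives no control on the tail; dealing with this is the main obstacle, and it is what forces the use of clipping. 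Fix $R_1\ge\sup_{x\in K}\norm{x}_H\ (\ge\sup_{x\in K}\norm{\ca{E}_{H,d}(x)})$ and $R_2>R_1$, and invoke Lemma~\ref{lemma:clipping-lemma} to get $\theta_c\in\ca{N}_{\sigma_{\text{ReLu}},5,d,d}$ with $\norm{f^{\theta_c}(y)-y}<\delta$ on $\set{\norm{y}\le R_1}$ and $\norm{f^{\theta_c}(y)}<R_2$ on all of $\R^d$, where $\delta$ will be fixed below. By Lemma~\ref{lemma:nn-composition} and Definition~\ref{def:param-concatenation}, $\theta:=\theta_0\circ\theta_c\in\ca{N}_{\sigma,7,d,m}$, so $F:=\widehat{\ca{E}}_{W,m}\circ f^{\theta}\circ\ca{E}_{H,d}$ is a bona fide DeepOnet. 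As $f^{\theta_c}$ maps $\R^d$ into the closed ball $\bar B(0,R_2)$, on which the continuous $f^{\theta_0}$ is bounded and uniformly continuous, $F$ is bounded on $H$, say $\sup_{x\in H}\norm{F(x)}_W\le M$; and choosing $\delta$ below the modulus of continuity of $f^{\theta_0}$ on $\bar B(0,R_2)$ forces $\sup_{x\in K}\norm{F(x)-\widehat{\ca{E}}_{W,m}\big(f^{\theta_0}(\ca{E}_{H,d}(x))\big)}_W$ small (since $\ca{E}_{H,d}(x)$ and $f^{\theta_c}(\ca{E}_{H,d}(x))$ then lie at distance $<\delta$ inside $\bar B(0,R_2)$), whence $\sup_{x\in K}\norm{G_2(x)-F(x)}_W$ is small.

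It remains to assemble the estimate. Using $\norm{G-F}_{L^2(H,\mu;W)}\le\norm{G-G_2}_{L^2(H,\mu;W)}+\norm{G_2-F}_{L^2(H,\mu;W)}$ and splitting the last term over $K$ and $H\setminus K$,
\[
\int_H\norm{G_2(x)-F(x)}_W^2\,\mu(dx)\ \le\ \Big(\sup_{x\in K}\norm{G_2(x)-F(x)}_W\Big)^2+2\int_{H\setminus K}\big(\norm{G_2(x)}_W^2+M^2\big)\,\mu(dx),
\]
and the last integral is $\le 2(R_0^2+M^2)\,\mu(H\setminus K)$, which is small by the choice of $K$. Collecting the finitely many small quantities and relabelling $\varepsilon$ completes the argument. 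The only genuinely delicate point is the tail control just described; everything else is the routine chain ``density of continuous functions $+$ finite-rank truncation $+$ uniform approximation on a large compact''.
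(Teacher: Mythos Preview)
Your proof is correct and follows the same overall strategy as the paper: localise to a compact set, approximate uniformly there via Proposition~\ref{prop:functional-sup-uat}, and then clip to recover a globally bounded DeepOnet so that the tail contributes only through $\mu(H\setminus K)$. Two technical choices differ. First, you invoke density of $C_b(H;W)$ in $L^2(H,\mu;W)$ and a global dominated-convergence argument for the finite-rank truncation in $W$, whereas the paper keeps $G$ merely measurable, uses Lusin's theorem to obtain continuity on $K$, and applies Lemma~\ref{lemma:hilbert_aris} to the compact image $G(K)$ to fix the output rank; these are interchangeable. Second, and more interestingly, you clip on the \emph{input} side (compose $f^{\theta_c}$ before $f^{\theta_0}$, forcing the argument of $f^{\theta_0}$ into a fixed ball and then using uniform continuity of $f^{\theta_0}$ there), while the paper clips on the \emph{output} side (compose $f^{\theta_2}$ after $f^{\theta_1}$, so that the bound $\norm{F}_W\le 2M$ comes directly from the Clipping Lemma). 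The paper's route gives an explicit a priori bound on $\norm{F}_W$ independent of $f^{\theta_0}$; your route gives a bound $M=\sup_{\bar B(0,R_2)}\norm{f^{\theta_0}}$ that depends on the approximating network, which is harmless here since only boundedness is needed. Both land in $\ca{N}_{\sigma,7,d,m}$.
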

\begin{proof}
	\textbf{Step 1.} Let $\varepsilon>0$ and define $\delta = \sqrt{\varepsilon / 8}$. First we prove that without loss of generality we can assume that $G$ is bounded. Consider $M>0$ and 
	\begin{align*}
		G_M (x) :=
		\begin{dcases}
			G(x),\ &\norm{G(x)}_W\le M\\
			M\frac{G(x)}{\norm{G(x)}_W},\ &\sim
		\end{dcases}
	\end{align*}
	Then, for any function $F\colon H\to W$ we get,
	\begin{align*}
		\norm{G-F}_{L^2(H,\mu;W)}\le\norm{G-G_M}_{L^2(H,\mu;W)} + \norm{G_M-F}_{L^2(H,\mu;W)}. 
	\end{align*}
	We have that $\norm{G_M - G}^2_W\to 0$ and $\norm{G_M - G}^2_W\le 4\norm{G}^2_W$ $\mu$-a.e., so applying dominate convergence theorem we take $M$ such that,
	\begin{align*}
		\norm{G-F}_{L^2(H,\mu;W)}\le\delta + \norm{G_M-F}_{L^2(H,\mu;W)}. 
	\end{align*}
	Then, assuming $\norm{G}_W\le M$ on $H$, we prove that $\norm{G-F}_{L^2(H,\mu;W)}<\delta$ for certain DeepOnet $F$.\\
	
	\textbf{Step 2.} By Lusin's (\cite{bogachev2}) theorem, there exists a compact set $K=K(\delta,M)\subset H$ such that $G|_{K}$ is continuous and $\mu\parent{H\setminus K}< \frac{\delta^2}{M^2}$. Now, consider the compact set $K'=G(K)\subset W$. In virtue of Lemma \ref{lemma:hilbert_aris}, there exist $\kappa=\kappa(K')\in\bb{N}$ such that,
	\begin{align*}
		\underset{w\in K'}{\sup} \norm{w - P_{\kappa}(w)}_W \le \delta.
	\end{align*}
	Let $\widetilde{G}=P_{\kappa}\circ G\colon K\to W$. Note that,\\
	\begin{align*}
		\underset{x\in K}{\sup}\norm{G(x) - \widetilde{G}(x)}_W = \underset{w\in K'}{\sup}\norm{w - P_{\kappa}(w)}_W \le \delta.
	\end{align*}
	
	\textbf{Step 3.} Applying Proposition \ref{prop:functional-sup-uat} for the continuous function $\ca{E}_{W,\kappa}\circ\widetilde{G}\colon K\to\R^{\kappa}$, we can take $(d,\theta_1,\kappa)\in\ca{N}^{H\to\R^{\kappa}}_{\sigma,2}$ such that,
	\begin{align*}
		\underset{x\in K}{\sup} \norm{ F^{H,d,\theta_1,\kappa,\R^{\kappa}}(x) - (\ca{E}_{W,\kappa}\circ \widetilde{G})(x)} < \delta.
	\end{align*}
	Take any $x\in K$ and the DO generated by $(H,d,\theta_1,\kappa,W)$,
	\begin{align}\label{eq:bound1_uat}
		\norm{F^{(H,d,\theta_1,\kappa,W)}(x) - \widetilde{G}(x)}_W &= \norm{(\widehat{\ca{E}}_{W,\kappa}\circ f^{\theta}\circ \ca{E}_{H,d})(x) - \widetilde{G}(x)}_W\nonumber\\
		& = \norm{\sum_{i=1}^{\kappa} (f^{\theta}\circ \ca{E}_{H,d})(x)_i g_i - \sum_{i=1}^{\kappa} \prom{G(x),g_i}_W g_i }_W\nonumber\\
		&= \norm{(f^{\theta}\circ \ca{E}_{H,d})(x) - \parent{\prom{G(x),g_i}_W}_{i=1}^{\kappa}}_{\R^{\kappa}}\nonumber\\
		&= \norm{F^{H,d,\theta_1,\kappa,\R^{\kappa}}(x) - (\ca{E}_{H,\kappa}\circ \widetilde{G})(x)}_{\R^{\kappa}} < \delta.
	\end{align}
	Then, by using previous estimate, Lemma \ref{lemma:hilbert_aris} and that $G$ is bounded, one has the following bound
	\begin{align*}
		\norm{F^{H,d,\theta_1,\kappa,W}(x)}_W \le \norm{F^{H,d,\theta_1,\kappa,W}(x) - \widetilde{G}(x)}_W + \norm{\widetilde{G}(x) - G(x)}_W + \norm{G(x)}_W < 2\delta + M.
	\end{align*}
	\textbf{Step 4.}
	Applying the clipping Lemma \ref{lemma:clipping-lemma} with $\delta$, $\kappa$, $R_1=M+2\delta$ and $R_2=2M$, note that we can assume $\delta$ small enough such that $R_1<R_2$, we can take $\theta_2\in\ca{N}_{\sigma,5,\kappa,\kappa}$ such that,
	\begin{align}\label{eq:clipping-gamma}
		\begin{dcases}
			\norm{f^{\theta_2}(x) - x} < \delta,\ &\norm{x} < M + 2\delta\\
			\norm{f^{\theta_2}(x)} \le 2M,\ &\forall x\in \bb{R}^{\kappa}.
		\end{dcases}
	\end{align}
	Recall that the norm used in previous equation is the usual norm in $\R^{\kappa}$ and that during this entire section, $\sigma=\sigma_{\text{ReLu}}$. Consider the following composition and its equivalences,
	\begin{align*}
		\widehat{\ca{E}}_{W,\kappa}\circ f^{\theta_2}\circ \widehat{\ca{E}}_{\R^{\kappa}}\circ f^{\theta_1}\circ \ca{E}_{H,d} = \widehat{\ca{E}}_{W,\kappa}\circ f^{\theta_2\circ\theta_1}\circ \ca{E}_{H,d}=F^{H,d,\theta_1\circ\theta_2,\kappa,W}.
	\end{align*}
	Where we made use of Definition \ref{def:param-concatenation}. Such DO satisfies the following,
	\begin{align*}
		\norm{F^{H,d,\theta_2\circ\theta_1,\kappa,W}(x) - \widetilde{G}(x)}_W &\le \norm{F^{H,d,\theta_2\circ\theta_1,\kappa,W}(x) - F^{H,d,\theta_1,\kappa,W}(x)}_W + \norm{F^{H,d,\theta_1,\kappa,W}(x) - \widetilde{G}(x)}_W\\
		&\le \norm{\sum_{i=1}^{\kappa}f^{\theta_2}_i\parent{f^{\theta_1}\parent{\ca{E}_{H,d}(x)}} g_i -  \sum_{i=1}^{\kappa} \parent{f^{\theta_1}\circ \ca{E}_{H,d}}_i (x) g_i}_W + \delta\\
		&\le\norm{f^{\theta_2}\parent{f^{\theta_1}\parent{\ca{E}_{H,d}(x)}} - f^{\theta_1}\parent{\ca{E}_{H,d}(x)}}_{\R^{\kappa}} + \delta < 2\delta,
	\end{align*}
	where we used estimates \eqref{eq:bound1_uat} and \eqref{eq:clipping-gamma}.\\
	
	\textbf{Step 5.} Now we use all previous bounds, let $F=F^{H,d,\theta_2\circ\theta_1,\kappa,W}$ with $(d,\theta_2\circ\theta_1,\kappa)\in\set{d}\times\ca{N}_{\sigma,7,d,\kappa}\times\set{\kappa}$, then 
	\begin{align*}
		\int_H \norm{G(x) - F(x)}_W^2 \mu (dx) &= \int_{H\setminus K} \norm{G(x) - F(x)}_W^2 \mu (dx) + \int_{K} \norm{G(x) - F(x)}_W^2 \mu (dx)\\
		&\le 2\int_{H\setminus K} \norm{G(x)}_W^2\mu(dx) + 2\int_{H\setminus K}\norm{F(x)}^2_W\mu(dx) \\
		&~{} \quad  + 2\int_{K} \norm{G(x) - \widetilde{G}(x)}_W^2\mu(dx) + 2\int_{K} \norm{\widetilde{G}(x) -  F(x)}_W^2\mu (dx)\\
		&\le \mu\parent{H\setminus K}(2M^2 + 2M^2) + 2\delta^2 + 2\delta^2 \le 8\delta^2=\varepsilon,
	\end{align*} 
	which is the desired conclusion.
\end{proof}
Note that the theorem above only contribute with the existence of a parameter $(d,\theta,m)$ such that the generated DO is a good approximation, in order to overcome the said \textit{curse of dimensionality} we may have to provide proper bounds on the size of $(d,\theta,m)$. Following lemma provides us with a useful bound for DeepOnets.

\begin{remark}\label{rem_5p4}
	Recall the notation from Step $5$ from the proof above. Given the parameters $(d,\theta_2\circ\theta_1,\kappa)\in\set{d}\times\ca{N}_{\sigma,7,d,\kappa}\times\set{\kappa}$, we have that $\theta_2\circ\theta_1\in\R^{\eta}$ for some $\eta\in\bb{N}$; therefore
	\begin{align}\label{eq:remark-bound}
		\underset{(p,\theta,q)\in\bb{N}\times\R^{\eta}\times\bb{N}}{\inf}\int_H \norm{G(x) - F^{p,\theta,q}(x)}_W^2 \mu (dx)\le \int_H \norm{G(x) - F^{d,\theta_2\circ\theta_1,\kappa}(x)}_W^2 \mu (dx)\le \varepsilon.
	\end{align}
	This observation allows us to state that for any $\varepsilon>0$ we can find a sufficiently large $\eta\in\bb{N}$ such that the left side of \eqref{eq:remark-bound} is bounded by $\varepsilon$.
\end{remark}

\begin{lemma}\label{lemma:DO-bound}
	Let $p\ge 2$ and $(d,\theta,m)\in\ca{N}^{H\to W}_{\sigma,2}$, then there exists $c_1,c_2>0$ such that $|F^{\theta,d}(x)|^p\le c_1\norm{x}^p_H + c_2$ for every $x\in H$.
\end{lemma}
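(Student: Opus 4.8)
The plan is to unwind the definition of the DeepOnet $F^{d,\theta,m} = \widehat{\ca{E}}_{W,m}\circ f^{\theta}\circ \ca{E}_{H,d}$ as a composition of three maps and track how each one affects norms. First I would note that the encoder $\ca{E}_{H,d}\colon H\to\Rd$ is a contraction in the sense that $\norm{\ca{E}_{H,d}(x)}_{\Rd}^2 = \sum_{i=1}^d |\prom{x,e_i}_H|^2 \le \norm{x}_H^2$, by Bessel's inequality (this is exactly the computation already carried out inside the proof of Lemma \ref{lemma:hilbert_aris}). Likewise the decoder $\widehat{\ca{E}}_{W,m}\colon\R^m\to W$ is an isometry onto its image, since $\norm{\widehat{\ca{E}}_{W,m}(a)}_W^2 = \norm{\sum_{i=1}^m a_i g_i}_W^2 = \sum_{i=1}^m a_i^2 = \norm{a}_{\R^m}^2$ because $(g_i)$ is orthonormal. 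So both outer maps are norm non-increasing (in fact the decoder preserves norms), and the only source of growth is the finite dimensional neural network $f^{\theta}$ in the middle.

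Next I would invoke Lemma \ref{lemma:nn-bound}: since $(d,\theta,m)\in\ca{N}^{H\to W}_{\sigma,2}$, the parameter $\theta$ lies in $\ca{N}_{\sigma,2,d,m}$, so $\theta=\set{W_1,b_1,W_2,b_2}$ and Lemma \ref{lemma:nn-bound} (together with the remark immediately following it, which upgrades the quadratic bound to a $p$-th power bound for $p\ge 2$) gives constants $c_1',c_2'>0$, depending on $\theta$, with $\norm{f^{\theta}(y)}^p \le c_1'\norm{y}^p + c_2'$ for all $y\in\Rd$. One caveat: Lemma \ref{lemma:nn-bound} assumes $|\sigma(x)|\le|x|$, which holds for $\sigma=\sigma_{\text{ReLu}}$ since $\max(x,0)\le |x|$, and indeed the whole section has fixed $\sigma=\sigma_{\text{ReLu}}$, so this hypothesis is met.

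Finally I would chain the three estimates: for $x\in H$,
\begin{align*}
	\norm{F^{d,\theta,m}(x)}_W^p = \norm{f^{\theta}(\ca{E}_{H,d}(x))}_{\R^m}^p \le c_1'\norm{\ca{E}_{H,d}(x)}_{\Rd}^p + c_2' \le c_1'\norm{x}_H^p + c_2',
\end{align*}
so the claim holds with $c_1 = c_1'$ and $c_2 = c_2'$. There is no real obstacle here; the statement is a bookkeeping consequence of the earlier lemmas, and the only points requiring a word of care are (i) checking the ReLU activation satisfies the growth hypothesis of Lemma \ref{lemma:nn-bound}, and (ii) observing that the encoder and decoder do not increase the relevant norms — the encoder by Bessel's inequality, the decoder by orthonormality of $(g_i)$. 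I would present these three observations in that order and then combine them in the single displayed chain above.
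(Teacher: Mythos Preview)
Your proof is correct and follows essentially the same approach as the paper: invoke Lemma \ref{lemma:nn-bound} for the inner finite-dimensional network and observe that the encoder and decoder do not increase norms. The paper's own proof is in fact truncated in the source (only the final constant definitions $c_1=2^{(p-2)/2}a_1^{p/2}$, $c_2=2^{(p-2)/2}a_2^{p/2}$ survive), but the intended argument is visibly the same as yours, with your version being more explicit about the encoder/decoder steps.
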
 

\begin{proof}
	Let $x\in H$, then by using Lemma \ref{lemma:nn-bound} there exists $a_1,a_2>0$ such that,
	
	Defining $c_1 = 2^{\frac{p-2}{2}}a_1^{p/2}$ and $c_2 = 2^{\frac{p-2}{2}}a_2^{p/2}$ concludes the proof.
\end{proof}

\section{Main Result}\label{sec:Main Result}

Now we are ready to state and prove the main result of this paper. Recall the properties of approximators in Subsection \ref{sec:numerical-scheme}.

\begin{theorem}\label{MT1}
	Under Assumptions \ref{assumptions}, \ref{assumptions3} and \ref{assumption:stochastic-sol-regularity}, there exists a constant $C>0$ independent of the partition such that for sufficiently small $h$,
	\begin{align}
		&\underset{i=0,...,N-1}{\max}\esp{Y_{t_i}-\hat{u}_{i}(\xscheme{i})}^2 + \sum_{i=0}^{N-1}\bb{E}\parent{\integral \norm{Z_t-\hat{z}_{i}(\xscheme{i})}_0^2  dt}\label{eq:MT-eq}\\
		&~{} \qquad \qquad\le
		C\Big[ h +\esp{\phi(X_T)-\phi(X_T^\pi)}^2+ N\varepsilon^{v,\eta} + \varepsilon^{z,\eta} +
		\rho(h)  \Big],
		\nonumber
	\end{align}
	with $\varepsilon^{v,\eta}$, $\varepsilon^{z,\eta}$ given in \eqref{errores_epsilons}.
\end{theorem}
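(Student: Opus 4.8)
The plan is to transpose the backward-induction argument of \cite{DBS} to the Hilbert-valued setting; the infinite-dimensional ingredients (It\^o's formula in Hilbert spaces, the martingale representation Theorem~\ref{lemma:mg-representation}, Lemmas~\ref{lemma:n-brownian-integral}--\ref{lemma:z-gorro}) are already in place. Write $A_i:=\esp{Y_{t_i}-\ug{i}(\xscheme{i})}^2$ and $\delta\ca{V}_i:=Y_{t_i}-\vg{t_i}$. The first move is the decomposition $Y_{t_i}-\ug{i}(\xscheme{i})=\delta\ca{V}_i+\big(\vg{t_i}-\ug{i}(\xscheme{i})\big)$ followed by Young's inequality \emph{with weight $h$},
\begin{align*}
	A_i\le(1+h)\,\bb{E}|\delta\ca{V}_i|^2+(1+h^{-1})\,\bb{E}\big|\vg{t_i}-\ug{i}(\xscheme{i})\big|^2;
\end{align*}
keeping the weight $h$ (rather than splitting with a constant) is exactly what will turn the per-step network error into the factor $N$ in front of $\varepsilon^{v,\eta}$ in \eqref{eq:MT-eq}, while leaving $\varepsilon^{z,\eta}$ unamplified.

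\emph{Step 1 — what the minimisation gives.} Using Lemma~\ref{lemma:z-gorro} to write $\ug{i+1}(\xscheme{i+1})=\bb{E}_i\ug{i+1}(\xscheme{i+1})+\integral\prom{\widehat Z_s,\cdot}_0\,dW_s$ together with \eqref{eq:nu-gorro}--\eqref{eq:z-gorro-barra}, I would expand the cost and use It\^o's isometry ($\ca{F}_{t_i}$-orthogonality of the stochastic increment) and the Pythagoras identity for the conditional projection $\gbt{Z}{i}$ to get
\begin{align*}
	L_i(\theta)=\bb{E}\Big|\vg{t_i}-u_i^\theta(\xscheme{i})+\big(\psi(t_i,\xscheme{i},u_i^\theta(\xscheme{i}),z_i^\theta(\xscheme{i}))-\psi(t_i,\xscheme{i},\vg{t_i},\gbt{Z}{i})\big)h\Big|^2+h\,\bb{E}\norm{\gbt{Z}{i}-z_i^\theta(\xscheme{i})}_0^2+c_i,
\end{align*}
with $c_i:=\bb{E}\integral\norm{\widehat Z_s-\gbt{Z}{i}}_0^2\,ds$ independent of $\theta$. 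Since $\vg{t_i}=v_i(\xscheme{i})$ and $\gbt{Z}{i}$ is a function of $\xscheme{i}$, the density of the approximators, the Lipschitz bound \eqref{lip:psi} and Young's inequality would give, at the minimiser $\theta^\ast$ (which yields $(\ug{i},\hat{z}_i)$),
\begin{align*}
	\bb{E}\big|\vg{t_i}-\ug{i}(\xscheme{i})\big|^2+h\,\bb{E}\norm{\gbt{Z}{i}-\hat{z}_i(\xscheme{i})}_0^2\le C\big(\varepsilon_i^{v,\eta}+h\,\varepsilon_i^{z,\eta}\big),
\end{align*}
with $C=C(K,T)$ independent of $\pi$, all of this for $h$ small.

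\emph{Step 2 — one backward step of the dynamics, then Gr\"onwall.} From \eqref{eq:bpsde} on $[t_i,t_{i+1}]$ one has $Y_{t_{i+1}}=Y_{t_i}-\integral\psi(s,X_s,Y_s,Z_s)\,ds+\integral\prom{Z_s,\cdot}_0\,dW_s$. Subtracting the identity for $\ug{i+1}(\xscheme{i+1})$ from Step~1 and taking $\bb{E}_i|\cdot|^2$, It\^o's isometry produces $\bb{E}\integral\norm{Z_s-\widehat Z_s}_0^2\,ds$, of which a fixed fraction $c>0$ survives after Cauchy--Schwarz against the drift increment; splitting the drift increment through the intermediate value $\psi(t_i,\xscheme{i},Y_{t_i},\overline Z_{t_i})$, estimating the genuinely ``Euler'' part by a local residual $R_i$ — with $\bb{E}|R_i|^2\le Ch\,\bb{E}\integral\big(h+\norm{X_s-\xscheme{i}}_H^2+|Y_s-Y_{t_i}|^2+\norm{Z_s-\overline Z_{t_i}}_0^2\big)ds$ by \eqref{lip:psi} and Jensen — and the remaining part by $Ch\big(|\delta\ca{V}_i|+\norm{\overline Z_{t_i}-\gbt{Z}{i}}_0\big)$, one would obtain, after absorbing the $O(h)$ terms,
\begin{align*}
	\bb{E}|\delta\ca{V}_i|^2+c\,\bb{E}\integral\norm{Z_s-\widehat Z_s}_0^2\,ds\le(1+Ch)\,A_{i+1}+\tfrac Ch\,\bb{E}|R_i|^2.
\end{align*}
Feeding this and Step~1 into the opening decomposition yields a single recursion $A_i+c'\,\bb{E}\integral\norm{Z_s-\widehat Z_s}_0^2\,ds\le(1+Ch)\,A_{i+1}+b_i$ with $b_i:=\tfrac Ch\bb{E}|R_i|^2+\tfrac Ch\varepsilon_i^{v,\eta}+C\varepsilon_i^{z,\eta}$. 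By \eqref{eq:e-def} and Assumption~\ref{assumption:stochastic-sol-regularity}, $\sum_i\tfrac Ch\bb{E}|R_i|^2\le C(h+\rho(h))$, so $\sum_ib_i\le C\big(h+\rho(h)+N\varepsilon^{v,\eta}+\varepsilon^{z,\eta}\big)$ (using $h^{-1}=N/T$), while $A_N=\esp{\phi(X_T)-\phi(X_T^\pi)}^2$. The discrete Gr\"onwall inequality then gives $\max_iA_i\le e^{CT}\big(A_N+\sum_jb_j\big)$, and summing the recursion over $i$ — where the cancellation $h\sum_iA_{i+1}\le T\max_iA_i$ is what prevents an extra factor $N$ — gives $\sum_i\bb{E}\integral\norm{Z_s-\widehat Z_s}_0^2\,ds\le C\big(A_N+\sum_jb_j\big)$. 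Finally, on each $[t_i,t_{i+1})$ decompose $Z_t-\hat{z}_i(\xscheme{i})=(Z_t-\overline Z_{t_i})+(\overline Z_{t_i}-\gbt{Z}{i})+(\gbt{Z}{i}-\hat{z}_i(\xscheme{i}))$ and sum the squared $L^2$-norms: the first piece is $e(Z,(\overline Z_t)_{t\in\pi})\le\rho(h)$, the third is bounded in Step~1, and for the middle one $h\norm{\overline Z_{t_i}-\gbt{Z}{i}}_0^2=h\big\|\tfrac1h\bb{E}_i\integral(Z_s-\widehat Z_s)\,ds\big\|_0^2\le\bb{E}_i\integral\norm{Z_s-\widehat Z_s}_0^2\,ds$ by Jensen, so its sum is controlled by the display just obtained. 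Putting the $Y$-bound and the $Z$-bound together gives \eqref{eq:MT-eq}.

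\emph{The main obstacle.} The delicate point is Step~2: one must arrange the recursion so that the per-step amplification is $1+O(h)$ (otherwise there is an $e^{cN}$ blow-up) \emph{and} simultaneously gains the $Z$-energy $\bb{E}\integral\norm{Z_s-\widehat Z_s}_0^2\,ds$ on the left-hand side, so that the gradient error can be extracted through the telescoping cancellation $h\sum_iA_{i+1}\le T\max_iA_i$ instead of a wasteful $\sum_iA_{i+1}\sim N\max_iA_i$; this is what forces the Young-with-weight-$h$ bookkeeping throughout, and is also the mechanism behind the $N\varepsilon^{v,\eta}$ (but only $\varepsilon^{z,\eta}$) in \eqref{eq:MT-eq}. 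In infinite dimensions the additional work is purely structural: every use of the It\^o isometry, of the martingale representation, and of the conditional projections $\gbt{Z}{i},\overline Z_{t_i}$ has to be carried out for $L_2(V_0,\cdot)$-valued integrands in the $\prom{\cdot,\cdot}_0$ geometry of Section~\ref{sec:pre}, and the path regularity of the strong/mild solution $X$ that one would establish by hand when $H=\R^d$ is here absorbed into the hypothesis $e(X,X^\pi)+e(Y,(Y_t)_{t\in\pi})+e(Z,(\overline Z_t)_{t\in\pi})\le\rho(h)$ of Assumption~\ref{assumption:stochastic-sol-regularity}.
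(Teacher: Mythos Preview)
Your proposal is correct and follows the same backbone as the paper's argument --- the Hilbert-space adaptation of the \cite{DBS} backward induction, pivoting on the auxiliary scheme $(\vg{t_i},\gbt{Z}{i})$ and on the cost-function expansion that yields $\bb{E}|\vg{t_i}-\ug{i}(\xscheme{i})|^2+h\,\bb{E}\norm{\gbt{Z}{i}-\hat z_i(\xscheme{i})}_0^2\le C(\varepsilon_i^{v,\eta}+h\varepsilon_i^{z,\eta})$ (your Step~1 is exactly the paper's Lemma~\ref{lemma:important-lemma}).

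The only genuine organisational difference is in Step~2. The paper works through $\bb{E}_i H_{i+1}$ rather than $H_{i+1}$ itself: it takes conditional expectation of the BSDE to obtain a recursion for $\bb{E}|\delta\ca{V}_i|^2$ in terms of $\bb{E}|\bb{E}_iH_{i+1}|^2$ (Step~1 in the paper), and then recovers the $Z$-energy separately in its Step~4 from the conditional-variance identity
\[
h\,\bb{E}\norm{\overline Z_{t_i}-\gbt{Z}{i}}_V^2\;\le\;C\big(\bb{E}H_{i+1}^2-\bb{E}(\bb{E}_iH_{i+1})^2\big)+Ch\,\bb{E}\integral|\psi(\Theta_s)|^2\,ds,
\]
derived by multiplying the BSDE by $\Delta W_i$ (their display \eqref{eq: 4.11z}). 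You instead keep the martingale increment $M_i=\integral\prom{Z_s-\widehat Z_s,\cdot}_0\,dW_s$ in the picture and use It\^o's isometry directly, so that the $Z$-energy sits on the left-hand side of a single recursion from the outset. Both routes rely on the same mechanism to prevent the drift term from swallowing the $Z$-energy: one chooses the Young parameter $\gamma$ large enough (of the order of the Lipschitz constant of $\psi$) so that the $\frac{C}{\gamma}\bb{E}\integral\norm{Z_s-\gbt{Z}{i}}_0^2\,ds$ contribution, which via your Jensen bound feeds back a multiple of $\bb{E}\integral\norm{Z_s-\widehat Z_s}_0^2\,ds$, is dominated by the fraction already on the left. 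Your phrase ``a fixed fraction $c>0$ survives after Cauchy--Schwarz against the drift increment'' is exactly this, and it is the same absorption the paper performs when it forces $\frac{C}{\gamma}\frac{1+\gamma h}{1-h}\le\frac12$ in its Step~4. Your packaging is slightly more streamlined; the paper's is more modular.
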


\begin{proof}
	{\color{black} \textbf{Step 1:}} Recall $\widehat{\ca{V}}_{t_i}$ introduced in \eqref{eq:nu-gorro}. The purpose of this part is to obtain a suitable bound of the term  $\esp{Y_{t_i}-\widehat{\ca{V}}_{t_i}}^2$ in terms of more tractable terms. We have
	
	\begin{lemma}
		There exists $C>0$ fixed such that for any $0<h<1$ sufficiently small, one has
		\begin{align}\label{step1}
			\esp{Y_{t_i}-\widehat{\ca{V}}_{t_i}}^2 \le &~{} Ch^2+C\bb{E}\integral|Y_s-Y_{t_i}|^2ds+ C\bb{E}\integral\norm{Z_s-\overline{Z}_{t_i}}_V^2ds +Ch \bb{E}\integral \psi(\Theta_r)^2dr  \nonumber  \\
			&~{} + C(1+Ch)\bb{E} \left| Y_{t_{i+1}}-\ug{i+1} (X^\pi_{t_{i+1}}) \right|^2,
		\end{align}
		with $\Theta_r=(r,X_r,Y_r,Z_r)$.
	\end{lemma}

	The rest of this subsection is devoted to the proof of this result. 
	\begin{proof}

		Subtracting the equation \eqref{eq:bpsde} between $t_i$ and $t_{i+1}$, we obtain
		\begin{align}
			\Delta Y_i =  Y_{t_{i+1}}-Y_{t_i}=-\integral \psi(\Theta_s)ds +\integral\prom{Z_s,\cdot}_0 dW_s.
			\label{eq:resta}
		\end{align}
		Using the definition of $\widehat{\ca{V}}_{t_i}$ in \ref{eq:nu-gorro},
		\[
		\begin{aligned}
			Y_{t_i}-\widehat{\ca{V}}_{t_i}=&~{}  Y_{t_{i+1}} -\Delta Y_i -\widehat{\ca{V}}_{t_i}\\
			=&~{} Y_{t_{i+1}}+\integral [\psi(\Theta_s)-\psi(\widehat{\Theta}_{t_i})]ds-\integral \prom{Z_s,\cdot}_0 dW_s -\bb{E}_i\ug{i+1}(X^\pi_{t_{i+1}}).
		\end{aligned}
		\]
		Here $\widehat{\Theta}_{t_i}=(t_i,X^\pi_{t_i},\widehat{\ca{V}}_{t_i}, \gb{Z}_{t_i})$. Then, by taking $\bb{E}_i$ and using that stochastic integration produces a martingale
		\begin{align*}
			Y_{t_i}-\widehat{\ca{V}}_{t_i}=\bb{E}_i(Y_{t_{i+1}}-\ug{i+1}(X^\pi_{t_{i+1}})) + \bb{E}_i\parent{\integral [\psi(\Theta_s)-\psi(\widehat{\Theta}_{t_i})]ds} = a+b.
		\end{align*}
		Using the classical inequality $(a+b)^2\le (1+\gamma h)a^2+(1+\frac{1}{\gamma h})b^2$ for $\gamma>0$ to be chosen, we get
		\begin{equation}\label{parada1}
			\begin{aligned}
				\bb{E}\barras{Y_{t_i}-\widehat{\ca{V}}_{t_i}}^2\le &~{} (1+\gamma h)
				\bb{E} \left[ \bb{E}_i\parent{Y_{t_{i+1}}-\ug{i+1}(X^\pi_{t_{i+1}})} \right] ^2  \\
				&~{} + \parent{1+\frac{1}{\gamma h}}\bb{E} \left[\bb{E}_i\parent{\integral [\psi(\Theta_s)-\psi(\widehat{\Theta}_{t_i})]ds}\right]^2.
			\end{aligned}
		\end{equation}
		With no lose of generality, as we are seeking for an upper bound, we can replace $[\psi(\Theta_s)-\psi(\widehat{\Theta}_{t_i})]$ by $|\psi(\Theta_s)-\psi(\widehat{\Theta}_{t_i})|$. Also, in the second term, we can drop the $\bb{E}_i$ due to the law of total expectation. The Lipschitz condition on $\psi$ in Assumptions \ref{assumptions} allows us to give an upper bound in terms of the difference between $\Theta_s$ and $\widehat{\Theta}_{t_i}$. Indeed, we have that
		\begin{align*}
			\bb{E} \left[\bb{E}_i\parent{\integral [\psi(\Theta_s)-\psi(\widehat{\Theta}_{t_i})]ds}\right]^2 \le &~{} Ch\left[ h^2+\bb{E}\integral \norm{X_s-X^{\pi}_{t_i}}_H^2ds + \bb{E}\integral |Y_s-\vg{t_i}|^2ds \right.\\
			&\qquad +\left. \bb{E}
			\integral \norm{Z_s-\gb{Z}_{t_i}}_V^2ds \right],
		\end{align*}
		where the Lipschitz constant of $\psi$ was absorbed by $C$. Using now triangle inequality $|Y_s-\vg{t_i}| \leq |Y_s-Y_{t_i}| +|Y_{t_i}-\vg{t_i}|$  and the definition of $e_i$ in (\ref{eq:e-def}), we find
		\begin{align}
			 \bb{E} \left[\bb{E}_i\parent{\integral [\psi(\Theta_s)-\psi(\widehat{\Theta}_{t_i})]ds}\right]^2\le\ & C h\left[  h^2+ e_i(X,\xscheme{i}) + 
			e_i(Y,Y_{t_i}) +h \esp{Y_{t_i}- \vg{t_i}}^2 \right. \nonumber\\
			&\left. \qquad\qquad
			+ \bb{E}\integral \norm{Z_s-\gb{Z}_{t_i}}_V^2 ds \right]. 
		\end{align}
		For the sake of brevity, define now 
		\begin{equation}\label{def_H}
			H_{i}:=Y_{t_{i}}-\ug{i} (X^\pi_{t_{i}}).
		\end{equation}
		Therefore, replacing in \eqref{parada1},
		\begin{align}
			& \esp{Y_{t_i}-\widehat{\ca{V}}_{t_i}}^2 \le \left(1+\gamma h \right)\bb{E}|\bb{E}_i H_{i+1}|^2 + \parent{1+\gamma h} \frac{C}{\gamma} \left[ h^2 + e_i(X,\xscheme{i}) + 
			e_i(Y,Y_{t_i}) +h \esp{Y_{t_i}- \vg{t_i}}^2 \right. \nonumber\\
			&\left. \qquad \qquad  \qquad  \quad \quad
			+ \bb{E}\integral \norm{Z_s-\gb{Z}_{t_i}}_V^2 ds \right].
			\label{eq:upper} 
		\end{align}
		Recall $\overline Z_{t_i}$ introduced in equation (\ref{eq:z-barra}). In order to work with last term in previous equation, we prove the following,
		\begin{align}
			\bb{E}\integral \norm{Z_s-\gb{Z}_{t_i}}_V^2 ds&=\bb{E}\integral\norm{Z_s-\overline{Z}_{t_i}}_V^2 ds+h \bb{E}\norm{\overline{Z}_{t_i}-\gb{Z}_{t_i}}_V^2.
			\label{eq:ortogonal1}
		\end{align}
		Indeed,
		\begin{align*}
			\norm{Z_t-\gb{Z}_{t_i}}_V^2 =&~{}  \norm{ (Z_t-\overline{Z}_{t_i}) + (\overline{Z}_{t_i}-\gb{Z}_{t_i})}_V^2 \\
			=&~{} \norm{Z_t-\overline{Z}_{t_i}}_V^2 + \norm{\overline{Z}_{t_i}-\gb{Z}_{t_i}}_V^2 + 2\prom{Z_t-\overline{Z}_{t_i}, \overline{Z}_{t_i}-\gb{Z}_{t_i}}_V.
		\end{align*}

		It is sufficient to establish that the double product is null when we integrate and take expected valued. Recall that $\overline{Z}_{t_i}$ from (\ref{eq:z-barra})
		is a $\ca{F}_{t_i}$ measurable random variable. Then, by using elementary properties of Bochner integral,  
		\begin{align*}
			\bb{E}\integral \prom{Z_t-\overline{Z}_{t_i}, \overline{Z}_{t_i}-\gb{Z}_{t_i}}_Vdt& = \bb{E}\Big\langle\integral (Z_s-\overline{Z}_{t_i})ds ,\overline{Z}_{t_i}-\gb{Z}_{t_i} \Big\rangle_V \\
			& = h\bb{E}\Big\langle\frac{1}{h}\integral Z_s ds-\overline{Z}_{t_i} ,\overline{Z}_{t_i}-\gb{Z}_{t_i}\Big\rangle_V = 0.
		\end{align*}
		The latter is due to the fact that $\overline{Z}_{t_i}-\gb{Z}_{t_i}\in L^2(\Omega,\ca{F}_{t_i},\p;V)$ and $\frac{1}{h}\integral Z_s ds-\overline{Z}_{t_i}$ is an orthogonal element to $L^2(\Omega,\ca{F}_{t_i},\p;V)\subset L^2(\Omega,\ca{F},\p;V)$. Therefore, equation (\ref{eq:ortogonal1}) is established. By multiplying (\ref{eq:resta}) by $\Delta W_i$ and taking $\bb{E}_i$,
		\begin{align*}
			\bb{E}_i\parent{\Delta W_iY_{t_{i+1}} } + \bb{E}_i\parent{\Delta W_i\integral \psi(\Theta_s)ds } =&~{}  \bb{E}_i\parent{\integral dW_s\integral \prom{Z_s,\cdot}_0 dW_s }\\
			=&~{} \bb{E}_i\integral Z_s ds = h \overline{Z}_{t_i},
		\end{align*}
		where we used the arguments from the proof of Lemma \ref{lemma:z-gorro}. Subtracting $h \gb{Z}_{t_i}=\bb{E}_i(\ug{i+1}(X^{\pi}_{t_{i+1}})\Delta W_i)$ and then noting that $\bb{E}_i(\Delta W_i \bb{E}_i(H_{i+1}))=0$,
		\begin{align*}
			h (\overline{Z}_{t_i}- \gb{Z}_{t_i})= & \bb{E}_i\left[\Delta W_i (Y_{t_{i+1}}-\ug{i+1}(X^{\pi}_{t_{i+1}}))\right] + \bb{E}_i\parent{\Delta W_i\integral \psi(\Theta_s)ds}\\
			=& \bb{E}_i\left[\Delta W_i (H_{i+1} - \bb{E}_i H_{i+1})\right] + \bb{E}_i\parent{\Delta W_i\integral \psi(\Theta_s)ds}
		\end{align*}
		By applying the conditional version of Holder inequality for the first term and its classical form to the second one, follows that
		\begin{align}
			h^2\bb{E}\norm{\overline{Z}_{t_i} -\gbt{Z}{i} }_V^2 &= \bb{E}\norm{\bb{E}_i\Big[ \Delta W_i (H_{i+1} - \bb{E}_i H_{i+1}) \Big] + \bb{E}_i\parent{\Delta W_i \integral \psi(\Theta_s)ds} }_V^2\nonumber\\
			&\le 2\bb{E}\parent{\bb{E}_i\norm{\Delta W_i}^2_V\bb{E}_i[H_{i+1}-\bb{E}H_{i+1}]^2} + 2\bb{E}\parent{\bb{E}_i\norm{\Delta W_i}_V^2\bb{E}_i\Bigg[\integral \psi(\Theta_s)ds\Bigg]^2}\nonumber\\
			&\le  C\text{tr}(Q)\bb{E}\parent{\bb{E}_i H_{i+1}^2 - (\bb{E}_i H_{i+1})^2} + Ch\, \text{tr}(Q)\bb{E}\integral |\psi(\Theta_s)|^2ds;
			\label{eq: 4.11z}
		\end{align}
		Putting all together, 
		\begin{align*}
			\esp{Y_{t_i}-\widehat{\ca{V}}_{t_i}}^2
			&\le \left(1+\gamma h\right)\esp{\bb{E}_i(H_{i+1}) }^2 \\
			&\quad + \parent{1+\gamma h} \frac{C}{\gamma} \Big[  h^2+ e_i(X,\xscheme{i})+e_i(Y,Y_{t_i})+e_i(Z,\overline{Z}_{t_i})+h \bb{E}|Y_{t_i}- \vg{t_i}|^2\\
			&\qquad \qquad\qquad \quad+\text{tr}(Q)\bb{E} H_{i+1}^2 - \text{tr}(Q)\bb{E}|\bb{E}_i H_{i+1}|^2\\
			&\qquad \qquad\qquad \quad + h\text{tr}(Q)\bb{E}\integral |\psi(\Theta_s)|^2ds\Big]
		\end{align*}
		Where we also used that $Z_t,\overline{Z}_{t_i}$ are $V_0$-valued and implies $\norm{Z_t-\overline{Z}_{t_i}}_V^2\le\norm{Q^{1/2}}_{L(Q)}^2\norm{Z_t-\overline{Z}_{t_i}}_0^2$. Let $\gamma=C^2\text{tr}(Q)$ and note that $(1+\gamma h)\frac{C}{\gamma}\le C$ and also $\gamma\le C$, then the above term transform to
		\begin{align*}
			& Ch^2 + Ce_i(X,\xscheme{i}) + Ce_i(Y,Y_{t_i}) + Ce_i(Z,\overline{Z}_{t_i}) \\
			& + Ch\bb{E}|Y_{t_i}-\vg{t_i}|^2 + C(1+Ch)\bb{E}H_{i+1}^2 + Ch\bb{E}\integral |\psi(\Theta_s)|^2 ds.
		\end{align*}
		Now we take $h$ small such that $Ch < 1$ and then
		\begin{align*}
			\esp{Y_{t_i}-\widehat{\ca{V}}_{t_i}}^2 \le &~{} Ch^2 + Ce_i(X,\xscheme{i}) + Ce_i(Y,Y_{t_i}) + Ce_i(Z,\overline{Z}_{t_i}) \\
			&~{} + C(1+Ch)\bb{E}H_{i+1}^2 + Ch\bb{E}\integral |\psi(\Theta_s)|^2 ds.
		\end{align*}
		Finally, by recalling that $H_{i+1} =Y_{t_{i+1}}-\ug{i+1} (X^\pi_{t_{i+1}})$, we have established \eqref{step1}.
	\end{proof}
	
	{\color{black} \textbf{Step 2:}} The term,
	\[
	C(1+Ch)\bb{E} \left| Y_{t_{i+1}}-\ug{i+1} (X^\pi_{t_{i+1}}) \right|^2,
	\]
	 in \eqref{step1} was left without a control in previous step. Here in what follows we provide a control on this term. The purpose of this section is to show the following estimate:
	\begin{lemma}\label{lemma:first-bound}
		There exists a constant $C>0$ such that,
		\begin{align}
			\max_{i\in\set{0,...,N-1}}\esp{Y_{t_i}-\ug{i}(X_{t_i}^\pi)}^2&\le C\Bigg[h+\esp{ \phi(X_T)-\phi(X_T^\pi) }^2N+ \sum_{i=0}^{N-1} \esp{\ug{i}(X_{t_i}^\pi)-\widehat{\ca{V}}_{t_i}}^2 \nonumber\\
			&\qquad ~{} + e(X,X^{\pi}) + e(Y,(Y_t)_{t\in\pi}) + e(Z,(\overline{Z}_t)_{t\in\pi})  \Bigg]. \label{step2}
		\end{align}
	\end{lemma}
	The rest of this section is devoted to the proof of this result.
	
	\begin{proof}[Proof of Lemma \ref{lemma:first-bound}]
		Recall $H_{i+1} =Y_{t_{i+1}}-\ug{i+1} (X^\pi_{t_{i+1}})$. We have that $(a+b)^2\ge(1-h)a^2+(1-\frac{1}{h})b^2$ and
		\begin{align}
			\esp{Y_{t_i}-\widehat{\ca{V}}_{t_i}}^2&=\esp{\parent{Y_{t_i}-\ug{i}(X^\pi_{t_i})} + \parent{\ug{i}(X^\pi_{t_i})-\widehat{\ca{V}}_{t_i}}}^2
			\label{eq:4.13}\\
			&\ge (1-h)\esp{Y_{t_i}-\ug{i}(X^\pi_{t_i})}^2 + \parent{1-\frac{1}{h}}\esp{\ug{i}(X^\pi_{t_i})-\widehat{\ca{V}}_{t_i}}^2.\nonumber
		\end{align}
		Therefore, we have an upper \eqref{step1} and lower \eqref{eq:4.13} bound for $\esp{Y_{t_i}-\widehat{\ca{V}}_{t_i}}^2$. By connecting these bounds, 
		\begin{align*}
			(1-h)\esp{Y_{t_i}-\ug{i}(X^\pi_{t_i})}^2 + \parent{1-\frac{1}{h}}\esp{\ug{i}(X^\pi_{t_i})-\widehat{\ca{V}}_{t_i}}^2 &\ \le Ch^2+Ce_i(X,\xscheme{i}) + Ce_i(Y,Y_{t_i}) + Ce_i(Z,\overline{Z}_{t_i})\\
			&\quad \quad +Ch \bb{E}\integral \psi(\Theta_s)^2 ds+ C(1+Ch)\bb{E}\parent{H_{i+1}^2}.
		\end{align*}
		Using that for sufficiently small $h$ we have $(1-h)^{-1}\le 2\le C$, we get,
		\begin{align*}
			\esp{Y_{t_i}-\ug{i} (\xscheme{i})}^2 &\le CN\esp{\ug{i}(\xscheme{i})-\widehat{\ca{V}}_{t_i}}^2 + Ch^2 + C e_i(X,\xscheme{i}) + C e_i(Y,Y_{t_i})+C e_i(Z,\overline{Z}_{t_i})  \\
			& ~{}+   Ch\bb{E}\integral|\psi(\Theta_s)|^2ds+C\esp{Y_{t_{i+1}}-\ug{i+1}(X^{\pi}_{t_{i+1}}) }^2.
		\end{align*}
		Notice that the expression on time $t_i$ that we want to estimate, appears on the right side on time $t_{i+1}$, we can iterate the bound and get that $\forall$ $i\in\set{0,...,N-1}$
		\begin{align*}
			& \esp{Y_{t_i}-\ug{i} (\xscheme{i})}^2 \\
			&~{} \le CN\sum_{k=i}^{N-1}\esp{\ug{k}  (\xscheme{k}) -\widehat{\ca{V}}_{t_k}}^2 + C(N-i)h^2 + C\sum_{k=i}^{N-1}\left[ e_i(X,\xscheme{i}) + e_i(Y,Y_{t_i}) + e_i(Z,\overline{Z}_{t_i})\right] \\
			&\quad+Ch\sum_{k=i}^{N-1}\bb{E}\int_{t_k}^{t_{k+1}} |\psi(\Theta_s)|^2ds +C\esp{Y_{t_{N}}-\phi(\xscheme{N})}^2\\
			&~{} \leq CN\sum_{k=0}^{N-1}\esp{\ug{k} (X_{t_k}^\pi) -\widehat{\ca{V}}_{t_k}}^2 + CNh^2 + C\left[ e(X,X^{\pi}) + e(Y,(Y_t)_{t\in\pi}) + e(Z,(\overline{Z}_t)_{t\in\pi}) \right] \\
			&~{} \quad + Ch\sum_{k=0}^{N-1}\bb{E}\int_{t_k}^{t_{k+1}}| \psi(\Theta_s)|^2ds +C\esp{Y_{t_{N}}-\phi(\xscheme{N})}^2.
		\end{align*}
		Applying maximum on $i\in\set{0,...,N-1}$ and recalling bound from Lemma (\ref{lemma:f-int-bound}),
		\begin{align}
			\max_{i\in\set{0,...,N-1}}\esp{Y_{t_i}-\ug{i}(X_{t_i}^\pi)}^2&\le C\Bigg[h+\esp{ \phi(X_T)-\phi(X_T^\pi) }^2N \sum_{i=0}^{N-1} \esp{\ug{i}(X_{t_i}^\pi)-\widehat{\ca{V}}_{t_i}}^2 \nonumber\\
			&\qquad + e(X,X^{\pi}) + e(Y,(Y_t)_{t\in\pi}) + e(Z,(\overline{Z}_t)_{t\in\pi})  \Bigg]. \label{step2}
		\end{align}
		This is nothing that \eqref{step2}.
	\end{proof}

	{\color{black} \textbf{Step 3:}} Estimate \eqref{step2} contains some uncontrolled terms on its RHS. Here the purpose is to bound the term
	\[
	\sum_{i=0}^{N-1} \esp{\ug{i}(X_{t_i}^\pi)-\widehat{\ca{V}}_{t_i}}^2,
	\]
	in terms of more tractable terms. In this step we will prove
	\begin{lemma}\label{lemma:important-lemma}
		It holds that,
		\begin{align}
			\esp{\hat{u}_i(\xscheme{i})-\widehat{\ca{V}}_{t_i}}^2 +h \bb{E}\norm{\gb{Z}_{t_i}-\hat{z}_i (\xscheme{i})}_0^2\le C\varepsilon^v_i + Ch\varepsilon^z_i,
			\label{eq:important-bound}
		\end{align}
		with $\varepsilon_i^v$ and $\varepsilon_i^z$ defined in \eqref{errores}.
	\end{lemma}
	
	\begin{proof}
		Fix $i\in\set{0,...,N-1}$. Recall the martingale $(N_t)_{t\in[t_i,t_{i+1}]}$ and take $t=t_{i+1}$,
		\begin{align*}
			\ug{i+1} (X^{\pi}_{t_{i+1}}) = \bb{E}_i \ug{i+1}(X^{\pi}_{t_{i+1}}) + \int_{t_i}^{t_{i+1}} \prom{\widehat{Z}_s,\cdot}_0 dW_s.
		\end{align*}
		Now we replace the definition of $\widehat{\ca{V}}_{t_i}$ (\ref{eq:nu-gorro}),
		\begin{align}
			\ug{i+1} (X^{\pi}_{t_{i+1}}) = \widehat{\ca{V}}_{t_i} - \psi(t_i,X^{\pi}_{t_i},\widehat{\ca{V}}_{t_i}, \gb{Z}_{t_i})h +\int_{t_i}^{t_{i+1}} \prom{\widehat{Z}_s,\cdot}_0 dW_s.
			\label{eq:rep}
		\end{align}
		Now fix a parameter $\theta\in\Theta_{\eta}$ and replace (\ref{eq:rep}) on $L_i(\theta)$:
		\begin{align*}
			L_i(\theta) =&~{} \bb{E}\Big| \widehat{\ca{V}}_{t_i} - u^{\theta}_i(\xscheme{i})
			+\psi(t_i,\xscheme{i},u^{\theta}_i(\xscheme{i}),z^{\theta}_i(\xscheme{i}))h-\psi(t_i,\xscheme{i},\widehat{\ca{V}}_{t_i},\gb{Z}_{t_i})h+ \integral \prom{\widehat{Z}_s-z^{\theta}_i(\xscheme{i}),\cdot}_0 dW_s\Big|^2 
		\end{align*}
		Note that the four first terms are $\ca{F}_{t_i}$-measurable and the  stochastic integral is a martingale difference, therefore
		\begin{align*}
			L_i(\theta) =&~{} \bb{E}\Big| \widehat{\ca{V}}_{t_i} - u^{\theta}_i(\xscheme{i})
			+\psi(t_i,\xscheme{i},u^{\theta}_i(\xscheme{i}),z^{\theta}_i(\xscheme{i}))h-\psi(t_i,\xscheme{i},\widehat{\ca{V}}_{t_i},\gb{Z}_{t_i})h\Big|^2\\
			&\quad 
			+ \bb{E}\integral \norm{\widehat{Z}_s-\gb{Z}_{t_i}}^2_0 ds + h\bb{E}\norm{\gb{Z}_{t_i}-z^{\theta}_i(\xscheme{i})}^2_0.
		\end{align*}
		Where we used Ito isometry and the same argument used on equation (\ref{eq:ortogonal1}). With this decomposition of $L_i(\theta)$, we can easily see the part that depends on $\theta$. Lets work with $\hat{L}_i$ defined as follows,
		\begin{align*}
			\hat{L}_i(\theta) &= \bb{E} \Big| \widehat{\ca{V}}_{t_i}-u_i^{\theta}(\xscheme{i}) + \parent{ \psi(t_i,\xscheme{i},u_i^{\theta}(\xscheme{i}),z_i^{\theta}(\xscheme{i})) - \psi(t_i,\xscheme{i},\widehat{\ca{V}}_{t_i},\gb{Z}_{t_i})}h \Big|^2+ h \bb{E}\norm{\gb{Z}_{t_i}-z_i^{\theta}(\xscheme{i})}_0^2.
		\end{align*}
		Let $\gamma>0$ and use Young inequality and the Lipschitz condition on $\psi$ to find that
		\begin{align*}
			&\bb{E} \Big| \widehat{\ca{V}}_{t_i}-u_i^{\theta}(\xscheme{i}) + \parent{\psi(t_i,\xscheme{i},\widehat{\ca{V}}_{t_i},\gb{Z}_{t_i}) - \psi(t_i,\xscheme{i},u_i^{\theta}(\xscheme{i}),z_i^{\theta}(\xscheme{i})) } \Big|^2\\
			&\le \parent{1+\gamma h}\esp{\widehat{\ca{V}}_{t_i}-u_i^{\theta}(\xscheme{i})}^2+  \parent{1+\frac{1}{\gamma h}}h^2 C\bb{E}\parent{|\widehat{\ca{V}}_{t_i}-u_i^{\theta}(\xscheme{i})|^2+\norm{z_i^{\theta} (\xscheme{i})-\gb{Z}_{t_i}}_0^2}\\
			&\le C\esp{\widehat{\ca{V}}_{t_i}-u_i^{\theta}(\xscheme{i})}^2 + Ch\bb{E}\norm{z_i^{\theta} (\xscheme{i})-\gb{Z}_{t_i}}_0^2.
		\end{align*}
		Therefore, we have an upper bound on $L(\theta)$ for all $\theta\in\Theta_{\eta}$, to find a lower bound, we use $(a+b)^2\ge(1-\gamma h)a^2+\left(1-\frac{1}{\gamma h}\right)b^2\ge(1-\gamma h)a^2-\frac{1}{\gamma h}b^2$ with $\gamma>0$
		\begin{align*}
			\bb{E} \Big| \widehat{\ca{V}}_{t_i}-u_i^{\theta}(\xscheme{i}) + \parent{\psi(t_i,\xscheme{i},\widehat{\ca{V}}_{t_i},\gb{Z}_{t_i}) - \psi(t_i,\xscheme{i},u_i^{\theta}(\xscheme{i}),z_i^{\theta}(\xscheme{i})) } \Big|^2\ge& \parent{1-Ch}\esp{\widehat{\ca{V}}_{t_i}-u_i^{\theta}(\xscheme{i})}^2 \\
			&-\frac{h}{2} \bb{E}\norm{z_i^{\theta} (\xscheme{i})-\gb{Z}_{t_i}}_0^2;
		\end{align*}
		where we used $\gamma = 2C$ in order to force the $\frac{1}{2}$ in the second term of the RHS. Then, connecting these bounds and using that $\forall\theta\in\Theta$ $\hat{L}(\theta^*)\le\hat{L}(\theta)$ yields,
		\begin{align*}
			\parent{1- Ch}\esp{\widehat{\ca{V}}_{t_i}-\hat{u}_i(\xscheme{i})}^2 +\frac{h}{2} \bb{E}\norm{\gb{Z}_{t_i}-\hat{z}_i (\xscheme{i})}_0^2\le C\esp{\widehat{\ca{V}}_{t_i}-u_i^{\theta}(\xscheme{i})}^2 Ch\bb{E}\norm{\gb{Z}_{t_i}-z_i^{\theta} (\xscheme{i})}_0^2.
		\end{align*}
		By taking $h$ small such that $(1-Ch)\ge \frac{1}{2}$ and infimum on the right side with respect to $\theta\in\Theta_{\eta}$ we get \eqref{eq:important-bound},
		\begin{align}
			\esp{\widehat{\ca{V}}_{t_i}-\hat{u}_i(\xscheme{i})}^2 +h \bb{E}\norm{\gb{Z}_{t_i}-\hat{z}_i (\xscheme{i})}_0^2\le C\varepsilon^{v,\eta}_i + Ch\varepsilon^{z,\eta}_i
		\end{align}
		Thus the proof is completed.
	\end{proof}
Previous lemma and steps proves the following.
\begin{lemma}\label{lemma:first-control}
	It holds that,
	\begin{align}
		\max_{i\in\set{0,...,N-1}}\esp{Y_{t_i}-\ug{i}(X_{t_i}^\pi)}^2 + \le& C\Bigg[h+\esp{ \phi(X_T)-\phi(X_T^\pi) }^2+N\varepsilon^{v,\eta} + \varepsilon^{z,\eta}\nonumber\\
		&\quad + e(X,X^{\pi}) + e(Y,(Y_t)_{t\in\pi})
		+ e(Z,(\overline{Z}_t)_{t\in\pi}) \Bigg].
		\label{eq:first-control}
	\end{align}
\end{lemma}
	
	{\color{black} \textbf{Step 4:}} In this step we show the desire bound for the remaining component. 
	\begin{lemma}\label{lemma:second-control}
		It holds that,
		\begin{align}
			\sum_{i=0}^{N-1}\bb{E}\integral \norm{Z_s-\hat{z}_{i}(\xscheme{i})}_0^2  ds\le& C\,\Bigg[h +\esp{ \phi(X_T)-\phi(X_T^\pi) }^2+ N\varepsilon^{v,\eta} + \varepsilon^{z,\eta}\nonumber\\
			&\quad +e(X,X^{\pi}) + e(Y,(Y_t)_{t\in\pi})+ e(Z,(\overline{Z}_t)_{t\in\pi}) \Bigg].
			\label{eq:second-control}
		\end{align}
	\end{lemma}
\begin{proof}
	We will use triangular inequality passing through $\gb{Z}_{t_i}$. Note that the term containing $\norm{\gb{Z}_{t_i} - \hat{z}_i(\xscheme{i})}^2_0$ is well-controlled by Lemma \ref{lemma:important-lemma}. By using (\ref{eq: 4.11z}) with Lemma \ref{lemma:f-int-bound} on (\ref{eq:ortogonal1}), we get
	\begin{align*}
		\bb{E}\integral\norm{Z_s - \gb{Z}_{t_i}}_0^2 ds \le &~{} C\bb{E}\integral \norm{Z_s - \overline{Z}_{t_i}}_0^2 ds +C \bb{E}\parent{\bb{E}_i H_{i+1}^2 - (\bb{E}_i H_{i+1})^2}\\
		&~{} + Ch\bb{E}\integral |\psi(\Theta_s)|^2ds .
	\end{align*}
	which implies, after summing over $i\in\set{0,...N-1}$,
	\begin{align}\label{eq:final}
		\bb{E}\sum_{i=0}^{N-1}\integral \norm{Z_t - \gb{Z}_{t_i}}_0^2 ds\le  C\sum_{i=0}^{N-1}\parent{\bb{E}\parent{H_{i+1}^2} - \esp{\bb{E}_i (H_{i+1})}^2} + Ch + e(Z,(\overline{Z}_t)_{t\in\pi}).
	\end{align}
	The next step is to give a suitable bound for $\bb{E}\parent{H_{i+1}^2} - \esp{\bb{E}_i (H_{i+1})}^2$. Recall from \eqref{def_H} that $H_{i+1} = Y_{t_{i+1}} - \hat{u}_{i+1} (\xscheme{i+1})$, then
	\begin{equation}\label{EHEH}
		\begin{aligned}
			\sum_{i=0}^{N-1}\parent{\bb{E}\parent{H_{i+1}^2} - \esp{\bb{E}_i (H_{i+1})}^2} &= \sum_{i=0}^{N-1}\bb{E}(H_{i+1}^2)-\sum_{i=0}^{N-1}\esp{\bb{E}_i(H_{i+1})}^2\\
			&= \esp{Y_{t_N}-\hat{u}_N(\xscheme{N})} + \sum_{i=0}^{N-2}\bb{E}(H_{i+1}^2) - \sum_{i=0}^{N-1}\esp{\bb{E}_i(H_{i+1})}^2\\
			&\le \esp{\phi(X_T)-\phi(X_T^\pi)}^2 +\bb{E}(H_{0}^2) + \sum_{i=1}^{N-1}\bb{E}(H_{i}^2) - \sum_{i=0}^{N-1}\esp{\bb{E}_i(H_{i+1})}^2\\
			&=  \esp{\phi(X_T)-\phi(X_T^\pi)}^2  +\sum_{i=0}^{N-1} \left( \bb{E}(H_{i}^2) - \esp{\bb{E}_i(H_{i+1})}^2 \right).
		\end{aligned}
	\end{equation}
	From (\ref{eq:4.13}) and (\ref{eq:upper}) we have an lower and upper bound for $\esp{Y_{t_i}-\widehat{\ca{V}}_{t_i}}^2$. Indeed, first one has
	\begin{equation}\label{eq:delta-s4}
		(1-h)\esp{Y_{t_i}-\ug{i}(X^\pi_{t_i})}^2 \leq   \esp{Y_{t_i}-\widehat{\ca{V}}_{t_i}}^2   + \parent{\frac{1}{h}-1}\esp{\ug{i}(X^\pi_{t_i})-\widehat{\ca{V}}_{t_i}}^2.
	\end{equation}
	Then, we have that for all $\gamma>0$
	\begin{align*}
		&\parent{1-h}\, \esp{Y_{t_i}-\hat{u}_i(\xscheme{i})}^2  \\
		&~{}  \le \parent{\frac{1}{h} -1}\esp{\hat{u}_i(\xscheme{i})-\widehat{\ca{V}}_{t_i}}^2 + (1+\gamma h)\esp{\bb{E}_i(H_{i+1})}^2 
		\\
		& \quad +(1+\gamma h)\frac{C}{\gamma}\bigg[\underbrace{h^2+ e_i(X,\xscheme{i}) + e_i(Y,Y_{t_i}) + h\bb{E}|Y_{t_i} - \widehat{\ca{V}}_{t_i}|^2 + \bb{E}\integral \norm{Z_s-\gb{Z}_{t_i}}_0^2 ds}_{B_i}\bigg].
	\end{align*}
	Let us call the expression inside the squared brackets by $B_i$. Subtracting $(1-h) \bb{E}\left|\bb{E}_iH_{i+1}\right|^2$ and dividing by $(1-h)$,
	\[
	\begin{aligned}
		\bb{E}(H_i^2) - \esp{\bb{E}_i(H_{i+1})}^2 \le &~{} \frac{1}{h}\esp{\ug{i}(\xscheme{i}) - \widehat{\ca{V}}_{t_i} }^2 + \parent{\frac{h+ \gamma h}{1-h}}\esp{\bb{E}_i(H_{i+1})}^2 +\frac{C}{\gamma}\frac{(1+ \gamma h)}{(1-h)}B_i.
	\end{aligned}
	\]
	For $\gamma = 3C$ and sufficiently small $h$, we can force,
	\begin{align*}
		\frac{C}{\gamma}\frac{(1+\gamma h)}{(1-h)} \le \frac{1}{2}\qquad \text{and}\qquad \frac{1}{1-h}\le\frac{1}{2}.
	\end{align*}
	Hence,
	\begin{align*}
		\bb{E}(H_i^2)& - \esp{\bb{E}_i(H_{i+1})}^2 \le \frac{1}{h}\esp{\ug{i}(\xscheme{i}) - \widehat{\ca{V}}_{t_i} }^2 + Ch\esp{\bb{E}_i(H_{i+1})}^2
		+\frac{1}{2}B_i.
	\end{align*}
	Finally, note that,
	\begin{align}\label{eq:N-bound}
		\sum_{i=0}^{N-1}\esp{\bb{E}_i(H_{i+1})}^2\le \esp{\phi(X_T)-\phi(X_T^\pi)}^2+N\underset{i=0,...,N-1}{\max} \esp{Y_{t_i}-\hat{u}_i(\xscheme{i})}^2.
	\end{align}
	Coming back to \eqref{EHEH},
	\[
	\begin{aligned}
		\sum_{i=0}^{N-1}\parent{\bb{E}\parent{H_{i+1}^2} - \esp{\bb{E}_i (H_{i+1})}^2}  \leq &~{}  C\esp{\phi(X_T)-\phi(X_T^\pi)}^2  + N \sum_{i=0}^{N-1}\esp{\ug{i}(\xscheme{i}) - \widehat{\ca{V}}_{t_i} }^2  \\
		&~{} + Ch N \underset{i=0,...,N-1}{\max} \esp{Y_{t_i}-\hat{u}_i(\xscheme{i})}^2 +\frac{1}{2} \sum_{i=0}^{N-1} B_i.
	\end{aligned}
	\]
	Therefore, by plugging this bound in (\ref{eq:final}), noting that $|Y_{t_i}-\widehat{\ca{V}}_{t_i}|^2\le 2|Y_{t_i}-\hat{u}_{i}(\xscheme{i}) |^2 + 2|\hat{u}_i(\xscheme{i}) - \widehat{\ca{V}}_{t_i}|^2$ and $hN = 1$, we have,
	\begin{align*}
		& \bb{E}\sum_{i=0}^{N-1}\integral \norm{Z_s - \gb{Z}_{t_i}}_0^2 ds\\
		 &\le  C\bigg[h + \esp{\phi(X_T)-\phi(X_T^\pi)}^2 + N \sum_{i=0}^{N-1}\esp{\hat{u}_{t_i}(\xscheme{i}) - \widehat{\ca{V}}_{t_i}}^2\\
		&\qquad + \underset{i=0,...,N-1}{\max}\esp{Y_{t_i}-\hat{u}_{i}(\xscheme{i})}^2 + e(X,X^{\pi}) + e(Y,(Y_t)_{t\in\pi}) + e(Z,(\overline{Z}_t)_{t\in\pi})\bigg].
	\end{align*}
	Now, use Lemma \ref{lemma:important-lemma} and Lemma \ref{lemma:first-control} to get
	\begin{align*}
		\bb{E}\sum_{i=0}^{N-1}\integral \norm{Z_s - \gb{Z}_{t_i}}_0^2 ds \le & C\bigg[h+\esp{\phi(X_T)-\phi(X_T^\pi)}^2+ N\varepsilon^{v,\eta} + \varepsilon^{z,\eta}\\
		&\quad +e(X,X^{\pi}) + e(Y,(Y_t)_{t\in\pi}) + e(Z,(\overline{Z}_t)_{t\in\pi})\bigg] .
	\end{align*}
	Thus, it has been demonstrated.
\end{proof}
By combining Lemma \ref{lemma:first-control} with Lemma \ref{lemma:second-control} and using Assumptions \ref{assumption:stochastic-sol-regularity}, the proof of Theorem \ref{MT1} is now complete.
\end{proof}

We finish this work with the following closing remark.

\begin{remark}
	Note that if the approximators are DeepOnets, then $\varepsilon^{v,\eta}, \varepsilon^{z,\eta}\to 0$ as $\eta\to\infty$. See Remark \ref{rem_5p4}.
\end{remark}

\subsection{Comments}

\begin{itemize}
	\item A natural generalization of the theory presented here is to consider an infinite dimension framework of the non-local case, i.e., extending our previous article \cite{yo} by replacing $\Rd$ with a separable Hilbert space $H$. The resulting PDE \eqref{eq:pde} would take the form
	\begin{equation}
		\left \{
		\begin{aligned}
			\partial_t u(t,x) + \ca{L}[u](t,x) + \psi \big( t,x,u(t,x),B^* (t,x)\nabla u(t,x),\ca{I}[u](t,x) \big)&=0, && (t,x)\in[0,T]\times H ,\\
			u(T,x) &= \phi(x), && x\in H.
		\end{aligned}
		\right.
		\label{eq:pide}
	\end{equation}
	Where $\ca{L}$ has an additional term of the form
	\begin{align*}
		\int_H \Big[ u(t,x+\beta(x,y)) - u(t,x) -\prom{\nabla u(t,x),\beta(x,y)}_H \Big]\lambda(dy),
	\end{align*}
	and the non-linearity $\psi$ has an extra, real and non-local dependence on the function $u$ given by
	\begin{align*}
		\ca{I}[u](t,x) = \int_{H} \parent{u(t,x+\beta(x,y)) - u(t,x)}\lambda(dy).
	\end{align*} 
	Where $\beta\colon H\times H\to H$ is a proper well-defined function. In the stochastic side, system \eqref{eq:fpsde}-\eqref{eq:bpsde} would have to consider an extra process  $\set{U(t,x)\colon (t,x)\in[0,T]\times H}$ and a Poisson random measure $\mu$ such that the stochastic equations takes the form
	\begin{align*}
		X_t &= x +\int_0^t \parent{AX_s + F(s,X_s)}ds + \int_0^t B(s,X_s)dW_s  + \int_0^t\int_H \beta(X_r,y)\bar{\mu}(dr,dy),\\
		Y_t &= \phi(X_T) + \int_t^T\psi(t,x,u(t,x),B^*(t,x)\nabla u(t,x),\ca{I}[u](t,x))ds - \int_t^T\prom{Z_s,\cdot}_0 dW_s - \int_t^T \int_H U(s,y)\bar{\mu}(ds,dy).
	\end{align*}
	See \cite{albeverio} for details on the theory of non-local SPDEs. It is left for future work to check whether the same techniques in \cite{yo} still holds for \eqref{eq:pide}.
	\item In Section \ref{sec:Universal Approximation Theorems and Deep-H-Onets} we aim to give an understanding of the success of deep learning via universal approximation theorems. Theorem \ref{theorem:infiniteapprox}, which stipulates the density of Deep-H-Onets in the space of squared integrable functions, is the main result of that section and we emphasize that it is key for the proof the first results concerning finite dimensional NNs given in the late eighties by Hornik et al. \cite{HSW89, HSW90, H91}.
	\item We would also like to state that our results does not give an answer to the curse of dimensionality (CoD) on this problem. Note that to avoid the CoD in this case can be defined by asking that the comlpexity of the problem does not grow exponentially on the inverse of the desired accuracy $\varepsilon$. Theorem \ref{MT1} provide bounds on the error committed by the proposed scheme, we do not show the explicit dependence on $\varepsilon$ for the constant $C$ that appears on \eqref{eq:MT-eq}.
\end{itemize}

\addcontentsline{toc}{section}{References}

\end{document}